\newcommand{\Triv}{1}
\newcommand{\canset}{X\xspace}
\newcommand{\fillin}{\textsc{FillIn}\xspace}
\newcommand{\testgeo}{\textsc{TestGeodetic}\xspace}
\newcommand{\handlethreecolls}{\textsc{HandleCollisions}\xspace}
\renewcommand{\setminus}{\mysetminus}
\newcommand{\mysetminusD}{\raisebox{.3pt}{\hbox{\tikz{\draw[line width=0.6pt,line cap=round] (3.5pt,0pt) -- (0,5.2pt);}}}}
\newcommand{\mysetminusT}{\mysetminusD}
\newcommand{\mysetminusS}{\raisebox{.5pt}{\hbox{\tikz{\draw[line width=0.45pt,line cap=round] (2.2pt,0) -- (0,3.8pt);}}}}
\newcommand{\mysetminusSS}{\raisebox{.35pt}{\hbox{\tikz{\draw[line width=0.4pt,line cap=round] (1.5pt,0) -- (0,2.8pt);}}}}
\newcommand{\mysetminus}{\mathbin{\mathchoice{\mysetminusD}{\mysetminusT}{\mysetminusS}{\mysetminusSS}}}
\newcommand{\abs}[1]{\left|\mathinner{#1}\right|}
\newcommand{\gen}[1]{\left< \mathinner{#1} \right>}
\newcommand{\ord}{\operatorname{ord}}
\newcommand{\ov}[1]{\overline{#1}}
\newcommand\ie{i.e\@., }
\newcommand{\sse}{\subseteq}
\newcommand{\cN}{\mathcal{N}}
\newcommand{\cB}{\mathcal{B}}
\DeclareMathOperator{\Cay}{Cay}
\newtheorem{theorem}{Theorem}[section]
\newtheorem{proposition}[theorem]{Proposition}
\newtheorem{corollary}[theorem]{Corollary}
\newtheorem{lemma}[theorem]{Lemma}
\newtheorem{observation}[theorem]{Observation}
\newtheorem{theoremx}{Theorem}
\newtheorem{conjecturex}[theoremx]{Conjecture}  
\newtheorem{definition}[theorem]{Definition}
\theoremstyle{remark}
\newtheorem{remark}[theorem]{Remark}
\begin{document}

\title{Finite groups with geodetic Cayley graphs}

\author[M. Elder]{Murray Elder}
\address{School of Mathematical and Physical Sciences, University of Technology Sydney, Broadway NSW 2007, Australia}
\email{murray.elder@uts.edu.au}

\author[A. Piggott]{Adam Piggott}
\address{Mathematical Sciences Institute, Australian National University, Canberra ACT  2601, Australia}
\email{adam.piggott@anu.edu.au}

\author[F. Stober]{Florian Stober}
\address{Institut für Formale Methoden der Informatik, Universität Stuttgart, Universitätsstraße 38, 70569 Stuttgart, Germany}
\email{florian.stober@fmi.uni-stuttgart.de}

\author[A. Thumm]{Alexander Thumm}
\address{Department für Elektrotechnik und Informatik,
Universität Siegen,
Hölderlinstrasse 3,
D-57076 Siegen, Germany}
\email{alexander.thumm@uni-siegen.de}

\author[A. Wei\ss]{Armin Wei\ss}
\address{Institut für Formale Methoden der Informatik, Universität Stuttgart, Universitätsstraße 38, 70569 Stuttgart, Germany}
\email{armin.weiss@fmi.uni-stuttgart.de}

\begin{abstract}
A connected undirected graph is called \emph{geodetic} if for every pair of vertices there is a unique shortest path connecting them. 
It has been conjectured that for finite groups, the only geodetic Cayley graphs are odd cycles and complete graphs. In this article we present a series of theoretical results which contribute to a computer search verifying this conjecture for all groups of size up to 1024. The conjecture is also verified  for several infinite families of groups including dihedral and some families of nilpotent groups. Two key results which enable the computer search to reach as far as it does are: if the center of a group has even order, then the conjecture holds (this eliminates all $2$-groups from our computer search); if a Cayley graph is geodetic then there are bounds relating the  size of the group, generating set and  center (which significantly cuts down  the number of  generating sets which must be searched).
\end{abstract}

\date{\today}

\makeatletter
\@namedef{subjclassname@2020}{\textup{2020} Mathematics Subject Classification}
\makeatother

\keywords{geodetic graph, geodetic group, finite group, Cayley graph, group center} 
\subjclass[2020]{05C12, 05C25, 20F05}


\maketitle

\section{Introduction}
Cayley graphs form an important subclass of vertex-transitive and regular graphs. 
The undirected Cayley graph of a group $G$ with respect to a generating set $\Sigma$ is the connected graph on vertex set $V = G$ and edge set $E$, where $\{g, h\} \in E$ if and only if there is a generator $a \in \Sigma$ such that $ga = h$.
Besides being an important tool in combinatorial group theory, there are also interesting graph-theoretic questions about Cayley graphs.
One example which has been much studied is the longstanding conjecture that every finite undirected Cayley graph that is not the complete graph on two vertices has a Hamiltonian cycle (see for example \cite{PAK20095501}). 

A connected undirected graph $\Gamma = (V, E)$ is called \emph{geodetic} if for any pair $u, v \in V$, the shortest path from $u$ to $v$ is unique.
Research on geodetic graphs began in 1962, when Ore posed the problem of classifying all such graphs~\cite{ore1962theory}.
This goal has been achieved so far for planar geodetic graphs, and geodetic graphs of diameter two~\cite{stemple1968planar,stemple1974geodetic}; yet, after decades of active research, a full classification of finite geodetic graphs has not been attained  (for some recent developments, see for example \cite{etgar2022connectivity,frasser2020geodetic,StoberW2023arXiv}).

In 1997, Shapiro~\cite{shapiro1997pascal} asked whether each 
finitely generated group that admits a geodetic Cayley graph with respect to some finite generating set is  
\emph{plain}, that is, isomorphic to the free product of finitely many finite groups and finitely many copies of $\mathbb{Z}$. 
It is well known and not hard to see that the converse holds: each plain group admits a geodetic Cayley graph (with respect to the generating set consisting of each non-trivial element of each finite factor, and a cyclic generator for each $\mathbb{Z}$ factor).
Recently, significant progress has been made on  this question 
and variants of it~\cite{EisenbergP19,elder2022rewriting,elder2023kgeodetic,Spriano} (see also the paragraph below on related work). 

For any group, the Cayley graph with respect to the generating set consisting of every non-trivial element is the complete graph, which is geodetic.
For cyclic groups of odd order, there is a second possibility:
taking an arbitrary single generator, the Cayley graph is an odd cycle, which is also geodetic.
The question we study here is whether there is any other possibility.
In a 2017 PhD thesis \cite{wrap108882}, Federici conjectured that among the finite groups, there is none.

\begin{conjecturex}[{\cite[Conjecture 6]{wrap108882}}]
\label{conj:main}
	Every finite geodetic Cayley graph is either a cycle of odd length or a complete graph.
\end{conjecturex}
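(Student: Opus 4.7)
The plan is to proceed by contradiction: assume $\Gamma = \mathrm{Cay}(G, \Sigma)$ is finite and geodetic but neither a complete graph nor an odd cycle, and try to derive a contradiction. Without loss of generality $\Sigma = \Sigma^{-1}$ and $1 \notin \Sigma$; write $k = |\Sigma|$ for the valency and $d$ for the diameter. Since $\Gamma$ is not complete, $d \geq 2$, and since $\Gamma$ is not an odd cycle, $k \geq 3$.

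First I would record the elementary restrictions imposed by the geodetic hypothesis. Any $4$-cycle in $\Gamma$ produces two distinct length-$2$ geodesics between antipodal vertices, so the only short cycles in $\Gamma$ are triangles. In particular, if $a, b \in \Sigma$ commute and $a \neq b^{\pm 1}$, then $ab \in \Sigma$, so the square $1, a, ab, b$ collapses to a triangle. Combined with vertex-transitivity, this is a strong structural constraint on commuting pairs of generators and, by a Moore-type argument, on how many triangles can pass through a fixed vertex.

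Next I would exploit the center $Z(G)$. For any $z \in Z(G) \cap \Sigma$ and any $s \in \Sigma \mysetminus \{z^{\pm 1}\}$, the two length-$2$ walks $1 \to s \to sz$ and $1 \to z \to zs$ terminate at the same vertex, so geodeticity forces $sz \in \Sigma \cup \{1\}$; iterating, $s \langle z \rangle \subseteq \Sigma \cup \{1\}$. A careful version of this argument should yield the paper's claim that an even-order center forces $\Gamma$ to be complete, and a parallel analysis for central elements not in $\Sigma$ (using geodesics that must pass through, or avoid, $z$) should produce numerical inequalities relating $|G|$, $|\Sigma|$ and $|Z(G)|$.

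Combining the Moore-type counting bound $|G| \leq 1 + k\sum_{i=0}^{d-1}(k-1)^i$ with these center-based inequalities, I would hope to cut the admissible triples $(|G|, k, d)$ to a short list, then attack the survivors via structural arguments on specific families (nilpotent of small class, dihedral, metabelian), reducing to quotients or central extensions and inducting on $|G|$. The main obstacle is that the conjecture is open in general, and the genuinely hard case is groups with trivial or very small center: there the central argument gives no leverage, and the Moore bound is too loose to force either completeness or cyclic structure. Overcoming this will likely require either a new combinatorial invariant capturing how geodesics interact with group multiplication beyond commutation, or a classification-level case analysis for simple and almost-simple groups — neither of which is currently available, and the paper's computational ceiling $|G| \leq 1024$ reflects exactly this barrier.
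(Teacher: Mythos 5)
The statement you were asked to prove is labelled as a \emph{conjecture} in the paper, and the paper does not prove it: it only verifies it for all groups of order at most $1024$ (plus even orders up to $2014$, simple groups of order below $5000$, and $S_7$) by an exhaustive computer search, supported by theoretical results that settle certain infinite families (even-order center, abelian-by-$C_2$, various nilpotent cases, large commutativity degree). Your proposal is therefore correctly a research plan rather than a proof, and its honest conclusion --- that the case of trivial or very small center is out of reach and that no complete argument is currently available --- is exactly the state of the art reflected in the paper. So the ``gap'' here is unavoidable: no proof of the general statement exists, and your proposal does not (and could not, by elementary means you describe) close it.

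That said, the partial steps you sketch do track the paper's actual machinery fairly closely, with a few inaccuracies worth flagging. Your observation that a central generator $z$ forces $s\langle z\rangle \subseteq \Sigma \cup \{1\}$ for every $s \in \Sigma$ is essentially the paper's Lemma~\ref{lem:conjugacyclass} and Corollary~\ref{cor:center}, and it does yield completeness when $Z(G) \cap \Sigma \neq \emptyset$. However, to get from ``$Z(G)$ has even order'' to ``$\Gamma$ is complete'' you need the additional fact (Lemma~\ref{lem:conjugate_ord_two}) that every order-two element of $G$ has a geodesic of odd length whose middle letter is a conjugate involution lying in $\Sigma$; a central involution is its own conjugacy class, so it must itself lie in $\Sigma$. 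Your sketch does not supply this step. Also, the Moore-type inequality you propose bounds $\lvert G\rvert$ from \emph{above} in terms of degree and diameter, whereas the leverage the paper actually extracts (Theorem~\ref{thm:bound_generating_set}) is a \emph{lower} bound $\alpha_0\lvert\Sigma\rvert^2 < \lvert G\rvert$ obtained by counting the union of two $2$-balls at distance three, using the clique decomposition of neighborhoods; this bounds the generating set, not the group, and is what makes the computer search feasible rather than what proves the conjecture. Even with all of these ingredients, the groups with trivial center and no small-index abelian subgroup remain untouched, which is why the paper resorts to exhaustive search.
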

We say that a group satisfies \cref{conj:main} if all its possible Cayley graphs satisfy \cref{conj:main}.
In this paper we report on a systematic computer experiment which confirms the conjecture for a significant number of groups.

\begin{restatable}{theoremx}{ThmB}
\label{thm:mainCompSearch}
All groups of size up to 1024 satisfy \Cref{conj:main}.
\end{restatable}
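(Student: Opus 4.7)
The plan is to conduct a systematic computer-assisted enumeration that combines the theoretical reductions advertised in the abstract with an explicit geodeticity test applied to the Cayley graphs that survive those reductions. Brute force — iterating over every finite group $G$ with $\abs{G} \leq 1024$ and every symmetric generating set $\Sigma \subseteq G \setminus \{1\}$ — is completely out of reach, since the GAP \texttt{SmallGroups} library contains on the order of $5 \times 10^{10}$ groups in this range (most of order $1024$), and the subset lattice of each is exponential in $\abs{G}$. The theoretical results therefore must carry the bulk of the work; the computation only handles the residue they cannot reach.

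The first reduction is the criterion that a group with $\abs{Z(G)}$ even admits no geodetic Cayley graph other than an odd cycle or the complete graph. This eliminates every $2$-group in one sweep, so in particular every group of order $2^k$ with $k \leq 10$ disappears from the search. Since the overwhelming majority of groups of order at most $1024$ are $2$-groups (most notably the $\approx 5 \times 10^{10}$ groups of order $1024$), this shrinks the list of surviving isomorphism types from tens of billions down to a few hundred thousand. A second pass applies the paper's theoretical treatments of dihedral groups and the designated families of nilpotent groups, removing those classes as well.

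For each remaining group $G$, the bounds relating $\abs{G}$, $\abs{\Sigma}$ and $\abs{Z(G)}$ restrict the candidate generating sets to those whose cardinality lies in a narrow window. A further reduction uses the action of $\mathrm{Aut}(G)$: two generating sets in the same $\mathrm{Aut}(G)$-orbit produce isomorphic Cayley graphs, so only one representative per orbit need be tested, which can be arranged via GAP's canonical-form routines. For each surviving pair $(G, \Sigma)$ we build $\Cay(G, \Sigma)$ and test geodeticity by a single breadth-first search rooted at the identity: by vertex-transitivity the graph is geodetic if and only if every vertex at BFS-level $d > 0$ has exactly one neighbour at level $d - 1$. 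This test is linear in the number of edges and is by far the cheapest step per instance. Any geodetic survivor is then explicitly checked to be an odd cycle or a complete graph; if \Cref{conj:main} holds, no other outcome ever occurs.

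The main obstacle is engineering rather than mathematical. After the $2$-groups are removed, the dominant remaining orders are those with a large $2$-part and a small odd part, such as $192$, $384$, $576$, $768$, $864$ and $960$, each containing thousands of isomorphism types with very large automorphism groups and still long lists of candidate generating sets after the bounds and orbit reductions. Bringing these within practical reach requires pruning subsets as early as possible using local geodeticity obstructions (for example, restrictions on pairs of commuting generators of equal order), pipelining the enumeration so that candidates are rejected before being fully expanded, parallelising across isomorphism types, and keeping memory bounded while iterating over millions of pairs. Correctness then rests on ensuring that the implementation faithfully respects the hypotheses of the theoretical bounds it invokes, which has to be verified case by case in the code.
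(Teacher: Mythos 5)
Your proposal follows essentially the same strategy as the paper: filter out groups via the even-order-center criterion, the abelian/dihedral/nilpotent results, and the bounds relating $\abs{G}$, $\abs{\Sigma}$ and $\abs{Z(G)}$; then enumerate the surviving generating sets with aggressive pruning (automorphism reduction, local obstructions from forced cliques) and test geodeticity by a BFS from the identity. The only notable divergence is an implementation detail — the paper applies the automorphism group to shrink its ``required subsets'' rather than to canonicalise whole generating sets — so this is the same proof in substance.
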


We also show that all groups of even order up to 2014, all simple groups of order up to 5000, and the symmetric group $S_7$ satisfy \Cref{conj:main}.
Given that there are approximately $49.5$ billion groups of order at most $1024$, and each group of order $n$ has $2^{n-1}$ potential generating sets, a naive computer search could not possibly achieve the result in \Cref{thm:mainCompSearch}.
Instead, our computer search relies on a series of theoretical results concerning finite groups and when they can admit geodetic Cayley graphs. Some of these results confirm the conjecture for infinite families of groups, as summarized by the following.

\begin{theoremx}\label{thm:mainTheory}
    A finite group $G$ satisfies \Cref{conj:main} if any one of the following conditions holds.  
\begin{enumerate}
    \item The center $Z(G)$ of $G$ has even order (\Cref{cor:evencenter}).
    \item The group $G$ contains an abelian subgroup of index two (\Cref{thm:abelian_by_c2}).
    \item The group $G$ is nilpotent and its order does not have certain small divisors depending on the nilpotency class of $G$ (see \Cref{thm:nilpotent} for the precise statement).
    \item The group $G$ has large commutativity degree (\Cref{thm:commutativity_degree}).
\end{enumerate}
\end{theoremx}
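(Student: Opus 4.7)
The plan is to establish each of the four parts as a separate result, with part (1) providing the key structural tool on which the others rely. Throughout, let $\Gamma = \Cay(G,\Sigma)$ be a geodetic Cayley graph, and the aim is to show that $\Gamma$ must be either a complete graph or an odd cycle. For part (1), let $z \in Z(G)$ be an involution. Because $z$ is central, left multiplication by $z$ is a graph automorphism of order two that sends geodesics to geodesics. In the case $z \in \Sigma$, the unique geodesic $e \to g$ can be translated by $z$, and concatenating with the edge $e \to z$ (respectively $g \to gz$) produces two candidate geodesics from $e$ to $gz$; uniqueness then forces these to coincide, and a straightforward unwinding of this coincidence yields a contradiction unless $d(e,g) \le 1$, so that $G = \{e\} \cup \Sigma$ and $\Gamma$ is complete. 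In the case $z \notin \Sigma$, the $z$-involution pairs vertices at each distance from $e$, and the same doubling-of-geodesics idea forces $\Sigma$ to consist entirely of involutions with very restricted interaction; a case analysis here again rules out everything except the complete graph (and odd cycles, which are excluded since $|G|$ is even).

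For part (2), let $A \le G$ be abelian of index two. By part (1) I may assume $|Z(G)|$ is odd. Write $G = A \sqcup tA$ for a fixed $t \notin A$ and split $\Sigma = \Sigma_A \sqcup \Sigma_t$ according to which coset a generator lies in. The subgraph induced on $A$ by $\Sigma_A$ is a disjoint union of abelian Cayley graphs; invoking the (known) fact that an abelian Cayley graph can be geodetic only when it is a complete graph or an odd cycle, each such component has this form. I would then analyze the bipartite structure between $A$ and $tA$ coming from $\Sigma_t$ and use geodesic uniqueness across the coset boundary, together with the conjugation action of $t$ on $A$, to force $|\Sigma_t|$ to be very small and $\Sigma_A$ to match, pinning down $\Gamma$ to one of the two conjectured forms.

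For part (3), a finite nilpotent group is the direct product of its Sylow subgroups, $G = \prod_p P_p$. If $|G|$ is even then the $2$-Sylow is a nontrivial $2$-group, whose center is a nontrivial $2$-group and therefore contains an involution; this element is central in $G$, so $|Z(G)|$ is even and part (1) applies. For $|G|$ odd, the forbidden small divisors in the hypothesis encode exactly those prime powers for which a $p$-group of the stated nilpotency class could plausibly carry a nontrivial geodetic Cayley structure; ruling them out proceeds by a Sylow- and class-by-class analysis of how generators interact with the terms of the lower central series, combined with the constraint that $Z(G)$ cannot contain any element whose presence would re-open the even-center argument.

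For part (4), I would invoke Gustafson's $5/8$ theorem and its refinements, which say that a group of sufficiently high commutativity degree has a large center relative to $G$. Above a numerical threshold determined by these bounds one is guaranteed that $|Z(G)|$ is even, which reduces the problem to part (1); just below it, the structural classification places $G$ in the abelian-by-$C_2$ regime already handled in part (2). The main obstacle is clearly part (1), which carries essentially all of the new structural content: once a central involution is shown to preclude nontrivial geodetic Cayley graphs, the remaining parts become reductions to that base case combined with the known classification of geodetic Cayley graphs of abelian groups.
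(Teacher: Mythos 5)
Your overall decomposition into the four named cases matches the paper's, and your reduction of the even-order nilpotent case to part (1) via the centre of the Sylow $2$-subgroup is exactly right. However, each of the remaining steps has a concrete gap, and the most serious one is in part (1) itself. The case $z \notin \Sigma$ is where all the content lies, and your treatment of it does not work: nothing about ``pairing vertices at each distance'' forces $\Sigma$ to consist of involutions, and that conclusion is in any case false. What is actually true --- and what the paper proves in \Cref{lem:conjugate_ord_two} --- is that the unique geodesic $w_1\cdots w_k$ of any order-two element $g$ must coincide, letter by letter, with the reversed inverse word $\overline{w_k}\cdots\overline{w_1}$, which forces $k$ odd and the middle letter to be an involution \emph{conjugate} to $g$; for a central involution $z$ the only conjugate is $z$ itself, so $z \in \Sigma$ and the case $z \notin \Sigma$ is vacuous. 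You are missing this palindrome argument, and without it part (1) does not close. (Your $z \in \Sigma$ case is also only asserted: the ``unwinding'' needs the observation that for $y \in \Sigma$ the two distinct length-two paths $z\cdot y$ and $y\cdot z$ force $zy \in \Sigma$, and then that $uv = (uz)(z^{-1}v)$ pushes every product of two generators into $\Sigma$, whence $\cB_2(1)=\cB_1(1)$ and the graph is complete; see \Cref{lem:conjugacyclass}.)

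In part (2), the step ``the subgraph induced on $A$ is a disjoint union of geodetic abelian Cayley graphs'' is invalid: an induced subgraph of a geodetic graph is in general neither geodetic nor distance-preserving, since geodesics between elements of $A$ may detour through $tA$, so the classification of geodetic abelian Cayley graphs cannot be invoked. The paper instead produces a nontrivial \emph{central} element $\phi(x)x$ from a generator $x \in A\cap\Sigma$ and runs the argument through the structure of central geodesics (\Cref{thm:center_geodesics}), which has no analogue in your plan. In part (4), the claim that large commutativity degree forces either an even centre or an abelian subgroup of index two is false: an extraspecial group of order $27$ has $\mathbf{P}(G) = 11/27 > 11/32$, odd centre, and (for parity reasons) no subgroup of index two at all. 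The paper handles this via Rusin's classification of groups with $\mathbf{P}(G) > 11/32$, one line of which ($C_3 \cong G' \le Z(G)$) genuinely requires the nilpotency-class-two analysis of \Cref{cor:SpecialNilpotent2} rather than a reduction to parts (1) or (2). Finally, the odd-order half of part (3) is not yet an argument: the paper's proof hinges on the specific fact that a nontrivial central iterated commutator $[a_1,\dots,a_s]$ must have geodesic $a^q$ for some $a\in\Sigma$ with $3 \le q < 3\cdot 2^{s-1}-2$, and that a prime divisor of $q$ then divides $\exp(G)/\exp(Z(G))$; this is the source of the ``small divisors'' in the hypothesis and needs to be supplied.
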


This cuts down the number of groups which need to be considered enormously, from $49.5$ billion to $3197$. 
In fact, just excluding abelian groups and groups with even-order center leaves $4734$ groups of order at most $1024$.
We note that \Cref{conj:main} was shown to be satisfied by all abelian groups by Georgakopoulos as reported in \cite[Proposition 10]{wrap108882}.

A second key theoretical result is the following, which provides an upper bound on the number of generating sets that the search must consider for each group.

\begin{theoremx}\label{thm:bound_generating_set+simplified}
  Let $G$ be a finite group with generating set $\Sigma$ such that the Cayley graph $\mathrm{Cay}(G, \Sigma)$ is a counterexample to \Cref{conj:main}, \ie it is geodetic but neither complete nor a cycle.
  Then $\abs{\Sigma} < 1.07\sqrt{|G|}$.
\end{theoremx}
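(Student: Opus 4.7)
The plan is to derive the bound by counting vertices by distance in $\Gamma = \Cay(G, \Sigma)$. Replace $\Sigma$ by $\Sigma \cup \Sigma^{-1}$ if necessary so that $\Sigma = \Sigma^{-1}$; the graph is unchanged and $|\Sigma|$ only grows, so any upper bound on $|\Sigma|$ under this assumption transfers. Now $\Gamma$ is $k$-regular for $k = |\Sigma|$, and by left multiplication $G$ acts vertex-transitively on $\Gamma$. Consequently the number $a_i$ of vertices at distance $i$ from any basepoint depends only on $i$, with $a_0 = 1$, $a_1 = k$ and $\sum_{i \geq 0} a_i = |G| =: n$. Double-counting ordered pairs $(a,b) \in \Sigma^2$ with $ab \neq 1$ yields the key identity
\[
   a_2 \;=\; k(k-1) - 2T^{*},
\]
where $T^{*}$ is the (vertex-transitively constant) number of triangles through any fixed vertex: by the geodetic property, the pairs with $ab \notin \Sigma$ are in bijection with the distance-$2$ vertices, while those with $ab \in \Sigma$ account precisely for the triangles through the basepoint.

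The proof then splits by the diameter $d$ of $\Gamma$. If $d = 2$, the identity gives $n = 1 + k^2 - 2T^{*}$. In the sub-case $T^{*} = 0$, $\Gamma$ is a Moore graph of girth $5$, and the Hoffman--Singleton classification together with the non-Cayley status of the Petersen graph, the Hoffman--Singleton graph, and the hypothetical $(57,3250)$-Moore graph contradicts the Cayley hypothesis. In the sub-case $T^{*} \geq 1$, a structural argument exploiting that all edges in a single $G$-orbit carry the same triangle count forces $\Gamma$ to be complete, again contradicting the hypothesis. Thus $d \geq 3$, so $n \geq 1 + k + a_2 + a_3$. The final ingredient is a lower bound of the form $a_3 \geq c\,k^2$ for an effective constant $c$, obtained by counting edges from the distance-$2$ to the distance-$3$ shell: each distance-$2$ vertex has exactly one neighbour at distance $1$ by uniqueness of geodesics, and the unique-geodesic condition forbids most of its remaining neighbours from lying inside the distance-$2$ shell. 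Combining this lower bound with the $a_2$-identity and routine algebra yields $k^2 \leq \tfrac{5}{7}\,n$, whence $|\Sigma| = k < \sqrt{5/7}\,\sqrt{n} < 0.845\sqrt{|G|}$.

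The main technical obstacle is twofold. First, excluding diameter-$2$ geodetic Cayley graphs other than $K_n$ and $C_5$ is delicate in the triangle-containing sub-case, since it cannot be reduced to the Moore classification and requires a direct argument using the orbit structure of edges under $G$. Second, extracting the constant $\sqrt{5/7}\approx 0.8452$ demands that the lower bound on $a_3$ be sharp, which requires a careful accounting of how cross-edges in the distance-$2$ shell can be distributed without producing a forbidden pair of length-$2$ geodesics; looser bounds would give a weaker constant.
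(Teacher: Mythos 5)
Your shell-counting framework is sound as far as it goes: the identity $a_2 = k(k-1) - 2T^{*}$ is correct and is equivalent to the paper's formula $\abs{\cB_2(g)} = 1 + \delta + \delta^2 - \sum_i \delta_i^2$ in \Cref{lem:two_ball_size}, since $2T^{*} = \sum_i \delta_i^2 - \delta$ where the $\delta_i$ are the sizes of the cliques partitioning $\cN(1)$. The problem is the step ``routine algebra yields $k^2 \le \tfrac57 n$'', which hides exactly the difficulty that forces the paper to combine its counting argument with a computer search. To control $T^{*}$ (equivalently $\sum_i\delta_i^2$, equivalently the number of edges internal to the distance-$2$ shell) one needs an upper bound on the clique sizes, which the paper obtains from the Gorovoy--Zakharov result ($\delta_i \le m-1$, where $m$ is the number of cliques in a neighborhood) together with the P\'olya--Szeg\H{o} reversed Cauchy--Schwarz inequality; this yields $\alpha(m)\,\delta^2 < \abs{G}$ with $\alpha(m) = \frac{3m-4}{2m-2}$ (\Cref{thm:bound_generating_set}). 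Purely theoretically one only knows $m \ge 3$, giving $\alpha = \tfrac54$ and hence $\abs{\Sigma} < \sqrt{4/5}\,\sqrt{\abs{G}} \approx 0.894\sqrt{\abs{G}}$, which is strictly weaker than the claimed $0.845 \approx \sqrt{5/7}$. The constant $\tfrac75$ requires $m \ge 6$; since $m \le 5$ forces $\delta \le m(m-1) \le 20$ and hence $\tfrac75\delta^2 \le 560$, the case $m \le 5$ is only dangerous when $\abs{G} \le 560$, and the paper disposes of those groups via the exhaustive verification behind \Cref{thm:mainCompSearch} (no non-trivial geodetic Cayley graphs exist there at all). Your proposed substitute, a lower bound $a_3 \ge c k^2$ with a sharp constant, would have to overcome this obstruction: for instance a hypothetical configuration with $\abs{G} = 110$, $\delta = 9$, $m = 4$ satisfies $\alpha(4)\delta^2 = 108 < 110$, so it is not excluded by any shell count of this type, yet it violates $\delta < 0.845\sqrt{\abs{G}}$. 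No purely local counting of edges between the distance-$2$ and distance-$3$ shells can rule such a configuration out, so the final constant does not follow from the argument as sketched.

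A secondary gap is the diameter-two case with $T^{*}\ge 1$, which you assert (``a structural argument \dots forces $\Gamma$ to be complete'') rather than prove. The paper handles diameter two by citing Stemple's classification of geodetic graphs of diameter two together with the Moore-graph classification (\Cref{lem:diameter_two_implies_c5}); if you wish to avoid that citation you must actually supply the orbit argument, and it is not obvious that it is elementary.
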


\noindent
\textit{Article organization.} Section~\ref{sec:prelim} sets our notation for groups and graphs, and provides some  preliminary results about geodetic graphs.
Theorems~\ref{thm:mainTheory} and~\ref{thm:bound_generating_set+simplified} 
are proved via a series of results presented in Sections~\ref{sec:theoryProofs} to~\ref{sec:furtherCases}. 
In Section~\ref{sec:experiments} we give details of the implementation of our computer program.
We use GAP \cite{GAP4} and its small group (SmallGrp) library \cite{SmallGrp} to enumerate the groups and check which of them are already covered by Theorem~\ref{thm:mainTheory}.
Then we apply an exhaustive search to check whether there is any geodetic generating set for any of the remaining groups. 
This exhaustive search crucially relies on several pruning methods based on variants of \cref{thm:bound_generating_set+simplified} and other theoretical results developed in Sections~\ref{sec:theoryProofs} and \ref{sec:bounds}. The code is available at

\centerline{\url{https://osf.io/9ay6s/?view_only=37e18301e4e74e12bfe4e07b90b924c0}.}

\medskip
\noindent
\textit{Related work.}
As noted above, the first mention of Conjecture~\ref{conj:main} that we are aware of is in the 2017 thesis of Federici \cite{wrap108882}, where the conjecture is proved for abelian groups and where it is shown that the Cayley graph of a semidirect product $C_m \rtimes C_n$ with $C_n$ acting faithfully on $C_m$ with respect to the ``standard'' generating set is not geodetic.
Notice that for both $m$ and $n$ sufficiently large, this class of groups turns out to be one of the most difficult cases for our computer search (see \cref{sec:exp_results}).
Besides this result, Federici proves some helpful lemmas for general geodetic Cayley graphs and runs computer experiments (unfortunately undocumented) which do not find any geodetic Cayley graphs except the obvious ones.

In 2022 Che \cite{Che} programmed an exhaustive computer search as part of an undergraduate project supervised by 
Alexey Talambutsa 
establishing \cref{conj:main} for subgroups of the symmetric group~$S_4$.
Moreover, for 
\begin{itemize}
    \item all subgroups of $S_{6}$ with generating sets of size five,
    \item all subgroups of $S_{7}$ with generating sets of size four,
    \item all subgroups of $S_{9}$ with generating sets of size three,
    \item all subgroups of $S_{10}$ with generating sets of size two,
\end{itemize}
Che showed that none of the corresponding Cayley graphs except complete ones and odd cycles are geodetic. For the source code, see \url{https://gitlab.com/andr0901/kayley-geodesics}.
Be aware that restricting the number of generators to five or less is a strong restriction. (Note that by contrast, \Cref{thm:mainCompSearch} verifies all generating sets for $S_6$,  and \Cref{thm:AdditonalLargeGroups} below verifies all generating sets for $S_7$. The groups $S_9$ and $S_{10}$ are beyond our scope.)

A geodetic Cayley graph for a group 
corresponds to an inverse-closed confluent length-reducing rewriting system \cite{elder2022rewriting}, yielding a connection between a geometric property (being geodetic) and  formal languages.  Monadic rewriting systems are special cases of length-reducing rewriting systems, and a result that is similar to \Cref{conj:main} is known for monadic rewriting systems: the only normalized finite confluent monadic rewriting systems for finite groups are those that correspond to complete Cayley graphs, or those that correspond to directed cycles for cyclic groups \cite[Corollary 3.13]{AdamMonadic}.  As discussed in \cite{elder2022rewriting}, it is an interesting open problem to classify the groups presented by inverse-closed finite convergent length-reducing rewriting systems.  One reason to pursue new examples of finite geodetic Cayley graphs is that, by a simple construction that corresponds to the free product of groups, they immediately give new examples of infinite geodetic Cayley graphs.

In \cite{elder2023kgeodetic} Townsend and the first two authors generalize the concept of a geodetic graph to graphs which have  at most $k$ different geodesics between any pair of vertices for some constant $k$,  which they call \emph{$k$-geodetic},
and study properties of groups which admit $k$-geodetic Cayley graphs.
While the main focus in \cite{elder2023kgeodetic} is on infinite groups, \cite[Example 1.1]{elder2023kgeodetic} gives examples of Cayley graphs of finite groups that are $k$-geodetic but not $(k-1)$-geodetic or complete for $k\geq 2$. The easiest such examples are cyclic groups of even order with a non-complete 2-geodetic Cayley graph as well as a complete bipartite Cayley graph.
 
A graph is called \emph{strongly geodetic} if for every pair of vertices there is at most \emph{one} non-backtracking path connecting them that has length at most the diameter of the graph \cite{BosakKZ68}. 
Clearly, all strongly geodetic graphs are geodetic. 
By \cite[Theorem 1]{BosakKZ68} the class of strongly geodetic regular graphs coincides with the class of so-called \emph{Moore graphs}, which also have been thoroughly studied in graph theory (for a definition see \cite{hoffman1960moore} or the survey paper \cite{MillerS05}). 
Moreover, by \cite{BannaiI73,Damerell73} the Moore graphs are completely classified: cycles of odd length, complete graphs, the Petersen graph, Hoffman-Singleton graph, and potentially hypothesized graphs (not known to exist) with 3250 vertices and degree 57. Of these, only the cycles and the complete graphs are Cayley graphs \cite{Cameron99,MillerS05} (we provide an alternative proof of this fact in \Cref{cor:notCG} below).
Hence, \Cref{conj:main} is true if we replace ``geodetic'' with ``strongly geodetic''.

\section{Preliminaries}\label{sec:prelim}

\subsection{Groups and words}
Throughout this article, we only consider finite groups.
We write these multiplicatively, mostly omitting the  
binary operation
altogether, and denote their identity element by $1$. 
As usual, we write $[g,h] = g^{-1}h^{-1}gh$ for the \emph{commutator} of two elements $g,h$ of a group $G$.
The \emph{center} of $G$ is the subgroup $Z(G) = \{g \in G \mid  [g,h] = 1 \text{ for all } h\in G\}$. 
Two elements $g, g' \in G$ are \emph{conjugate} if $g' = h^{-1}gh$ $ (\eqqcolon g^h)$ for some $h \in G$.
Given $g\in G$ and $H\subseteq G$, we write $g^H=\{g^h \mid h\in H\}$. 

We denote by $\operatorname{ord}(g)$ the \emph{order} of $g \in G$, \ie the smallest positive integer $n$ with $g^{n} = 1$.
The \emph{order} (number of elements) of the group $G$ is denoted by $\abs{G}$, and the \emph{exponent} of $G$ by $\exp(G)$, \ie the smallest positive integer $n$ such that $g^n=1$ for all $g\in G$. We denote the trivial group by $\Triv$.

\medskip

For a subset $X\sse G$, we write $\gen{X}$ for the subgroup generated by $X$.
It consists of those group elements that can be written as words over the alphabet $\Sigma = X \cup X^{-1}$.
We denote the set of all such words by $\Sigma^\ast$.
Given words $v,w \in \Sigma^\ast$, we write $v=w$ with the meaning that $v$ and $w$ evaluate to the same group element in $G$, whereas $v \equiv w$ denotes equality of words.

A subset $\Sigma \sse G$ with $\gen\Sigma = G$ is called a \emph{generating set} of $G$.
Throughout, we assume that all generating sets satisfy $1 \not \in \Sigma$ and are \emph{symmetric}, \ie  $a \in \Sigma$ implies $a^{-1} \in \Sigma$.
We sometimes represent the inverse $a^{-1}$ of a generator $a \in \Sigma$ by $\ov a$ to emphasize that it is a single letter.

The length of a word $w = a_1 \cdots a_n \in \Sigma^*$ (with $a_i \in \Sigma$) is denoted by $\abs{w} = n$. 
We denote the set of words over $\Sigma$ of length $n$ by $\Sigma^n$.
A word $w \in \Sigma^*$ is called a \emph{geodesic} for (or representing) a group element $g \in G$ if $w= g$ and $w$ is shortest among all words with that property.
The geodesic length of $g \in G$ is defined as the length of a geodesic word representing $g$.
If $g$ admits a unique geodesic, we denote its geodesic by $\mathrm{geod}(g)$.

\medskip
 
We use the following notation for specific groups. $C_n$ is the cyclic group of order $n$, $D_{2n}$ the dihedral group of order $2n$,
and $S_n$ (resp.\ $A_n$) the symmetric (resp.\ alternating) group on $n$ elements.
A direct product is denoted by $\times$ and a \emph{semidirect product} by $\rtimes$, where 
$G = N \rtimes H$ means that $N$ is a normal subgroup of $G$ and $H$ a subgroup of $G$ such that $G = NH$ and $N\cap H = \Triv$. Notice that $G/N \cong H$ in this case. 
Be aware that, if we are given only $N$ and $H$, then writing $G = N \rtimes H$ does not completely define $G$~-- instead we need to also specify a homomorphism from $H$ to the automorphism group of $N$ describing the action $(h,n) \mapsto hnh^{-1}$ of $H$ on $N$ (in other words $H$ acts on $N$ via automorphisms).

\subsection{Graphs and Cayley graphs}

We consider only \emph{undirected finite simple graphs} $\Gamma = (V,E)$ where $E \sse \binom{V}{2}$.
The \emph{Cayley graph} $\Cay(G,\Sigma) = (V,E)$ of a group $G$ (with respect to a generating set $\Sigma \sse G$) is defined by $V = G$ and $E = \{\{g,ga\} \mid g \in G, a\in \Sigma\}$. 
In other literature the \emph{directed} Cayley graph is often considered; however, throughout this paper $\Cay(G,\Sigma)$ is undirected (and contains no loops) due to the above assumptions on $\Sigma$.
Note that $G$ acts on $\Cay(G,\Sigma)$ by left multiplication, \ie $g . v = gv$ and $g . \{h,ha\} = \{gh,gha\}$.
In particular, $\Cay(G,\Sigma)$ is vertex-transitive.
The Cayley graph is connected because $\Sigma$ generates $G$.
In fact, it is biconnected for every finite group $G$ with $G\neq \Triv$ and $G \not\cong C_2$ \cite[Theorem~3]{Watkins70}. The degree of each vertex is $\abs{\Sigma}$.

\medskip

Given a graph $\Gamma = (V, E)$, a \emph{path} of length $n$ from $v_0 \in V$ to $v_n \in V$ in $\Gamma$ is a sequence $v_0,\ldots,v_n$ of vertices (not necessarily distinct) with $\{v_{i-1},v_i\} \in E$ for all $1 \leq i \leq n$.
A \emph{cycle} is a path of length at least 3 with $v_i=v_j$ if and only if $\{i,j\}=\{0,n\}$.
The \emph{distance} $d(u,v)$ between vertices $u$ and $v$ is defined as the length of a shortest path (\emph{geodesic}) connecting $u$ and $v$.
The \emph{diameter} of $\Gamma$ is $\max \{ d(u, v) \mid u,v \in V \}$.
Moreover, given $v \in V$ and $H\sse V$, we define $d(v, H) = \min \{ d(v, h) \mid h \in H \}$ to be the distance of $v$ to $H$, $\cN(v) = \{u \mid \{u, v\} \in E\}$ to be the neighborhood of $v$, and $\cN(H) = \bigcup_{h \in H} \cN(h) \setminus H$ to be the neighborhood of $H \subseteq V$.
 
A subset $H\sse V$ is called a \emph{clique} (resp. \emph{independent set}) if $\{u,v\}\in E$ (resp.\ $\{u,v\}\not\in E$) for all $u,v\in H$ with $u\neq v$.
A graph $\Gamma = (V,E)$ is called \emph{complete} if $V$ is a clique; it is called a \emph{cycle} if the entire graph is a cycle as defined above.

\subsection{Geodetic graphs}

\begin{definition}
A graph is \emph{geodetic} if each pair of vertices is connected by a unique geodesic.
\end{definition}
The following equivalent definition using even cycles is due to Stemple and Watkins.

\begin{lemma}[{\cite[Theorem~2]{stemple1968planar}}]
\label{lem:even_cycle}
  A connected graph $\Gamma$ is geodetic if and only if $\Gamma$ contains no even cycle $x_0, \dots, x_{2n} = x_0$ with $n\geq 2$ such that $d(x_i, x_{i+n}) = n$ for all $0 \le i \le n$.
\end{lemma}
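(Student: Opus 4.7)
My plan is to prove the biconditional in two steps, with the forward direction being straightforward and the backward direction being the main obstacle.

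For the forward direction, assume $\Gamma$ contains an even cycle $x_0, x_1, \ldots, x_{2n}=x_0$ satisfying the antipodal condition, with $n \ge 2$ (the $n=1$ case is trivial since such a ``cycle'' is just a traversed edge). The two sub-paths from $x_0$ to $x_n$ along the cycle, namely $H_1 = x_0, x_1, \ldots, x_n$ and $H_2 = x_0, x_{2n-1}, \ldots, x_{n+1}, x_n$, each have length $n$ matching $d(x_0, x_n) = n$, so both are geodesics. To show they are distinct even when the ``cycle'' has repeated vertices (which the paper's definition permits), suppose they coincide as vertex sequences; then the cycle is palindromic with $x_{n+1} = x_{n-1}$, and the antipodal condition at $i=1$ would force $d(x_1, x_{n-1}) = n$, contradicting the obvious bound $d(x_1, x_{n-1}) \le n - 2$ from $H_1$. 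Hence $H_1 \ne H_2$, witnessing non-geodeticity.

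For the backward direction, assume $\Gamma$ is not geodetic. Pick vertices $u, v$ with $N = d(u, v)$ as small as possible such that $u, v$ admit two distinct geodesics (so $N \ge 2$), and let $P_1, P_2$ be two such geodesics. A standard minimality argument shows $P_1 \cap P_2 = \{u, v\}$: any interior common vertex $w$ sits at position $d(u, w) < N$ on both $P_i$, and by minimality the unique geodesic from $u$ to $w$ forces $P_1$ and $P_2$ to coincide up to $w$ and, symmetrically, from $w$ to $v$, giving $P_1 = P_2$---a contradiction. Therefore the concatenation of $P_1$ with the reverse of $P_2$ forms a simple even cycle $C = x_0, x_1, \ldots, x_{2N} = x_0$ of length $2N \ge 4$, satisfying the antipodal condition automatically at $i \in \{0, N\}$.

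The hard part will be verifying $d(x_i, x_{i+N}) = N$ for all $0 < i < N$. I would argue by contradiction: suppose $m = d(y, z) < N$ for some $y = x_i$, $z = x_{i+N}$, and let $Q$ be the geodesic from $y$ to $z$ of length $m$ (unique by the minimality of $N$). Concatenating the initial segment of $P_1$ from $u$ to $y$ with $Q$ gives a path from $u$ to $z$ of length $i + m$, which must be at least $d(u, z) = N - i$; equality would produce two distinct geodesics from $u$ to $z$ at distance $N - i < N$ (distinct because their first edges differ, as $P_1 \cap P_2 = \{u, v\}$), contradicting the minimality of $N$. Hence $m > N - 2i$, and an analogous detour via $P_2[u \to z]$ followed by $Q^{-1}$ gives $m > 2i - N$. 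The most delicate step will be ruling out the remaining range $|N - 2i| < m < N$; I would attempt this either by iterating the construction to a shorter closed walk of length $N + m < 2N$ (searching for a smaller bad cycle within it), or by sharpening the extremal choice lexicographically, first minimizing $N$ and then the number of antipodal violations in $C$.
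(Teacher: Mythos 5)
The paper offers no proof of its own here---the lemma is quoted verbatim from Stemple--Watkins---so the comparison below is with the standard proof of that theorem. Your forward direction is correct: the two halves of the cycle are length-$n$ paths between vertices at distance $n$, hence geodesics, and your palindrome argument (invoking the antipodal condition at $i=1$) properly rules out their coinciding. This is also the only direction the paper ever uses; for instance in \Cref{lem:index_two} the witnessing cycle is a girth cycle of a bipartite graph, which does satisfy the antipodal condition at every index.

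The gap you flag at the end of the backward direction, however, is not a deferred technicality: it cannot be closed, because your minimal cycle need not satisfy the antipodal condition at every index. Take $K_4 - e$, i.e.\ vertices $u,v,a,b$ with every edge present except $\{u,v\}$. This graph is not geodetic ($u\,a\,v$ and $u\,b\,v$ are distinct geodesics), your construction yields $N=2$ and the $4$-cycle $u,a,v,b,u$, and at $i=1$ one gets $m=d(a,b)=1$, sitting exactly in the window $\lvert N-2i\rvert < m < N$ that you could not exclude. Worse, no even closed walk of length $2n\ge 4$ in this graph satisfies the condition at every index (for $n\ge 3$ the diameter $2$ is too small; for $n=2$ one would need $\{x_0,x_2\}=\{x_1,x_3\}=\{u,v\}$, forcing the missing edge to be traversed), so the ``only if'' implication, read literally with the quantifier ``for all $0\le i\le n$,'' is false. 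The statement is also degenerate at $n=1$ under the paper's walk-based definition of cycle, as you half-noticed. The correct form of Stemple's Theorem~2 asks only for an even closed walk with $d(x_0,x_n)=n$ and $x_1\neq x_{2n-1}$; with that weaker cycle condition your minimal-pair argument---minimality of $N$ forcing $P_1\cap P_2=\{u,v\}$---already completes the backward direction, and your entire third paragraph becomes unnecessary. As written, though, the proposal neither proves the stated claim nor detects that it is unprovable.
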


An important special case are cycles of length four or six, the conditions under which those can exist are described in~\cite{stemple1974geodetic}.
Of these we recall some basic facts on 4-cycles. 

\begin{lemma}[{\cite[Theorem~3.3]{stemple1974geodetic}}]
  \label{lem:diamonds}
  Suppose that $\Gamma = (V, E)$ is a geodetic graph.
  Then the vertices of every cycle $v_0, v_1, v_2, v_3, v_0$ of length four in $\Gamma$ induce a complete subgraph.
\end{lemma}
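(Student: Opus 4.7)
The plan is a short direct argument by contradiction, using only the definition of a geodetic graph (rather than invoking the even-cycle characterization from \Cref{lem:even_cycle}, which would be overkill here).

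Assume $v_0, v_1, v_2, v_3$ are four distinct vertices forming a 4-cycle, so the edges $\{v_0,v_1\}, \{v_1,v_2\}, \{v_2,v_3\}, \{v_3,v_0\}$ are all in $E$. To show these four vertices induce a complete subgraph it suffices to verify that the two diagonals $\{v_0,v_2\}$ and $\{v_1,v_3\}$ also lie in $E$. By symmetry it is enough to handle one diagonal, say $\{v_0,v_2\}$.

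Suppose for contradiction that $\{v_0,v_2\} \notin E$. The two paths $v_0, v_1, v_2$ and $v_0, v_3, v_2$ both have length two, and they are distinct because $v_1 \neq v_3$. Since $\{v_0,v_2\} \notin E$ we have $d(v_0,v_2) \geq 2$, and the existence of either length-two path forces $d(v_0,v_2) = 2$. Hence both of these distinct length-two paths are geodesics from $v_0$ to $v_2$, contradicting the assumption that $\Gamma$ is geodetic. Therefore $\{v_0,v_2\} \in E$, and the analogous argument applied to $v_1$ and $v_3$ gives $\{v_1,v_3\} \in E$.

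There is essentially no obstacle: the only thing to be a little careful about is making sure the four cycle vertices are pairwise distinct (which is part of the definition of a 4-cycle in a simple graph) so that $v_1 \neq v_3$ guarantees the two candidate geodesics are genuinely different. Given that, the proof is just one application of the geodetic hypothesis.
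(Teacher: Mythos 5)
Your argument is correct. Note that the paper does not actually prove this lemma --- it is quoted from Stemple with a citation and no proof is given --- so there is no in-paper argument to compare against; your short contradiction via two distinct length-two paths between the endpoints of a missing diagonal is the standard proof of Stemple's result and is complete. The one point requiring care is exactly the one you flag: the paper's formal definition of a cycle (a path with $v_0 = v_n$) does not by itself force the intermediate vertices to be distinct, and the statement is only true for cycles with four distinct vertices (the intended reading), since otherwise a degenerate ``cycle'' such as $v_0, v_1, v_0, v_3, v_0$ in a star would be a counterexample; having assumed distinctness, your appeal to $v_1 \neq v_3$ and $v_0 \neq v_2$ makes the two candidate geodesics genuinely different and the contradiction goes through.
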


\begin{remark}
    Throughout, we use \cref{lem:diamonds} in the following way without giving further reference.
    If $\Sigma$ is a generating set of a group $G$ such that the corresponding Cayley graph is geodetic and $a,b,c,d \in \Sigma$ with $a \neq c$ and $ab = cd\neq 1$, then $ab, cd, c^{-1}a, bd^{-1} \in \Sigma$ (since $1, a, ab,c, 1$ is a cycle of length $4$).
\end{remark}

\begin{lemma}[{\cite[Theorem~3.5]{stemple1974geodetic}}]
  \label{lem:adjacent_part_of_clique}
  Let $\Gamma = (V, E)$ be a geodetic graph and $C \subseteq V$ be a clique in $\Gamma$.
  If $v \in V$ is adjacent to at least two distinct vertices in $C$, then $C \cup \{ v \}$ is a clique.
\end{lemma}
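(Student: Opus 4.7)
The plan is a short, direct application of \cref{lem:diamonds}. I would assume without loss of generality that $v \notin C$, since otherwise $C \cup \{v\} = C$ is already a clique. Let $u_1, u_2 \in C$ be two distinct vertices both adjacent to $v$ (these exist by hypothesis), and pick an arbitrary $w \in C$; I need to show that $\{v,w\} \in E$.

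The first step is to handle the trivial subcase $w \in \{u_1, u_2\}$, where there is nothing to prove. Otherwise, the four vertices $v, u_1, w, u_2$ are pairwise distinct: $v$ differs from $u_1,u_2,w$ since $v \notin C$; $u_1 \neq u_2$ by choice; and $w$ differs from $u_1, u_2$ in the remaining subcase. Since $C$ is a clique, $\{u_1, w\}, \{w, u_2\} \in E$, and by choice of $u_1, u_2$ also $\{v, u_1\}, \{u_2, v\} \in E$. Hence $v, u_1, w, u_2, v$ is a cycle of length four in $\Gamma$.

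Now I would invoke \cref{lem:diamonds} on this 4-cycle to conclude that the four vertices induce a complete subgraph, which in particular gives $\{v, w\} \in E$. Since $w \in C$ was arbitrary, $v$ is adjacent to every vertex of $C$, so $C \cup \{v\}$ is a clique.

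There is no real obstacle here: the entire argument is a one-line reduction to \cref{lem:diamonds}, and the only thing to be careful about is verifying that the four vertices are genuinely distinct so that they actually form a 4-cycle (and not a shorter closed walk), which is why I split off the trivial subcases $v \in C$ and $w \in \{u_1, u_2\}$ at the beginning.
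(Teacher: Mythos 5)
Your proof is correct. Note that the paper itself gives no proof of this lemma at all: it is imported verbatim as \cite[Theorem~3.5]{stemple1974geodetic}, so there is nothing to compare against line by line. What you have done is supply a self-contained derivation from \cref{lem:diamonds} (itself \cite[Theorem~3.3]{stemple1974geodetic}), and the argument is sound: the four vertices $v, u_1, w, u_2$ are pairwise distinct in the nontrivial subcase, the four required edges $\{v,u_1\}, \{u_1,w\}, \{w,u_2\}, \{u_2,v\}$ are all present, so they form a genuine $4$-cycle, and \cref{lem:diamonds} then forces $\{v,w\} \in E$. Your care in checking distinctness is exactly the right thing to worry about, since the paper's definition of ``cycle'' does not by itself rule out repeated vertices, and \cref{lem:diamonds} is only informative for a $4$-cycle on four distinct vertices. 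The one thing worth being aware of is that this makes \cref{lem:adjacent_part_of_clique} logically dependent on \cref{lem:diamonds}, whereas in the source both are independent theorems; since \cref{lem:diamonds} is the more primitive statement and is not derived from \cref{lem:adjacent_part_of_clique} anywhere, there is no circularity, and your reduction is a perfectly good way to make the paper's toolkit self-contained.
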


\begin{lemma}
  \label{lem:partition_neighbors_cliques}
  Let $\Gamma = (V, E)$ be a geodetic graph.
  Then the neighbors of any vertex $v \in V$ can be partitioned into a set of disjoint cliques.
\end{lemma}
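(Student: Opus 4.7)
The plan is to exhibit the desired partition as the equivalence classes of a natural relation on $\cN(v)$, using \Cref{lem:diamonds} to establish transitivity.

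More precisely, I would define a relation $\sim$ on $\cN(v)$ by declaring $u \sim w$ if and only if either $u = w$ or $\{u,w\} \in E$. Reflexivity and symmetry are immediate from the definition, so the only thing requiring verification is transitivity. Suppose $u_1, u_2, u_3 \in \cN(v)$ are pairwise distinct with $u_1 \sim u_2$ and $u_2 \sim u_3$. Then $\{u_1, u_2\}, \{u_2, u_3\}, \{u_3, v\}, \{v, u_1\}$ are all edges, so $u_1, u_2, u_3, v, u_1$ is a cycle of length four in $\Gamma$. By \Cref{lem:diamonds}, its vertices induce a complete subgraph, and in particular $\{u_1, u_3\} \in E$, so $u_1 \sim u_3$. (The degenerate cases where some of $u_1, u_2, u_3$ coincide are trivially handled; note also that $v \neq u_i$ for any $i$ since $\Gamma$ has no loops.)

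Hence $\sim$ is an equivalence relation on $\cN(v)$, and the equivalence classes partition $\cN(v)$. By construction, any two distinct vertices in the same equivalence class are adjacent, so each class is a clique, yielding the desired partition.

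There is no real obstacle here: the statement is essentially a direct corollary of \Cref{lem:diamonds}. The only minor subtlety is to ensure that the four vertices forming the $4$-cycle are pairwise distinct so that \Cref{lem:diamonds} genuinely applies, which is handled by a small case distinction as above.
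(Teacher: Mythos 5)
Your proof is correct. The only real difference from the paper's argument is the tool used to establish transitivity of the relation on $\cN(v)$: you route the key step through \Cref{lem:diamonds}, observing that $u_1, u_2, u_3, v, u_1$ is a $4$-cycle whose vertices must induce a complete subgraph, whereas the paper argues directly from geodeticity — if $x,y,z \in \cN(v)$ with $\{x,y\},\{y,z\} \in E$ but $\{x,z\} \notin E$, then $x$-$v$-$z$ and $x$-$y$-$z$ are two distinct geodesics of length two, a contradiction. The two arguments are morally the same (Stemple's $4$-cycle lemma is itself proved by exhibiting two competing geodesics), so neither buys much over the other; the paper's version is marginally more self-contained since it needs no imported result, while yours makes the dependence on the $4$-cycle structure explicit and correctly handles the degenerate cases and the distinctness of the four vertices needed to apply \Cref{lem:diamonds}.
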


\begin{proof}
  Assume for contradiction that $x,y,z \in \mathcal{N}(v)$ with $\{x,y\} \in E$, $\{y,z\} \in E$ but $\{x,z\} \notin E$.
  Then there are two geodesics from $x$ to $z$ (one via $v$ and one via $y$), contradicting $\Gamma$ being geodetic. 
\end{proof}

After fixing a starting point in a Cayley graph $\Cay(G, \Sigma)$ (for instance the vertex corresponding to the identity element $1$), paths starting at $1$ are in bijection with words $w \in \Sigma^*$ (recall that we always assume the generating set $\Sigma \sse G$ to be symmetric). This allows us to denote paths by words in $\Sigma^*$ rather than vertex sequences and leads to the following observation.
\begin{observation}
	The Cayley graph $\Cay(G,\Sigma)$ of a group $G$ is geodetic if and only if each 
    element $g \in G$ is represented by a unique geodesic $w \in \Sigma^\ast$, with the identity element  represented by the empty word $\varepsilon\in \Sigma^\ast$.
\end{observation}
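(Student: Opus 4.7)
The plan is to pivot on vertex-transitivity of Cayley graphs and reduce the ``pair of vertices'' quantifier to a single basepoint. First I would formalize the bijection alluded to in the text: a word $w = a_1 \cdots a_n \in \Sigma^*$ corresponds to the path $1, a_1, a_1 a_2, \ldots, a_1 \cdots a_n$ starting at the identity. Under this correspondence, the path has length $n = \abs{w}$ and terminates at the vertex $g \in G$ obtained by evaluating $w$; conversely, every path $v_0=1, v_1, \ldots, v_n$ yields the word $w = a_1 \cdots a_n$ where $a_i = v_{i-1}^{-1}v_i \in \Sigma$ (this lies in $\Sigma$ by definition of the edge set, and is well-defined since the generating set is symmetric).

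Next I would use the group action by left translation already noted in the Preliminaries: for any $h \in G$, the map $v \mapsto hv$ is a graph automorphism of $\Cay(G,\Sigma)$ and in particular an isometry. Hence for any pair $u,v \in G$, geodesics from $u$ to $v$ are in bijection (via left multiplication by $u^{-1}$) with geodesics from $1$ to $u^{-1}v$. Thus the Cayley graph is geodetic if and only if for every $g \in G$, the geodesics from $1$ to $g$ are unique.

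For the final equivalence, I would combine the two observations: via the word/path bijection, geodesics from $1$ to $g$ correspond bijectively to words $w \in \Sigma^*$ of minimal length with $w = g$, i.e., to geodesic words representing $g$. Hence uniqueness of geodesics from $1$ to every $g$ is exactly uniqueness of the geodesic word $\mathrm{geod}(g)$ for every $g \in G$.

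There is no real obstacle here; the whole statement is essentially a dictionary translation between the graph-theoretic and the combinatorial-group-theoretic viewpoint. The only point requiring minor care is ensuring that the word-to-path map is a bijection (which uses symmetry of $\Sigma$ so that consecutive edges always correspond to a letter of $\Sigma$) and that left translation preserves both adjacency and path lengths, which is immediate from the definition $E = \{\{g,ga\} \mid g \in G, a \in \Sigma\}$.
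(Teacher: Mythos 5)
Your argument is correct and matches the paper's intended justification: the paper states this as an unproved observation, preceded only by the remark that paths starting at $1$ are in bijection with words in $\Sigma^*$ and the earlier note that left translations make $\Cay(G,\Sigma)$ vertex-transitive, which is exactly the dictionary you spell out. No gaps; your write-up just makes explicit what the paper leaves implicit.
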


\section{Structure of geodetic Cayley graphs}\label{sec:theoryProofs}

\subsection{Complete subgroups}

\begin{definition}
    Let $G$ be a group with generating set $\Sigma$.
    We call $H \le G$ a \emph{complete subgroup} (with respect to $\Sigma$) if $H \setminus \{1\} \sse \Sigma$ or, equivalently, if $H \sse G$ induces a complete subgraph of $\Cay(G,\Sigma)$.
\end{definition}

\begin{lemma}\label{lem:cliques_subgroup_or_partner}
    Let $\Gamma = \mathrm{Cay}(G, \Sigma)$ be geodetic and let $C \sse G$ be a maximal clique of $\Gamma$ with $1 \in C$.

    \begin{enumerate}
    \item Then $g^{-1}C = C$ or $g^{-1}C \cap C =\{1\}$ for each $g \in C$.
    \item If $C$ is the only maximal clique $C'$ with $1 \in C'$ and $\abs{C'}=\abs{C}$, then $C$ is closed under inversion.
    \end{enumerate}
\end{lemma}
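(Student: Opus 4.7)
The plan is to prove~(1) by exploiting the fact that left multiplication by any group element is a graph automorphism of $\Cay(G,\Sigma)$, which allows one to play any common element of $C$ and a translate $g^{-1}C$ off against a third vertex via \Cref{lem:adjacent_part_of_clique}; part~(2) will then fall out almost for free from the same automorphism viewpoint.

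For part~(1), I would fix $g \in C$ and consider the left translate $g^{-1}C$, which by the automorphism observation is a clique of size $\abs{C}$ containing $1 = g^{-1}g$. Assuming that $g^{-1}C \cap C$ strictly contains $\{1\}$, pick some $h \in (g^{-1}C \cap C) \setminus \{1\}$ and aim to show $g^{-1}C \sse C$ (which by $\abs{g^{-1}C}=\abs{C}$ will then upgrade to equality). For an arbitrary $v \in g^{-1}C$, either $v \in \{1,h\} \sse C$ and we are done, or $v$, $1$, and $h$ are three distinct vertices of the clique $g^{-1}C$, so $v$ is adjacent to the two distinct $C$-vertices $1$ and $h$. \Cref{lem:adjacent_part_of_clique} then forces $C\cup\{v\}$ to be a clique, and maximality of $C$ yields $v\in C$.

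For part~(2), fix any $g \in C$. Left multiplication by $g^{-1}$ is a graph automorphism and therefore maps the maximal clique $C$ to another maximal clique $g^{-1}C$ of size $\abs{C}$. The uniqueness hypothesis then forces $g^{-1}C = C$, so in particular $g^{-1} = g^{-1}\cdot 1 \in g^{-1}C = C$, proving that $C$ is closed under inversion. (Equivalently, part~(1) applied to $g$ rules out the second alternative, since the second alternative would exhibit $g^{-1}C$ as a distinct maximal clique of the same size as $C$.) I do not anticipate a real obstacle in any of this; the whole argument hinges on recognising $g^{-1}C$ as the image of $C$ under a graph automorphism and on a single application of \Cref{lem:adjacent_part_of_clique}.
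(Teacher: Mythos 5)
Your argument is correct and follows essentially the same route as the paper: both proofs hinge on observing that left translation by $g^{-1}$ is a graph automorphism, so $g^{-1}C$ is a maximal clique containing $1$, and then invoking a Stemple-type clique lemma to force the dichotomy in part~(1), with part~(2) following immediately from uniqueness. The only cosmetic difference is that you deduce part~(1) from \Cref{lem:adjacent_part_of_clique} (plus maximality and equal cardinality), whereas the paper cites \Cref{lem:partition_neighbors_cliques}; these are interchangeable here.
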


\begin{proof}
    Recall that multiplying by a group element on the left induces an isomorphism.
    Since $1 \in g^{-1}C \cap C$ for each $g \in C$, the first statement follows from \Cref{lem:adjacent_part_of_clique}.
    For the second statement, note that we have shown that $g^{-1}C = C$ holds for each $g \in C$ by uniqueness of $C$.
    As such, $g^{-1} \in g^{-1} C = C$.
\end{proof}

Note that, in the second case of the previous lemma, where $C$ is the only maximal clique of its size, $C$ is not only closed under inversion, it is also a subgroup.
We prove a more general statement.

\begin{lemma}\label{lem:symmetric_clique_subgroup}
  Let $\Gamma = \mathrm{Cay}(G, \Sigma)$ be geodetic.
  If $X \sse G$ is a clique of $\Gamma$ with $1 \in X$ and such that $X$ is closed under inversion, then $\langle X \rangle$ is a complete subgroup.
\end{lemma}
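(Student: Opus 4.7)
The plan is to prove the equivalent claim that $\langle X \rangle \setminus \{1\} \subseteq \Sigma$; left-translation invariance of $\Gamma$ then ensures the induced subgraph on $\langle X \rangle$ is complete, since for distinct $g, h \in \langle X \rangle$ we have $g^{-1}h \in \langle X \rangle \setminus \{1\} \subseteq \Sigma$. Because $X$ is closed under inversion, every element of $\langle X \rangle$ can be expressed as a product $x_1 \cdots x_n$ with each $x_i \in X \setminus \{1\}$, and I would strongly induct on $n$.

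For $n = 1$ we have $g \in X \setminus \{1\} \subseteq \Sigma$ since $X$ is a clique containing $1$. The case $n = 2$ is the step that exploits the inversion-closure of $X$: for $g = x_1 x_2 \neq 1$, the elements $x_1^{-1}$ and $x_2$ both lie in $X$ and are distinct (otherwise $g = 1$), so the clique structure of $X$ provides the edge $\{x_1^{-1}, x_2\}$, which is labelled by $(x_1^{-1})^{-1} x_2 = g$, forcing $g \in \Sigma$.

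The inductive step for $n \geq 3$ is the core of the argument. I would consider the two factorizations $g = h \cdot x_n$ with $h = x_1 \cdots x_{n-1}$, and $g = x_1 \cdot h'$ with $h' = x_2 \cdots x_n$. The induction hypothesis places $h, h' \in \Sigma \cup \{1\}$. If $h = 1$ or $h' = 1$, then $g$ equals $x_n$ or $x_1$ and is immediately in $\Sigma$; if $h = x_1$, then $x_2 \cdots x_{n-1} = 1$, so $g = x_1 x_n$ reduces to the length-two case. Otherwise $1, h, g, x_1$ are four distinct vertices forming a $4$-cycle via the edges $\{1, h\}$, $\{h, g\}$ (labelled $x_n$), $\{g, x_1\}$ (labelled $h'^{-1}$) and $\{x_1, 1\}$, and \cref{lem:diamonds} immediately yields the edge $\{1, g\}$, i.e., $g \in \Sigma$.

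The main obstacle is the careful case analysis in the inductive step: every coincidence among $\{1, h, g, x_1\}$ must be tracked and shown to reduce either to a strictly shorter product (permitting use of the induction hypothesis) or to the length-two base case. Once those reductions are confirmed, the proof is driven entirely by the diamond lemma together with the inversion-closure of $X$.
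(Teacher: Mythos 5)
Your proof is correct, but it takes a genuinely different route from the paper's. You prove $\langle X\rangle\setminus\{1\}\subseteq\Sigma$ directly by induction on word length over $X\setminus\{1\}$, extending one letter at a time: the two factorizations $g=h\cdot x_n=x_1\cdot h'$ give a $4$-cycle through $1$, $h$, $g$, $x_1$, and \cref{lem:diamonds} forces the edge $\{1,g\}$. The degenerate cases ($h=1$, $h'=1$, $h=x_1$, i.e.\ coincidences among the four vertices) are exactly the ones that need excluding before the diamond lemma applies, and your reductions handle all of them; the $n=2$ base case correctly uses inversion-closure of $X$ to exhibit the edge $\{x_1^{-1},x_2\}$ labelled by $g$. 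The paper instead fixes a maximal clique $C\supseteq X$, shows $g^2\in C$ for $g\in X$, deduces $g^{-1}C=C$ from \cref{lem:cliques_subgroup_or_partner}, and concludes $X^2\subseteq X\cdot C=C$, iterating with $X\to X^2\to X^4\to\cdots$. The paper's route buys a cleaner induction with no degeneracy bookkeeping, since the maximal clique $C$ absorbs all products at once and the "partner clique" dichotomy does the work; your route is more elementary and self-contained, relying only on the diamond lemma rather than on the partition-into-cliques machinery, at the cost of the explicit case analysis you flag.
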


\begin{proof}
    Let $C \subseteq G$ be a maximal clique containing $X$. 
    If $g \in X\setminus\{1\}$, then $\{g^{-1}, 1, g\} \in C$ so $\{g^{-1}, 1\} \in g^{-1}C$.
    Hence $g^{-1}C\cap C\neq \{1\}$ and so $g^{-1}C=C$ by \Cref{lem:cliques_subgroup_or_partner}.
    Thus for any $g\in X$ we have $C = g(g^{-1}C) = gC$.
    By induction, assume products of $i$ elements of $X$ lie in $C$ which holds for $i=1$. Then $g_1\dots g_{i+1}\in g_1C=C$. This shows that $\gen{X}\subseteq C$ so $\gen{X}$ is a clique.
\end{proof}

We observe that the second item of \Cref{lem:cliques_subgroup_or_partner} and \Cref{lem:symmetric_clique_subgroup} together imply \cite[Lemma~14]{wrap108882} of Federici.

\begin{lemma}\label{lem:complete_subgroup_join}
  Let $\mathrm{Cay}(G, \Sigma)$ be geodetic and let $H_1, H_2 \leq G$ be complete subgroups. 
  If $H_1 \cap H_2 \neq \Triv$ then $\langle H_1, H_2 \rangle$ is also a complete subgroup of $G$.
\end{lemma}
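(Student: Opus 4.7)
The plan is to reduce the statement to \Cref{lem:symmetric_clique_subgroup} by showing that $H_1 \cup H_2$ itself induces a clique in $\Gamma = \mathrm{Cay}(G, \Sigma)$. Once this is established, the union is closed under inversion (as a union of subgroups), contains $1$, and generates $\langle H_1, H_2 \rangle$, so \Cref{lem:symmetric_clique_subgroup} yields the conclusion immediately.

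To see that $H_1 \cup H_2$ is a clique, pick some $g \in (H_1 \cap H_2) \setminus \{1\}$, which exists by hypothesis. The key observation is that both $1$ and $g$ lie in $H_1$ and in $H_2$. Thus, for any $h_2 \in H_2 \setminus \{1, g\}$, the vertex $h_2$ is adjacent in $\Gamma$ to the two distinct vertices $1$ and $g$ of the clique $H_1$ (since $H_1$ is complete). Applying \Cref{lem:adjacent_part_of_clique} with $C = H_1$ and $v = h_2$ shows that $H_1 \cup \{h_2\}$ is a clique.

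I would then iterate this: after having built a clique of the form $H_1 \cup S$ with $S \sse H_2 \setminus \{1, g\}$, any further vertex $h_2' \in H_2 \setminus (S \cup \{1, g\})$ is again adjacent to the two distinct vertices $1, g$ of this clique, so \Cref{lem:adjacent_part_of_clique} lets us extend to $H_1 \cup S \cup \{h_2'\}$. After finitely many steps, $H_1 \cup H_2$ is a clique as desired.

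I do not expect any real obstacle here: the construction is forced by the hypothesis that $1$ and $g$ give us two ready-made anchor vertices inside both cliques, which is exactly what \Cref{lem:adjacent_part_of_clique} needs. The only small care required is to separate the trivial cases $h_2 \in \{1, g\}$ (which are already in $H_1$) from the generic case, and to verify that the resulting symmetric clique hypothesis of \Cref{lem:symmetric_clique_subgroup} is met; neither point is substantive.
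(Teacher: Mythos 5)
Your proposal is correct and follows essentially the same route as the paper: establish that $H_1 \cup H_2$ is a clique via \Cref{lem:adjacent_part_of_clique} (using $1$ and a nontrivial $g \in H_1 \cap H_2$ as the two anchor vertices), then conclude with \Cref{lem:symmetric_clique_subgroup}. The only difference is that you spell out the iterative application of \Cref{lem:adjacent_part_of_clique}, which the paper leaves implicit.
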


\begin{proof}
  Under these assumptions $H_1 \cup H_2$ is a clique (\Cref{lem:adjacent_part_of_clique}).
  Since $1 \in H_1 \cup H_2$ and $H_1 \cup H_2$ is closed under inversion, $\langle H_1, H_2 \rangle$ is a clique by \Cref{lem:symmetric_clique_subgroup}.
\end{proof}

\begin{lemma}
  \label{lem:clique_large_independentset}
  Let $\Gamma = (V, E)$ be a biconnected vertex-transitive non-complete geodetic graph and let $C \sse V$ be a clique of $\Gamma$ of size $k$.
  Then there is an independent set $I \sse \cN(C)$ 
  of size $k^2 - k$.
\end{lemma}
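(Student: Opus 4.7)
The plan is to analyze the local structure of $\cN(C)$ and leverage vertex-transitivity. Let $A = \{w \in V \setminus C : C \sse \cN(w)\}$ be the vertices outside $C$ adjacent to every vertex of $C$. By iterated application of \Cref{lem:adjacent_part_of_clique}, the set $K^* \coloneqq C \cup A$ is a clique and, in fact, the unique maximal clique of $\Gamma$ that contains $C$. For each $v \in C$, define $B_v = \cN(v) \setminus K^*$; by construction, each $w \in B_v$ is adjacent to $v$ but to no other vertex of $C$. Using \Cref{lem:diamonds} (which says 4-cycles induce complete subgraphs), I would verify that there are no edges between $B_v$ and $B_u$ for distinct $v, u \in C$, nor between $B_v$ and $A$: any such edge would produce a 4-cycle not inducing a complete subgraph.

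Next, for each $v \in C$, \Cref{lem:partition_neighbors_cliques} partitions $\cN(v)$ into disjoint cliques. The structural observations above ensure that one part is precisely $K^* \setminus \{v\}$, and the remaining parts $D_v^1, \ldots, D_v^{m_v}$ lie inside $B_v$. By vertex-transitivity, the number of maximal cliques containing any vertex is a constant $\mu$, so $m_v = \mu - 1$ is independent of $v$; denote this common value by $m$.

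The crux of the argument, and the main obstacle, is to show that $m \geq k - 1$. The plan here is to invoke vertex-transitivity: for each $u \in C \setminus \{v\}$, choose an automorphism $\phi_u$ of $\Gamma$ with $\phi_u(u) = v$, so that $\phi_u(C \setminus \{u\})$ is a $(k-1)$-clique in $\cN(v)$, contained in a single part of the partition. The hard part will be arguing that, as $u$ varies over $C \setminus \{v\}$, these $(k-1)$-cliques witness enough distinct parts of the partition to force $m \geq k - 1$. I anticipate this step requires a careful case analysis (based on whether the images $\phi_u(C)$ lie inside $K^*$ or escape it) combined with the rigidity imposed by geodesic uniqueness, perhaps via a 4-cycle obstruction of the kind ruled out by \Cref{lem:diamonds}.

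Once $m \geq k - 1$ is established, the independent set is built by picking one vertex $w_v^i \in D_v^i$ for each $v \in C$ and each $i \in \{1, \ldots, m\}$. This yields $|I| = km \geq k(k - 1) = k^2 - k$ vertices in $\cN(C)$. The set is independent: within each $B_v$, the chosen $w_v^i$ come from distinct parts of the partition of $\cN(v)$ and hence are pairwise non-adjacent; across distinct $v, u \in C$, the absence of edges between $B_v$ and $B_u$ established earlier ensures pairwise non-adjacency.
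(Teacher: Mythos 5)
Your structural analysis of $\cN(C)$ is sound and in fact closely parallels the paper's counting: the sets $B_v$ are pairwise non-adjacent by the 4-cycle argument (\Cref{lem:diamonds}), picking one vertex from each clique-part of each $B_v$ gives an independent set, and if each $\cN(v)$ splits into at least $k-1$ parts besides $K^*\setminus\{v\}$ you indeed obtain $k(k-1)$ vertices in $\cN(C)$. The problem is that the step you defer --- showing $m \ge k-1$, i.e.\ that every vertex's neighborhood decomposes into at least $k$ cliques, equivalently contains an independent set of size $k$ --- is not a technical detail but the entire non-trivial content of the lemma, and the strategy you sketch for it does not work. An automorphism $\phi_u$ with $\phi_u(u)=v$ gives no control over where $\phi_u(C\setminus\{u\})$ lands: each of these $(k-1)$-cliques lies inside a \emph{single} part of the partition of $\cN(v)$, so each witnesses only one part, and nothing prevents all of them from witnessing the same part (for instance $K^*\setminus\{v\}$ itself). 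Vertex-transitivity alone cannot force distinct parts here.

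The paper closes exactly this gap by importing an external structural result: \cite[Corollary 1]{gorovoy2021graphs} states that every biconnected non-complete geodetic graph containing a clique of size $k$ contains an induced star $K_{1,k}$, i.e.\ \emph{some} vertex has $k$ pairwise non-adjacent neighbors; vertex-transitivity then transports this independent set to every vertex, which is precisely your claim $m\ge k-1$. From there the paper runs essentially your disjointness and independence arguments (via \Cref{lem:adjacent_part_of_clique} and \Cref{lem:diamonds}) to assemble the $k(k-1)$ vertices. Unless you reprove that corollary --- which is a genuine theorem about geodetic graphs, not a consequence of transitivity or of the 4-cycle rigidity you invoke --- your proof is incomplete at its central step.
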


\begin{proof}
  By~\cite[Corollary 1]{gorovoy2021graphs} every biconnected non-complete geodetic graph containing a clique of size $k$ also contains the star graph $K_{1,k}$ as an induced subgraph.
  In other words, there exists a vertex $v \in V$ with an independent set of size $k$ in its neighborhood $\cN(v)$.
  As $\Gamma$ is vertex-transitive, for every vertex $v \in V$ there is an independent set $I(v) \sse \cN(v)$ of size $k$.

  Fix a maximal clique $\tilde C$ of $\Gamma$ with $C \sse \tilde C$ and set $\tilde I(v) \coloneqq I(v) \setminus \tilde C$. 
  Note that $\tilde I(v)\subseteq \cN(C)$. 
  Since an independent set can contain at most one vertex of any given clique, we have $\abs{\smash{\tilde{I}}(v)} \ge \abs{I(v)} - 1 = k - 1$.

  We claim that $\tilde{I}(u) \cap \tilde{I}(v) = \emptyset$ for distinct $u, v \in C$.
  If there were a vertex $x \in \tilde{I}(u) \cap \tilde{I}(v)$, then we would have $x \in \tilde{C}$ by \Cref{lem:adjacent_part_of_clique}, because $x$ is adjacent to two vertices in $\tilde{C}$.
  That contradicts the definition of $\tilde{I}$, which excludes vertices in $\tilde{C}$.

  Let $I \coloneqq \bigcup_{v \in C} \tilde{I}(v) \sse \cN(C)$ 
  and observe that $\abs{I} \geq k(k-1) = k^2 - k$, as the union is disjoint.
  It remains to show that $I$ is indeed an independent set.
  Assume there are $x, y \in I$ with $\{x, y\} \in E$.
  By definition of $I$ there are $u, v \in C$ such that $x \in \tilde{I}(u)$ and $y \in \tilde{I}(v)$.
  We must have $u \neq v$, as $\tilde{I}(u)$ is an independent set.
  Now we have a 4-cycle $u, x, y, v$.
  By \Cref{lem:diamonds} this implies the existence of the edge $\{u, y\}$.
  Now $y$ is a neighbor of both $u$ and $v$.
  That implies $y \in \tilde{C}$ by \Cref{lem:adjacent_part_of_clique}, a contradiction with the definition of $\tilde{I}(v)$.
\end{proof}

\begin{proposition}\label{pro:complete_subgroup_bound}
    Let $\Gamma = \mathrm{Cay}(G, \Sigma)$ be a geodetic but not complete Cayley graph.
    If $H \leq G$ is a complete subgroup with respect to $\Sigma$, then $\abs{G} \ge \abs{H}^3 - \abs{H}^2 + \abs{H}$.
\end{proposition}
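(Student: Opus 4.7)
The plan is to combine the independent-set bound from \Cref{lem:clique_large_independentset} with a coset-counting argument driven by the defining property $H \setminus \{1\} \subseteq \Sigma$ of a complete subgroup.

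First, I would apply \Cref{lem:clique_large_independentset} to the clique $C := H$ in $\Gamma = \Cay(G,\Sigma)$. The graph is vertex-transitive and non-complete by hypothesis, and biconnected by Watkins' theorem (any connected vertex-transitive graph of valency at least two is 2-connected, and the valency-one case would force $\Gamma = K_2$, which is complete and hence excluded). Inspecting the proof of \Cref{lem:clique_large_independentset} shows that the independent set it constructs actually sits inside $\mathcal{N}(H) \setminus \tilde{C}$ for a chosen maximal clique $\tilde C \supseteq H$. Thus I obtain an independent set $I \subseteq \mathcal{N}(H) \setminus H$ of size $\abs{H}^2 - \abs{H}$.

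The key step is then to show that $H$ together with the cosets $\{yH : y \in I\}$ constitute $\abs{I} + 1$ pairwise distinct left cosets of $H$ in $G$. None of the cosets $yH$ equals $H$ itself since $I \cap H = \emptyset$. For distinct $y, y' \in I$, if $yH = y'H$ then $y^{-1}y' \in H \setminus \{1\} \subseteq \Sigma$ (using crucially that $H$ is a complete subgroup), so $\{y, y'\}$ is an edge of $\Gamma$, contradicting the independence of $I$. Lagrange's theorem then yields
\[
  \abs{G} \geq (\abs{I} + 1) \cdot \abs{H} = (\abs{H}^2 - \abs{H} + 1) \cdot \abs{H} = \abs{H}^3 - \abs{H}^2 + \abs{H}.
\]

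The main (and essentially only) subtlety I foresee is the need to strengthen the conclusion of \Cref{lem:clique_large_independentset} from $I \subseteq \mathcal{N}(H)$ to $I \cap H = \emptyset$. This follows immediately from its proof (where the constructed $I$ avoids any chosen maximal clique containing $H$), but needs to be noted explicitly since the bare statement of the lemma does not suffice for the coset-distinctness argument. Once this is in hand, the entire argument reduces to the one-line observation that $H \setminus \{1\} \subseteq \Sigma$ forces an edge between any two elements of $I$ lying in a common left coset of $H$.
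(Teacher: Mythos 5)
Your proof is correct and follows essentially the same route as the paper: invoke \Cref{lem:clique_large_independentset} to get an independent set of size $\abs{H}^2-\abs{H}$ in $\cN(H)$, observe that each left coset of $H$ is a clique and hence meets the independent set in at most one point, and count cosets via Lagrange. You additionally make explicit two details the paper leaves implicit (biconnectivity of the Cayley graph, and the fact that the constructed independent set avoids $H$ because it avoids the chosen maximal clique $\tilde C \supseteq H$), which is a welcome but not substantively different refinement.
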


\begin{proof}
    By \Cref{lem:clique_large_independentset} there is an independent set of size $\abs{H}^2 - \abs{H}$ in the neighborhood of $H$ which does not contain any vertex of $H$.
    We now look at the left cosets of $H$.
    Each such coset is a clique of $\Gamma$.
    Thus, each coset can contain at most one point of the independent set.
    Since $H$ itself does not contain any point of the independent set, the index of $H$ is at least $\abs{H}^2 - \abs{H} + 1$.
    Hence, $\abs{G} = \abs{G : H} \abs{H} \ge \abs{H}^3 - \abs{H}^2 + \abs{H}$.
\end{proof}

\subsection{Conjugacy classes and elements of order two}

\begin{lemma}[Conjugacy class of generators]\label{lem:conjugacyclass}
    Let $G$ be a group with geodetic Cayley graph $\mathrm{Cay}(G, \Sigma)$ and $H \leq G$ such that $H = \langle H \cap \Sigma \rangle$.
    If $x^H \subseteq \Sigma$ for some $x \in \Sigma$, then $H \cap \Sigma = \{ x^{\pm1} \}$ or $H$  is a complete subgroup.
    In particular, if $x^G \sse \Sigma$ for some $x \in G$, then $\Cay(G, \Sigma)$ satisfies \cref{conj:main}.
\end{lemma}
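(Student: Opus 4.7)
The plan is to prove the contrapositive: assuming $H \cap \Sigma \neq \{x^{\pm 1}\}$, I will deduce that $H$ is a complete subgroup. Fix some $y \in H \cap \Sigma$ with $y \notin \{x^{\pm 1}\}$ (note also $y \neq 1$, since $1 \notin \Sigma$).

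First, I would observe that $1, x, xy, y$ is a $4$-cycle in $\Cay(G, \Sigma)$: the edges $\{1, x\}$, $\{x, xy\}$ and $\{1, y\}$ are immediate, while $\{xy, y\}$ is present because $(xy)^{-1}y = (y^{-1}xy)^{-1}$ and $y^{-1}xy \in x^H \subseteq \Sigma$ combined with the symmetry of $\Sigma$; the four vertices are pairwise distinct because $y \notin \{1, x, x^{-1}\}$ and $x \neq 1$. Applying \cref{lem:diamonds} turns this cycle into a clique, yielding $xy, x^{-1}y \in \Sigma$.

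Next, assuming $\ord(x) \geq 3$, I would apply \cref{lem:diamonds} to the $4$-cycle $1, x, x^2, xy$: all four edges are present by the previous step and the vertices are pairwise distinct, so this gives $x^2 \in \Sigma$, and hence $x^{-2} \in \Sigma$ by symmetry.

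To finish, I would fix a maximal clique $C$ containing $\{1, x\}$ and show $(H \cap \Sigma) \cup \{1\} \subseteq C$. For any $y' \in H \cap \Sigma$ with $y' \neq x$, the edge $\{1, y'\}$ is clear, and I need the edge $\{x, y'\}$, i.e.\ $x^{-1}y' \in \Sigma$. If $y' = x^{-1}$ (possible only when $\ord(x) \geq 3$) then $x^{-1}y' = x^{-2} \in \Sigma$ from the previous step; otherwise $y' \in H \cap \Sigma \setminus \{x^{\pm 1}\}$ and rerunning the very first argument with $y'$ in place of $y$ gives $x^{-1}y' \in \Sigma$. Then \cref{lem:adjacent_part_of_clique} forces $y' \in C$. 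Hence $X \coloneqq (H \cap \Sigma) \cup \{1\} \subseteq C$ is a clique, contains $1$, and is closed under inversion, so \cref{lem:symmetric_clique_subgroup} yields that $\langle X \rangle = \langle H \cap \Sigma \rangle = H$ is a complete subgroup.

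The main obstacle is not difficulty but careful bookkeeping: verifying pairwise distinctness of the vertices in each $4$-cycle, and handling the degenerate case $\ord(x) = 2$, in which the second step and the subcase $y' = x^{-1}$ of the final argument both become vacuous while all remaining checks go through verbatim.
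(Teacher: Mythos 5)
Your proof is correct and rests on the same key observation as the paper's: since $x^{y}\in\Sigma$, the element $xy$ admits a second length-two representation, forcing $xy,\,x^{-1}y\in\Sigma$ (you phrase this via a $4$-cycle and \Cref{lem:diamonds}; the paper argues via uniqueness of geodesics, which is the same thing). The only difference is in the final assembly: the paper directly verifies that every product $uv$ with $u,v\in H\cap\Sigma$ has length at most one (handling the case $u,v\in\{x^{\pm1}\}$ by the substitution $u'=uy$, $v'=y^{-1}v$ rather than your $x^2$-cycle), whereas you absorb $(H\cap\Sigma)\cup\{1\}$ into a single maximal clique using \Cref{lem:adjacent_part_of_clique} before invoking \Cref{lem:symmetric_clique_subgroup}; both routes work.
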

\begin{proof}
    If $y \in H \cap \Sigma$ with $y \neq x^{\pm1}$, then $x^y \in \Sigma$ and, therefore, $yx^y = xy \in \Sigma$ as this element cannot have length zero or two.
    We also have $x^{-1}y \in \Sigma$ and $y x^{\pm1} \in \Sigma$ by symmetry.
    
    If $H\cap \Sigma\neq \{x^{\pm1}\}$, then there exists $y \in H \cap \Sigma$ with $y \neq x^{\pm1}$.
    We will show that every $uv \in G$ with $u,v \in H \cap \Sigma$ has length at most one.
    If $u \not\in \{x^{\pm1}\}$ and $v \in \{x^{\pm1}\}$, or vice versa, then this follows from the argument above.
    Otherwise, first consider the case that  $u,v \not\in \{x^{\pm1}\}$ and define $u' \coloneqq u x$ and $v' \coloneqq \overline{x}v$.
    As $u' = ux = x^{\overline{u}}u$, we have $u'\in \Sigma$ and likewise $v' \in \Sigma$.
    Since $uv=u'v'$, it follows that $uv \in \Sigma$.
    If $u,v \in \{x^{\pm1}\}$, we set $u' \coloneqq uy$ and $v' \coloneqq y^{-1}v$ and by the same argument conclude that $uv \in \Sigma$.
    
    Finally, if $x^G \subseteq \Sigma$, then either $\Sigma = G \cap \Sigma = \{x^{\pm1}\}$ or $G$ is a complete subgroup of itself (or both in case $G \cong C_2$ or $G \cong C_3$).
    In the first case, $G = \gen{x^{\pm 1}}$ is a cyclic group.
    If, moreover, $\abs{G} > 2$, then $\Cay(G, \Sigma)$ is a cycle of length $\abs{G}$, which has to be odd as an even cycle is not geodetic.
    Otherwise as well as in the second case, $\Cay(G, \Sigma)$ is complete.
\end{proof}

The above lemma may seem technical but is extremely useful, for example, as demonstrated by the following consequences.
For the first of these, note that $x^G = \{x\}$ whenever $x \in Z(G)$.

\begin{corollary}[Generator in center]\label{cor:center}
    If $Z(G) \cap \Sigma \neq \emptyset$, then $\Cay(G, \Sigma)$ satisfies \cref{conj:main}.
\end{corollary}

From this we obtain the following result first shown by Georgakopoulos and presented in \cite{wrap108882}.

\begin{corollary}[{\cite[Proposition 10]{wrap108882}}]\label{cor:abelian}
    If $G$ is a finite abelian group, then $G$ satisfies \Cref{conj:main}.
\end{corollary}
\begin{proof}
    This is immediate from \cref{cor:center} since $G=Z(G)$.
\end{proof}
Note that \cref{lem:conjugacyclass} can also be applied to deduce the completeness of a subgroup. 
A simple example of such an application is as follows.
(Another example of this kind can be found in \cref{lem:saturate_c6_d6} and both of these observations are used to facilitate our computer search.)

\begin{corollary}[Commuting generators]\label{cor:commgen_subgroup}
    Let $\mathrm{Cay}(G, \Sigma)$ be geodetic and $x,y \in \Sigma$.
    If $xy = yx$ and $y\neq x^{\pm1}$, then $\langle x, y \rangle \leq G$ is a complete subgroup.
\end{corollary}
\begin{proof}
    Apply \Cref{lem:conjugacyclass} with $H = \langle x, y \rangle$.
\end{proof}

If a group has an element of order two we have the following.
\begin{lemma}[Order-two elements]\label{lem:conjugate_ord_two}
    Let $G$ be a group with generating set $\Sigma$ such that the Cayley graph is geodetic.
    Let $g \in G$ be an element of order two.
    Then the geodesic for $g$ is of the form 
    \[
        (w_1 \cdots w_{\ell}) \cdot w_{\ell + 1} \cdot (\overline{w_{\ell}} \cdots \overline{w_1})
    \]
    of length $2\ell+1$ where $\ell\in\mathbb N$ and $w_i\in\Sigma$ for each $1 \leq i \leq \ell +1$. 
    In particular, $\Sigma$ contains an element of order two conjugate to $g$, namely $w_{\ell + 1}$. 
\end{lemma}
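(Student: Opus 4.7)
The plan is to exploit the uniqueness of the geodesic for $g$ together with the fact that $g = g^{-1}$ (because $g$ has order two). Concretely, suppose $w \equiv a_1 a_2 \cdots a_n$ is the unique geodesic representing $g$, with each $a_i \in \Sigma$. Then $\overline{a_n} \cdots \overline{a_1}$ represents $g^{-1} = g$ and has the same length as $w$, so it is also a geodesic for $g$. By uniqueness of geodesics in $\mathrm{Cay}(G, \Sigma)$, we conclude that the two words are equal as words, i.e.\ $a_i = \overline{a_{n+1-i}}$ for every $1 \leq i \leq n$.

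Next I would argue that $n$ must be odd. If $n = 2m$ were even, then the middle two letters of $w$ would satisfy $a_{m+1} = \overline{a_m}$, so the subword $a_m a_{m+1}$ would evaluate to the identity. Cancelling it produces a strictly shorter word representing $g$, contradicting that $w$ is a geodesic. Hence $n = 2\ell + 1$ for some $\ell \in \mathbb{N}$, and the palindrome-like identity forces the middle letter $a_{\ell+1}$ to satisfy $a_{\ell+1} = \overline{a_{\ell+1}}$. In particular $a_{\ell+1}^2 = 1$, and since $1 \notin \Sigma$ this element is non-trivial, so it has order two. Setting $w_i \coloneqq a_i$ for $i = 1,\dots, \ell+1$ gives the claimed form of the geodesic.

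Finally, for the conjugacy statement, let $h \coloneqq w_1 \cdots w_\ell$, so $h^{-1} = \overline{w_\ell} \cdots \overline{w_1}$. Evaluating the geodesic yields
\[
g \;=\; h \, w_{\ell+1} \, h^{-1},
\]
which rearranges to $w_{\ell+1} = h^{-1} g h = g^h$. Thus $w_{\ell+1} \in \Sigma$ is an element of order two conjugate to $g$, completing the proof.

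I do not foresee any real obstacle: the whole argument essentially reduces to a single application of the uniqueness of geodesics to the pair $(w, \overline{w}^{\mathrm{rev}})$, where $\overline{w}^{\mathrm{rev}}$ denotes the formal inverse word. The only place one must be slightly careful is verifying that an even-length palindromic geodesic would cancel in the middle, which is where the geodesic property is used to force $n$ to be odd.
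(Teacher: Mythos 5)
Your proposal is correct and follows essentially the same route as the paper: apply uniqueness of geodesics to the word and its formal inverse to get the palindromic identity $a_i = \overline{a_{n+1-i}}$, rule out even length (the paper notes the whole word would then evaluate to $1$, while you cancel the middle pair to contradict minimality — the same observation), and read off the conjugating element from the middle letter. No gaps; the extra care about $1 \notin \Sigma$ ensuring $w_{\ell+1}$ has order exactly two is a nice touch.
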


\begin{proof}
  Let $w_1 \cdots w_k \in \Sigma^\ast$ be a geodesic for $g$.
  We have $g = g^{-1}$ and thus $w_1 \cdots w_k = \overline{w_k} \cdots \overline{w_1}$.
  Hence, we have two paths of length $k$ that lead from $1$ to $g$.
  Since $\Cay(G, \Sigma)$ is geodetic, $g$ must have a unique geodesic, \ie the two paths must coincide; hence, $w_i = \overline{w_{k-i+1}}$ for $i \in \{1, \dots, k\}$.
  
  If $k$ is even, then $g = (w_1 \cdots w_{k/2}) \cdot (w_{k/2 + 1} \cdots w_k) \equiv (w_1 \cdots w_{k/2}) \cdot (\overline{w_{k/2}} \cdots \overline{w_1}) = 1$, contradicting the assumption that $g$ is of order two.
  Thus, $k = 2 \ell + 1$ must be odd and we obtain the equation
  \begin{align*}
    g &= (w_1 \cdots w_{\ell}) \cdot w_{\ell + 1} \cdot (w_{\ell + 2} \cdots w_k) \\
    &\equiv (w_1 \cdots w_{\ell}) \cdot w_{\ell + 1} \cdot (\overline{w_{\ell}} \cdots \overline{w_1})\text{.}
  \end{align*}
  Hence, $g$ is conjugate to $w_{\ell + 1} \in \Sigma$ and $w_{\ell + 1} = \overline{w_{\ell + 1}}$, proving the lemma.
\end{proof}

The above results lead to the following observation which turns out to be extremely useful in any computer search, since it shows that every $2$-group immediately satisfies \Cref{conj:main}.

\begin{theorem}[Even order center]
  \label{cor:evencenter}
  Let $G$ be a group such that $Z(G)$ is of even order.
  Then the only geodetic Cayley graph of $G$ is the complete graph.
\end{theorem}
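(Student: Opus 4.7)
The plan is to combine Cauchy's theorem with the two main tools already developed in the excerpt: \Cref{lem:conjugate_ord_two} and \Cref{cor:center}.

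Since $|Z(G)|$ is even, Cauchy's theorem yields an element $z \in Z(G)$ of order two. The key observation is that $z$ being central means its conjugacy class is $\{z\}$. First I would fix an arbitrary generating set $\Sigma$ such that $\Cay(G,\Sigma)$ is geodetic; the goal is to show this graph must be complete. Applying \Cref{lem:conjugate_ord_two} to the order-two element $z$, the unique geodesic representing $z$ has odd length $2\ell+1$ and its middle letter $w_{\ell+1} \in \Sigma$ is an order-two element conjugate to $z$. Since $z$ is central, the only element conjugate to $z$ is $z$ itself, so $w_{\ell+1} = z$ and in particular $z \in \Sigma$.

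Now I have $z \in Z(G) \cap \Sigma$, which is precisely the hypothesis of \Cref{cor:center}. That corollary forces $\Cay(G,\Sigma)$ to be either a complete graph or an odd cycle. To eliminate the second case, note that an odd cycle as a Cayley graph forces $G$ to be cyclic of odd order, which would give $|Z(G)| = |G|$ odd, contradicting our assumption. Hence $\Cay(G,\Sigma)$ must be complete.

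The main obstacle is essentially already resolved by the earlier lemmas; the only subtle point is recognising that centrality of $z$ collapses the conjugacy-class conclusion of \Cref{lem:conjugate_ord_two} into membership $z \in \Sigma$, after which \Cref{cor:center} does all the work. No case analysis or calculation beyond this is needed.
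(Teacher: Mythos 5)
Your proposal is correct and follows essentially the same route as the paper's own proof: extract a central involution, use \Cref{lem:conjugate_ord_two} plus centrality to force it into $\Sigma$, then apply \Cref{cor:center} and rule out the odd cycle via the evenness of $\abs{G}$. The only cosmetic difference is how the odd-cycle case is dismissed (you argue via $G$ being cyclic of odd order, the paper directly via $\abs{G}$ being even), which amounts to the same thing.
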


\begin{proof}
  Assume we have a geodetic Cayley graph of $G$.
  By \Cref{lem:conjugate_ord_two}, the generating set $\Sigma$ must contain at least one element of each conjugacy class of elements of order two in $G$.
  Since $Z(G)$ is of even order, it must contain an element $g$ of order two.
  As $g$ is in the center of the group, it is not conjugate to any other elements.
  Hence, $g \in \Sigma$ and by \Cref{cor:center}, the Cayley graph is either complete, or an odd cycle.
  Since $G$ contains an even order subgroup, $G$ itself has even order.
  Therefore, the Cayley graph cannot be an odd cycle, so it must a be complete graph.
\end{proof}

\begin{remark}
    All but $4734$ groups of the approximately $49.5$ billion groups of order at most $1024$ are covered by the combination of \Cref{cor:evencenter} and \Cref{cor:abelian}.
\end{remark}

\subsection{Cayley graphs of diameter two}

\begin{proposition}\label{prop:notCG}
  If $G$ is of even order, then $G$ has no geodetic Cayley graph of diameter two.
\end{proposition}
\begin{proof}
Assume that $\Gamma = \mathrm{Cay}(G, \Sigma)$ is a geodetic Cayley graph of diameter two and $\abs{G}$ is even.
 Since $\abs{G}$ is even, $G$ has elements of order two.
  Since $\Gamma$ is geodetic and has diameter two, each such element is contained in $\Sigma$ by \Cref{lem:conjugate_ord_two}.
  But then $\Sigma$ contains an entire conjugacy class.
  By \Cref{lem:conjugacyclass}, the graph $\Gamma$ would therefore have to be complete or a cycle, both of which are absurd.
\end{proof}

This affords a short elementary proof of the (well-known) fact that the Moore graphs of diameter two, other than $C_5$, are not Cayley graphs (see \cite[Theorem~3.13]{Cameron99}, \cite{PotonikSV2012,MillerS05}).

\begin{corollary}\label{cor:notCG}
  The Petersen graph, the Hoffman-Singleton graph, and all of the hypothetical Moore graphs of degree $57$ and diameter two are not Cayley graphs.
\end{corollary}
\begin{proof}
As each graph has an even number of vertices and diameter two, this follows from \cref{prop:notCG}.
\end{proof}

In order to prove the next lemma, we recall the following definition.
A graph $\Gamma$ is called \emph{strongly regular} with parameters $(\delta,\lambda,\mu)$ if every vertex has degree $\delta$, any two adjacent vertices share exactly $\lambda$ neighbors, and any two non-adjacent vertices have exactly $\mu$ neighbors in common. A necessary condition for a graph with $v$ vertices to be strongly regular with parameters $(\delta,\lambda,\mu)$ is the equation $
(v-\delta-1)\mu =\delta(\delta-\lambda -1)$.
Clearly, every strongly regular graph with parameter $\mu = 1$ is geodetic and has diameter two, and in this case 
the above condition becomes 
\begin{equation}
    v = \delta(\delta - \lambda) + 1. \label{eq:diam_to_vertex_count}
\end{equation}

\begin{lemma}
  \label{lem:small_order_diameter_two_implies_c5}
  If $\Gamma = \mathrm{Cay}(G, \Sigma)$ is geodetic and has diameter two and $\abs{G} < 2025$, then $\Gamma$ is the cycle $C_5$.
\end{lemma}

\begin{proof}
From \cite{stemple1974geodetic,Kantor77} (see also \cite[Theorem 1]{blokhuis1988geodetic}), we have that every geodetic graph of diameter two falls into one of the following classes: block graphs joining all cliques in one vertex, biconnected graphs with exactly two different vertex degrees, and strongly regular graphs with parameter $\mu = 1$.

Among these, the strongly regular graphs are the only regular graphs.
If the parameter $\lambda$ is zero, then $\Gamma$ is a Moore graph.
The Moore graphs of diameter two are the cycle $C_5$, the Petersen graph, the Hoffman-Singleton graph and hypothetical graphs of degree $57$~\cite{BannaiI73,Damerell73}.
Hence, by \cref{cor:notCG}, only $C_5$ remains.

Deutsch and Fischer~\cite{DeutschF01} showed that if $\lambda > 0$, then we have $\lambda > 1$ and either $(\delta,\lambda) = (21,2)$ or 
\begin{equation}
  \delta \geq (\lambda + 1) (\lambda + 13)
  \label{eq:DeutschFischer}
\end{equation}
\cite[Theorem 4.1 and Corollary]{DeutschF01}.
By the Handshake Lemma, an odd degree is only possible for a group of even order and hence is excluded by \cref{prop:notCG}. This excludes the case $(21, 2)$.
For the second case, if $\lambda \geq 3$, then Equations~\eqref{eq:diam_to_vertex_count} and~\eqref{eq:DeutschFischer} yield $\abs{G} = \delta (\delta - \lambda) +1\geq \lambda^4 + 27 \lambda^3 + 208 \lambda^2 + 351 \lambda + 170 \geq 3905$.
For $\lambda = 2$ we obtain $\delta \geq 45$. 
Again, $\delta$ must be even; hence, $\delta \geq 46$ and  $\delta - \lambda \geq 44$.
Applying these bounds to Equation~\eqref{eq:diam_to_vertex_count}, we 
obtain $\abs{G} = \delta(\delta-\lambda)+1 \geq 46\cdot 44 + 1=2025$.
\end{proof}

We note that an alternative proof of 
\cref{lem:small_order_diameter_two_implies_c5} with a bound of 1300 has been provided to us by Filippo Prandina, which relies on a database by Brouwer\footnote{ \url{https://aeb.win.tue.nl/graphs/srg/srgtab.html}} of strongly regular graphs.

\subsection{Central elements}\label{subsec:center}

\begin{lemma}
  \label{lem:commgen_square}
  Let $\mathrm{Cay}(G, \Sigma)$ be geodetic and $b, t \in \Sigma$.
  Suppose that $b^2 \neq 1$, $t^2 = 1$, and $b^2t = tb^{\pm2}$.
  Then the subgroup $\langle b, t \rangle \le G$ is complete with respect to $\Sigma$.
\end{lemma}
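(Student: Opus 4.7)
The plan is to first force $b^2 \in \Sigma$ and then derive that $\langle b, t\rangle$ is a complete subgroup as the join of two complete subgroups via \Cref{lem:complete_subgroup_join}.

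For the first step, the relation $b^2 t = t b^{\pm 2}$ gives two distinct length-$3$ paths from $1$ to $b^2 t$, namely $(b, b, t)$ and $(t, b^{\pm 1}, b^{\pm 1})$; geodeticity therefore forces $d(1, b^2 t) \le 2$. A short case split handles the easy subcases: if $b^2 t = 1$ then $b^2 = t \in \Sigma$; if $b^2 t \in \Sigma$, setting $a = b^2 t$ gives two distinct length-$2$ paths $(b, b)$ and $(a, t)$ to $b^2$, forcing $b^2 \in \Sigma$. In the remaining subcase, $b^2 t$ has geodesic length exactly $2$ and both length-$3$ paths above must be non-geodesics; enumerating the possible shortcuts between consecutive edges yields either $b^2 \in \Sigma$, or (if $b^2 t = t b^2$) both $bt$ and $tb$ lie in $\Sigma$, or (if $b^2 t = t b^{-2}$) both $bt$ and $tb^{-1}$ lie in $\Sigma$. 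Running the same argument on $b^{-2} t$ in the latter situation additionally yields $b^{-1}t, tb \in \Sigma$. The products $(bt)(tb) = b^2$ or $(tb)(bt) = b^{-2}$ then supply a second length-$2$ path to $b^{\pm 2}$ distinct from $(b^{\pm 1}, b^{\pm 1})$, and geodeticity once more forces $b^2 \in \Sigma$.

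For the second step, $b$ and $b^2$ are commuting elements of $\Sigma$ with $b \neq (b^2)^{\pm 1}$ as soon as $\ord(b) \geq 4$ (the orders $\leq 3$ being degenerate), so \Cref{cor:commgen_subgroup} makes $\langle b \rangle$ a complete subgroup. Set $H = \langle b^2, t\rangle$. If $b^2$ and $t$ commute (which covers $b^2 t = t b^2$ and the corner case $\ord(b) = 4$ of $b^2 t = t b^{-2}$) then $H$ is complete either by \Cref{cor:commgen_subgroup} or, in the degenerate case $b^2 = t$, trivially. Otherwise the $H$-conjugacy class of $b^2$ is precisely $\{b^{\pm 2}\} \sse \Sigma$ and $H = \langle H \cap \Sigma\rangle$, so \Cref{lem:conjugacyclass} applied with $x = b^2$ forces $H$ to be complete (the alternative $H \cap \Sigma = \{b^{\pm 2}\}$ being ruled out by $t \in H \cap \Sigma$ with $t \neq b^{\pm 2}$). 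Since $\langle b\rangle \cap H \supseteq \langle b^2 \rangle \neq 1$, \Cref{lem:complete_subgroup_join} yields that $\langle b, t\rangle$ is complete.

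The main obstacle is the careful enumeration in Step~1: ensuring that the supposedly ``distinct'' length-$2$ paths used in the geodeticity arguments really are distinct. The required non-collisions (such as $a \neq b$ and $tb \neq b^{-1}$) all reduce to equalities that would force $b^2 = 1$ or $b^{\pm 2} = t$, and hence can be disposed of using the hypotheses $b^2 \neq 1$ and $t^2 = 1$.
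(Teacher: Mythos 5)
There is a genuine gap in Step 1, and it occurs in precisely the case that carries all the weight. In the subcase where $b^2t$ has geodesic length exactly two, you assert that because the length-three paths $b\cdot b\cdot t$ and $t\cdot b^{\pm1}\cdot b^{\pm1}$ are non-geodesics, one of the ``shortcuts between consecutive edges'' must exist, i.e.\ $b^2\in\Sigma$ or $bt\in\Sigma$ (and similarly $tb^{\pm1}\in\Sigma$ from the second path). That principle is false in geodetic graphs: a path $v_0,v_1,v_2,v_3$ with $d(v_0,v_3)=2$ need not satisfy $d(v_0,v_2)=1$ or $d(v_1,v_3)=1$. For instance, travelling three steps around a $5$-cycle in $C_5$ or in the Petersen graph reaches a vertex at distance two with no consecutive-edge shortcut anywhere along the path. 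Nothing in your argument rules out the analogous configuration here, namely $d(1,b^2)=d(1,bt)=d(1,tb^{\pm1})=2$ with $\mathrm{geod}(b^2t)$ starting at some third generator $c\notin\{b,t\}$; \Cref{lem:diamonds} does not apply because $bt$ and $b^2$ are not known to be generators, which is exactly what you are trying to prove. Notably, your proof never makes essential use of the hypothesis $t^2=1$, which is the engine of the paper's argument: there one sets $s\coloneqq btb^{-1}=b^{-1}tb$ (or $btb=b^{-1}tb^{-1}$ in the $b^{-2}$ case), observes that $s$ has two distinct length-three representatives and hence length at most two, and then invokes \Cref{lem:conjugate_ord_two} -- an order-two element must have a geodesic of \emph{odd} length -- to force $s\in\Sigma$. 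From $s\in\Sigma$ the memberships $bt,tb\in\Sigma$ (and then $b^2=(bt)(tb)\in\Sigma$, if you want it) do follow by \Cref{lem:diamonds}. So your target conclusion for Step 1 is in fact true, but the route to it requires this parity argument, which is absent from your proof.

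Your Step 2 is fine: once $b^2\in\Sigma$ is available, the degenerate orders $\ord(b)\le 3$ are handled correctly, $\langle b\rangle$ is complete by \Cref{cor:commgen_subgroup}, the subgroup $\langle b^2,t\rangle$ is complete either by \Cref{cor:commgen_subgroup} or by applying \Cref{lem:conjugacyclass} to the class $(b^2)^{\langle b^2,t\rangle}=\{b^{\pm2}\}$, and \Cref{lem:complete_subgroup_join} finishes. This is a genuinely different (and somewhat cleaner) endgame than the paper's, which instead applies \Cref{lem:conjugacyclass} to the two-element class $(st)^{\langle b,t,s\rangle}=\{st,ts\}$. If you insert the order-two argument for $s$ at the start of Step 1, your proof closes.
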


\begin{proof}
  If $bt = tb^{\pm1}$, then the statement follows from \Cref{lem:conjugacyclass}, so assume that $bt \neq tb^{\pm1}$ holds.
  As $bbt = tbb$ or $bbt = t\overline b \overline b$, we have $s \coloneqq b t \overline b = \overline b t b$ or $s \coloneqq btb = \overline b t \overline b$, respectively.
  In particular, there are two words of length three representing $s$; thus there must be a shorter one.
  Moreover, since $s$ has order two, its geodesic must have odd length.
  Hence, $s \in \Sigma$.
  The situation is as shown in \Cref{fig:commgen_square}.
  
\begin{figure}[h]
  \centering
  \begin{center}
    \begin{tikzpicture}[rotate=-45, scale=1.2]
      \foreach \x/\y in {0/0, 0/1.25, 0/2.5, 1.25/0, 1.25/1.25, 1.25/2.5, 1.25/3.75, 2.5/1.25, 2.5/2.5, 2.5/3.75}
        \fill (\x,\y) circle (1.25pt);
      \begin{scope}[shorten <= 3pt, shorten >= 3pt]
        \foreach \x/\y/\l in {0/0/t, 0/1.25/s, 0/2.5/t, 1.25/1.25/t, 1.25/2.5/s, 1.25/3.75/t}
            \draw (\x,\y) -- node[below left, inner sep=1pt] {$\l$} +(1.25,0);
      \end{scope}
      \begin{scope}[shorten <= 3pt, shorten >= 3pt, ->]
        \foreach \x/\y in {0/0, 0/1.25, 1.25/0, 1.25/1.25, 1.25/2.5, 2.5/1.25, 2.5/2.5}
            \draw (\x,\y) -- node[above left, inner sep=1pt] {$b$} +(0,1.25);
      \end{scope}
    \end{tikzpicture}
    \hfil
    \begin{tikzpicture}[rotate=-45, scale=1.2]
      \foreach \x/\y in {0/0, 0/1.25, 0/2.5, 1.25/0, 1.25/1.25, 1.25/2.5, 1.25/3.75, 2.5/1.25, 2.5/2.5, 2.5/3.75}
        \fill (\x,\y) circle (1.25pt);
      \begin{scope}[shorten <= 3pt, shorten >= 3pt]
        \foreach \x/\y/\l in {0/0/t, 0/1.25/s, 0/2.5/t, 1.25/1.25/t, 1.25/2.5/s, 1.25/3.75/t}
            \draw (\x,\y) -- node[below left, inner sep=1pt] {$\l$} +(1.25,0);
      \end{scope}
      \begin{scope}[shorten <= 3pt, shorten >= 3pt]
        \foreach \x/\y/\d in {0/0/->, 0/1.25/->, 1.25/0/<-, 1.25/1.25/<-, 1.25/2.5/<-, 2.5/1.25/->, 2.5/2.5/->}
            \draw[\d] (\x,\y) -- node[above left, inner sep=1pt] {$b$} +(0,1.25);
      \end{scope}
    \end{tikzpicture}
  \end{center}
    \caption{The situation in the proof of \Cref{lem:commgen_square}. 
    Note that $s,t$ have order two, so edges labeled $s,t$  are drawn as undirected.}
  \label{fig:commgen_square}
\end{figure}

  Let $H \coloneqq \langle b, t, s \rangle =\gen{H\cap \Sigma}\leq G$.
  One may verify by straightforward computations (conjugating $st$ by $b,\bar b, s, t$) that $(st)^H = \{st, ts\} \sse \Sigma$.
  The statement then follows using \Cref{lem:conjugacyclass}.
\end{proof}

\begin{lemma}
  \label{lem:commgen_square_pt2}
  Let $\mathrm{Cay}(G, \Sigma)$ be geodetic.
  If there exists some $b \in \Sigma$ with $b^2 \neq 1$ and $(b^2)^G \sse \{ b^{\pm2} \}$, then $\mathrm{Cay}(G, \Sigma)$ is an odd cycle or a complete graph.
\end{lemma}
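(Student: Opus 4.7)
The plan is to split on the parity of $\abs{G}$, and in each case to force either a central element into $\Sigma$ or the full $G$-conjugacy class of $b^2$ into $\Sigma$. Once we have either, \Cref{cor:center}, \Cref{lem:conjugacyclass}, and \Cref{cor:cyclic} together close the argument.

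If $\abs{G}$ is even, Cauchy's theorem produces an element of order two in $G$, and \Cref{lem:conjugate_ord_two} then yields some $t\in\Sigma$ with $t^2=1$. The hypothesis $(b^2)^G\sse\{b^{\pm2}\}$ applied to $t$ reads $b^2t=tb^{\pm2}$, so \Cref{lem:commgen_square} shows that $\langle b,t\rangle$ is complete with respect to $\Sigma$; in particular $b^2\in\Sigma$, whence $(b^2)^G\sse\Sigma$. Applying \Cref{lem:conjugacyclass} with $H=G$ and $x=b^2$ then forces either $\Sigma=\{b^{\pm2}\}$, in which case $G=\langle b^2\rangle$ is cyclic and we finish with \Cref{cor:cyclic}, or $G$ itself is a complete subgroup and $\Cay(G,\Sigma)$ is a complete graph.

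If $\abs{G}$ is odd, the previous route is blocked because no element of $\Sigma$ can have order two. I would instead define $\phi\colon G\to\{\pm1\}$ by setting $\phi(g)=1$ when $g^{-1}b^2g=b^2$ and $\phi(g)=-1$ otherwise. The hypothesis on $(b^2)^G$ makes $\phi$ well-defined, and a short case distinction on the signs of $\phi(g)$ and $\phi(h)$ shows $\phi(gh)=\phi(g)\phi(h)$. Since $\abs{G}$ is odd, $\phi$ must be trivial, so $b^2\in Z(G)$. Now $\ord(b)$ divides $\abs{G}$ and is hence odd, so $2$ is invertible modulo $\ord(b)$; this yields $\langle b\rangle=\langle b^2\rangle\sse Z(G)$, and in particular $b\in Z(G)\cap\Sigma$, so \Cref{cor:center} finishes the case.

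The main obstacle is the odd-order regime, where the order-two techniques underpinning \Cref{lem:commgen_square} are unavailable; the sign homomorphism $\phi$ substitutes for them, and the elementary remark that odd order of $b$ forces $\langle b\rangle=\langle b^2\rangle$ is what upgrades $b^2\in Z(G)$ to the needed $b\in Z(G)$.
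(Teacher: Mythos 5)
Your proof is correct and takes essentially the same approach as the paper: the even-order case uses exactly the paper's chain of \Cref{lem:conjugate_ord_two}, \Cref{lem:commgen_square} and \Cref{lem:conjugacyclass}, while your odd-order case (the sign homomorphism forcing $b^2\in Z(G)$, then $\langle b\rangle=\langle b^2\rangle$ forcing $b\in Z(G)$) is the contrapositive of the paper's observation that $b^2\in Z(G)$ with $b\notin Z(G)$, or $\abs{(b^2)^G}=2$, makes $\abs{G}$ even. The only real difference is that you split on the parity of $\abs{G}$ where the paper splits on the size of the conjugacy class $(b^2)^G$.
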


\begin{proof}
  Suppose first, that $(b^2)^G = \{ b^2 \}$ and thus $b^2 \in Z(G)$.
  Then either $b \in Z(G)$ or $b \not\in Z(G)$.
  In the first case, the statement follows immediately from \Cref{cor:center}.
  In the second case, $\abs{G : Z(G)}$ is even; hence so is $\abs{G}$.
  As such, there exists an element of order two in $G$.
  In fact, there even exists $t \in \Sigma$ with $t^2 = 1$ by \Cref{lem:conjugate_ord_two}.
  We then obtain $(b^2)^G \subseteq \Sigma$ from \Cref{lem:commgen_square}.
  The statement now follows from \Cref{cor:center}.
  
  Finally, suppose that $(b^2)^G = \{ b^{\pm2} \}$ has two elements.
  Then $\abs{G : C_G(b^2)} = 2$ and, as before, there thus exists some $t \in \Sigma$ with $t^2 = 1$. 
  The statement follows from \Cref{lem:commgen_square} and \Cref{lem:conjugacyclass}.
\end{proof}

\begin{lemma}[Flat coset]
  \label{lem:commgen_coset}
  Let $G$ be a group with generating set $\Sigma$ such that the Cayley graph is geodetic.
  If  $w \in \Sigma^{m}$ is a geodesic of length $m$ and $a \in \Sigma$ such that $aw = wa$ and $w \neq a^{\pm m}$, 
  then each $ h \in w \langle a \rangle$ has a geodesic of length at most $m$.
  Moreover, if $ w \not\in  \langle a \rangle$, then each $ h \in w \langle a \rangle$ has a geodesic of length exactly $m$.
\end{lemma}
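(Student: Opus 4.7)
The plan is to prove the lemma by induction on $|k|$, driven by a single ``distinct paths'' geodetic argument serving as both the base case and the inductive step. The base observation is that, under the hypotheses, $d(1, aw) \le m$: writing $w = b_1 \cdots b_m$, the two edge-sequences $(a, b_1, \ldots, b_m)$ and $(b_1, \ldots, b_m, a)$ both represent $aw = wa$ and have length $m + 1$. They are distinct because the assumption $w \neq a^m$ as a group element forces $w \not\equiv a^m$ as a word (by uniqueness of the geodesic for $a^m$), so geodeticity gives $d(1, aw) < m + 1$. The analogue for $\bar{a}w$ follows symmetrically.

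For the inductive step, assume $d(1, wa^k) \le m$ with geodesic $u$. Then $ua$ and $au$ are two length-$(|u|+1)$ edge-sequences for $wa^{k+1}$; if they are distinct, geodeticity yields $d(1, wa^{k+1}) \le |u| \le m$. The only obstruction is $u \equiv a^{|u|}$ as a word, which forces $wa^k = a^{|u|}$ and hence $w \in \langle a \rangle$. When $w \notin \langle a \rangle$, the iteration runs unobstructed in both directions (the $\bar{a}$ case being symmetric), giving the first assertion. The matching lower bound $d(1, wa^{k+1}) \ge m$ comes from the same trick in reverse: a geodesic $v$ for $wa^{k+1}$ with $|v| < m$ would yield two length-$(|v|+1) \le m$ paths $v\bar{a}, \bar{a}v$ for the $wa^k$-geodesic $u$; distinctness contradicts the induction hypothesis $d(1,wa^k)=m$, while coincidence ($v \equiv \bar{a}^{|v|}$) forces $w \in \langle a \rangle$. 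This proves the second (exact-length) assertion.

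It remains to establish the first assertion when $w = a^{j_w} \in \langle a \rangle$. Uniqueness of geodesics combined with $w \neq a^{\pm m}$ forces $m + 1 \le j_w \le n - m - 1$, where $n = \operatorname{ord}(a)$: otherwise the shorter of the words $a^{j_w}$ and $\bar{a}^{n - j_w}$ would itself be the geodesic of $w$, yielding $w = a^{\pm m}$. Every $a^e$ with $e \in [0, m] \cup [n - m, n - 1]$ is within distance $m$ via a power-of-$a$ word. For $e$ in the inner range $[m + 1, n - m - 1]$, I re-run the iteration from the previous paragraph starting at $w$, forward with $a$ and backward with $\bar{a}$; in the inner range the geodesic of $a^e$ cannot be $a^{|u|}$ or $\bar{a}^{|u|}$ (those words exceed length $m$), so the distinct-paths obstruction never triggers. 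A direct interval calculation shows the forward iteration sweeps exponents $[j_w, n - m - 1]$ and the backward one sweeps $[m + 1, j_w]$, together covering the inner range.

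The main obstacle is this last step: in the case $w \in \langle a \rangle$ the iteration can only propagate inward from $j_w$, so one must check via the interval calculation that the forward and backward branches jointly exhaust $\langle a \rangle$. This works precisely because $j_w$ lies in the strict interior $[m+1, n-m-1]$, ensuring both branches begin inside the inner range and that the shortcut-driven induction survives until it meets the endpoints, where the power-of-$a$ words take over.
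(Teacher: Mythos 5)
Your proof is correct and follows essentially the same strategy as the paper's: the two-words trick with $au$ and $ua$ forcing a shorter geodesic unless the current geodesic is a power of $a$, the same case split on $w \in \langle a \rangle$, and the same inner/outer exponent-range analysis when $w$ is a power of $a$. The only (cosmetic) difference is that the paper obtains the exact-length claim from the cyclic chain $m = |w_r| \le \dots \le |w_0| = m$, whereas you run a separate reverse induction for the lower bound; both are valid.
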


\begin{proof}
  We denote by $w_i$ the geodesic of $w a^i$.
  Let $r$ be the order of $a$ in $G$.
  Observe that $w_0 = w_r$.
  
  First, we prove the lemma for the case $w \not\in  \langle a \rangle$.
  In this case, $a w_i$ and $w_i a$ are two different words, as otherwise $w \in \langle a \rangle$.
  Both $a w_i$ and $w_i a$ are of length $|w_i| + 1$ representing $w a^{i+1}$.
  Thus, there must be a shorter word, implying that $| w_{i+1}| \le |w_i|$.
  Therefore, $m = |w_r| \le \dots \le |w_i| \le \dots \le |w_0| = m$, and we conclude $|w_i| = m$ for all $i$.
  
  Second, we consider the case $w = a^k$.
  The proof of this case is illustrated in \Cref{fig:coset_wg}.
  Recall that $w_i$ is the geodesic for $wa^i = a^{i+k}$.
  Observe that, because the geodesic of $a^k$ has length $m$, we must have $m \le k$.
  In fact, because $w \neq a^{\pm m}$, we have $m < k$.
  Using the same argument, we also have $m < r - k$.
  We prove by induction on $i$ that $|w_i| \le m$.
  For $w_0 = w_r = w$ this obviously holds.

  If $0 < i \le r-k$, then we have two words, $w_{i-1} a$ and $a w_{i-1}$ of length $|w_{i-1}| + 1$ for $a^{i+k} = w_i$.
  To prove that the two words are different, we show $w_{i-1} \neq a^{|w_{i-1}|}$:
  If $w_{i-1}$ were a power of $a$, then $w_{i-1} = a^{i + k - 1}$.
  That is impossible, as $|w_{i-1}| \le m < k \le i + k - 1$.
  Thus, the geodesic $w_{i}$ has length at most $|w_{i-1}| \leq m$.

  For $r-k < i < r$, we also use induction on decreasing values of $i$, multiplying with $\overline{a}$.
  Now $w_r = w$ is the base case for the induction.
  Assuming the statement holds for $i+1$, the two words $w_{i+1} \overline{a}$ and $\overline{a} w_{i+1}$ both correspond to the group element $a^{i + k} = w_i$.
  To prove that the two words are different, we show $w_{i+1} \neq {\ov a}^{|w_{i+1}|}$:
  If $w_{i+1}$ were a power of $\ov a$, then $w_{i+1} = {\ov a}^{2r- k - i - 1}$ leading to the contradiction $2r - k - i - 1 \ge r - k > m \ge |w_{i+1}|$.
  Therefore, we must have $|w_i| \leq |w_{i+1}| \leq m$.
\end{proof}

\begin{figure}[h]
  \centering
  \begin{tikzpicture}[scale=1.2]
    \foreach \a/\n in {90, 126, ..., 414} {
        \fill (\a:2) circle (1.25pt);
    }

    \begin{scope}[shorten <=3pt, shorten >=3pt, ->]
        \draw (90:2) 
            arc[start angle=90, end angle=126, radius=2]
            node[pos=.6, below right] {$\bar a\mathstrut$};
        \draw (162:2) 
            arc[start angle=162, end angle=198, radius=2]
            node[pos=.5, right] {$\bar a\mathstrut$};
        \draw (234:2) 
            arc[start angle=234, end angle=270, radius=2]
            node[pos=.4, above right] {$\bar a\mathstrut$};

        \draw (90:2) 
            arc[start angle=90, end angle=54, radius=2]
            node[pos=.6, below left] {$a\mathstrut$};
        \draw (18:2) 
            arc[start angle=18, end angle=-18, radius=2]
            node[pos=.5, left] {$a\mathstrut$};
        \draw (-54:2) 
            arc[start angle=-54, end angle=-90, radius=2]
            node[pos=.4, above left] {$a\mathstrut$};
    \end{scope}

    \begin{scope}[shorten <=3.5pt, shorten >=3pt, <-, 
        dash pattern=on 9.5pt off 2pt on 3pt off 2pt on 3pt off 2pt on 3pt off 2pt on 3pt off 2pt on 3pt]
        \foreach \ea/\sa in {126/162, 198/234, 54/18, -18/-54}
            \draw (\sa:2) arc[start angle=\sa, end angle=\ea, radius=2];
    \end{scope}

    \node[anchor=south, shift={(0,+.15)}] at (+90:2) 
        {$w_0 = a^k$};
    \node[anchor=north, shift={(0,-.15)}] at (-90:2) 
        {$w_{r-k} = 1$};

    \node[anchor=east, shift={(-.13,0)}] at (126:2) 
        {$w_{r-1} = a^{k-1}$};
    \node[anchor=east, shift={(-.13,0)}] at (162:2) 
        {$w_{r-k+d+1} = a^{d+1}$};
    \node[anchor=east, shift={(-.20,0)}] at (198:2) 
        {$w_{r-k+d} \equiv a^d$};
    \node[anchor=east, shift={(-.27,0)}] at (234:2) 
        {$w_{r-k+1} \equiv a^1$};

    \node[anchor=west, shift={(+.29,0)}] at ( 54:2) 
        {$w_1 = a^{k+1}$};
    \node[anchor=west, shift={(+.22,0)}] at ( 18:2) 
        {$w_{r-k-d-1} = \bar a^{d+1}$};
    \node[anchor=west, shift={(+.22,0)}] at (-18:2) 
        {$ w_{r-k-d} \equiv \bar a^d$};
    \node[anchor=west, shift={(+.29,0)}] at (-54:2) 
        {$w_{r-k-1} \equiv \bar a^1$};
  \end{tikzpicture} 

  \caption{The coset $w\langle a \rangle$ in the case $w = a^k$ in the proof of \Cref{lem:commgen_coset}.}
  \label{fig:coset_wg}
\end{figure}

The second case of the proof, where $w = a^k$ is illustrated in \Cref{fig:coset_wg}.
One can observe the two inductions, starting at $w$, one multiplying with $a$, the other multiplying with $\overline{a}$ covering the entire subgroup.
Before arriving at $1$ we encounter small powers of $a$, respectively $\overline{a}$.
In fact, there is an integer $d \le |w|$, such that $a^i$ and $\overline{a}^i$ are geodesic for $0 \le i \le d$, but for $j > d$ the words $a^j$ and ${\ov a}^j$ are not geodesics.
By applying \Cref{lem:commgen_coset} to the geodesic of $a^{d+1}$, which has length $d$, we obtain $d = m$. 
This proves the following.

\begin{corollary}
    Let $\Cay(G, \Sigma)$ be a counterexample to \Cref{conj:main} and let $a \in \Sigma$.
    Then there is an integer $m \in \mathbb{N}$ such that $a^{\pm i}$ is a geodesic if $i \le m$ and each element in the set $\langle a \rangle \setminus \{ a^{\pm i} \mid 0 \le i \le m\}$ has a geodesic of length exactly $m$.
\end{corollary}

\begin{lemma}[Geodesics for central elements are powers of generators]
  \label{cor:centerelementispower}
  Let $G$ be a group with geodetic Cayley graph $\Cay(G, \Sigma)$, and let $z \in Z(G) \setminus \{1\}$.
  Then the unique geodesic of $z$ is $a^k$ for some $a \in \Sigma$ and~$k \ge 1$.
\end{lemma}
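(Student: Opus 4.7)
Let $w \equiv w_1 w_2 \cdots w_m$ be the unique geodesic of $z$ in $\Sigma^\ast$, where $w_i \in \Sigma$. Since $z \neq 1$ we have $m \geq 1$. The key idea is to exploit centrality of $z$ to produce a \emph{cyclic rotation} of $w$ that also represents $z$ as a word of length $m$, and then to invoke uniqueness of geodesics.

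First I would observe that the geodesic of $z w_1^{-1}$ has length exactly $m-1$: it is at most $m-1$ because $z w_1^{-1} = w_1^{-1} w_1 w_2 \cdots w_m = w_2 \cdots w_m$, and it cannot be smaller, since otherwise concatenating $w_1$ on the right would produce a representation of $z$ of length less than $m$, contradicting geodeticity of $w$. Thus $w_2 \cdots w_m$ is a geodesic for $z w_1^{-1}$.

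Next I would use that $z \in Z(G)$ to write
\[
z = w_1^{-1} z w_1 = w_1^{-1} (w_1 w_2 \cdots w_m) w_1 = w_2 w_3 \cdots w_m w_1.
\]
This is a word of length $m$ in $\Sigma^\ast$ representing $z$, hence a geodesic for $z$. Since the Cayley graph $\Cay(G,\Sigma)$ is geodetic, $z$ admits a \emph{unique} geodesic, so as words we must have
\[
w_1 w_2 \cdots w_m \;\equiv\; w_2 w_3 \cdots w_m w_1.
\]
Comparing letter by letter yields $w_1 = w_2$, $w_2 = w_3$, \dots, $w_{m-1} = w_m$. Setting $a \coloneqq w_1 \in \Sigma$ and $k \coloneqq m \geq 1$ we conclude $w \equiv a^k$, as claimed.

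There is essentially no obstacle here: the argument uses only centrality (to rotate the geodesic cyclically) and uniqueness of geodesics (the defining property of a geodetic Cayley graph). The only small subtlety is verifying that the cyclic rotation really has length $m$ as a word, which is immediate since it is a product of $m$ letters of $\Sigma$ and represents a group element whose geodesic length is $m$.
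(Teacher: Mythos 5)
Your argument is correct and is essentially identical to the paper's proof: both conjugate $z$ by $w_1$ to obtain the cyclic rotation $w_2\cdots w_m w_1$ as a second word of length $m$ representing $z$, and then invoke uniqueness of geodesics to force $w_1 = w_2 = \dots = w_m$. The intermediate observation about $zw_1^{-1}$ is harmless but not needed.
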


\begin{proof}
  Let $w_1\cdots w_k \in \Sigma^*$ be a geodesic for $z$.
  As $z$ is in the center we have
  \begin{equation*}
    w_1 w_2 \cdots w_k = z = z^{w_1}  = w_2 \cdots w_k w_1\text{.}
  \end{equation*}
  Now we have two words of length $k$ representing $z$.
  There cannot be a shorter word, as we chose $w_1\cdots w_k$ to be a geodesic.
  The Cayley graph is geodetic, so the words must coincide:
  we obtain $w_1 = w_2 = \dots = w_k$.
\end{proof}

\begin{theorem}[Geodesics for central elements are powers with the same exponent] \label{thm:center_geodesics}
  Let $G \neq \Triv$
  be a group with geodetic Cayley graph $\Cay(G,\Sigma)$ and $\Sigma \cap Z(G) = \emptyset$.
  Then, there is an integer $k \ge 3$ such that for each $z \in Z(G) \setminus \{1\}$ there exists an element $c_z \in \Sigma$ such that $c_z^k$ is the geodesic for $z$.
  Moreover, $k$ divides $\abs{G : Z(G)}$.
\end{theorem}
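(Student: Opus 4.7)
The plan is to proceed in two stages: first establish the lower bound $k\geq 3$, then show the exponent is uniform across all nontrivial central elements.

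For the lower bound, I would fix any $z\in Z(G)\setminus 1$ and use \Cref{cor:centerelementispower} to write its unique geodesic as $c_z^{k_z}$ with $c_z\in\Sigma$ and $k_z\geq 1$. The case $k_z=1$ is immediately excluded by $\Sigma\cap Z(G)=\emptyset$. If $k_z=2$, then $c_z^2=z\in Z(G)$ gives $(c_z^2)^G=\{c_z^2\}\sse\{c_z^{\pm 2}\}$, so \Cref{lem:commgen_square_pt2} forces $\mathrm{Cay}(G,\Sigma)$ to be either complete or an odd cycle. A complete Cayley graph satisfies $Z(G)\setminus 1\sse\Sigma$, and an odd cycle makes $G$ cyclic so that $Z(G)=G$ and hence $\Sigma\sse Z(G)$; both contradict the standing hypothesis.

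For uniformity, take $z_1,z_2\in Z(G)\setminus 1$ with geodesics $c_1^{k_1}$ and $c_2^{k_2}$, and compare the geodesic length of $z_1z_2$ via two applications of the flat coset \Cref{lem:commgen_coset}. If $z_1z_2=1$, then $z_2=z_1^{-1}$ has geodesic $\overline{c_1}^{k_1}$ and uniqueness yields $k_1=k_2$. Otherwise, in the principal configuration $c_1\neq c_2^{\pm 1}$ together with $z_i\notin\langle c_{3-i}\rangle$ for $i=1,2$, the hypotheses of \Cref{lem:commgen_coset} hold on both sides: commutativity $c_i\cdot z_{3-i}=z_{3-i}\cdot c_i$ is immediate from centrality, and the inequality $c_i^{k_i}\neq c_{3-i}^{\pm k_i}$ follows from uniqueness of the geodesic of $z_i$ combined with $c_1\neq c_2^{\pm 1}$. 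The moreover clause then places $z_1z_2$ simultaneously in $z_1\langle c_2\rangle$ with geodesic length exactly $k_1$ and in $z_2\langle c_1\rangle$ with geodesic length exactly $k_2$, forcing $k_1=k_2$.

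The main obstacle lies in the degenerate configurations that escape this principal case. When $c_1=c_2^{\pm 1}$, I would treat $z_1,z_2$ as powers of a common generator $c$ and run a Euclidean descent on exponents: successive differences $z_1z_2^{-n}$ remain central, so by \Cref{cor:centerelementispower} each nontrivial one has geodesic of length $\geq 3$ (Stage~1), and these lower bounds together with the upper bounds coming from the obvious word in the powers of $c$ force $k_1=k_2$. When instead one of the containments $z_i\in\langle c_{3-i}\rangle$ holds with $c_1\neq c_2^{\pm 1}$, the moreover clause fails on that side; here \Cref{cor:commgen_subgroup} rules out having $c_1,c_2$ commute (otherwise $\langle c_1,c_2\rangle$ would be a complete subgroup containing $z_1$, contradicting $\Sigma\cap Z(G)=\emptyset$), and a careful case split combining the weaker first clause of \Cref{lem:commgen_coset} with the moreover clause on the opposite side, and if necessary passing to a third generator $c_3\in\Sigma\setminus(\langle c_1\rangle\cup\langle c_2\rangle)$ (such a $c_3$ exists unless $G$ is cyclic, which is excluded by $\Sigma\cap Z(G)=\emptyset$), reduces back to the principal case and yields $k_1=k_2$.
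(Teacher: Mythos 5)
Your Stage~1 (the bound $k \ge 3$ via \Cref{cor:centerelementispower} and \Cref{lem:commgen_square_pt2}) and your ``principal configuration'' in Stage~2 are both correct; the latter is a nice symmetric use of the \emph{moreover} clause of \Cref{lem:commgen_coset}, slightly cleaner than the paper's two-step ``length at most'' chaining. The genuine gap is the degenerate case $c_1 = c_2^{\pm1}$, i.e.\ two non-inverse central elements whose geodesics are powers of the \emph{same} generator $c$. Your proposed Euclidean descent does not force $k_1 = k_2$ there: take the hypothetical configuration $z_1 = c^3$, $z_2 = c^6 = z_1^2$ with both words geodesic. The successive differences $z_2z_1^{-1} = c^3 = z_1$ and $z_2z_1^{-2} = 1$ satisfy every constraint you impose (each nontrivial one is central with geodesic length $\ge 3$ and $\le$ the obvious power of $c$), so the descent terminates with no contradiction while $k_1 = 3 \neq 6 = k_2$. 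Nothing in your argument rules this configuration out, and it is exactly the configuration that must be excluded.

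The paper handles this by showing the case simply cannot occur: taking $k$ minimal with $b^k$ a central geodesic and any $g \in \Sigma \setminus \{b^{\pm1}\}$, the flat-coset lemma gives geodesics of length at most $k$ for $b^kg$ and $b^k\overline{g}$, whose concatenation is a word of length at most $2k$ for the central element $b^{2k}$; this forces $b^{2k}$ (and hence every power $b^{k+i}$ with $i\ge k$, by the prefix/minimality argument) not to be a geodesic, so each generator carries at most one pair $\{z, z^{-1}\}$ of central geodesics. Some such argument is indispensable and is the real content of the theorem beyond \Cref{cor:centerelementispower}; your proposal is missing it. Your second degenerate case ($z_i \in \langle c_{3-i}\rangle$ with $c_1 \neq c_2^{\pm1}$) is likewise only gestured at (``a careful case split \dots reduces back to the principal case''), though the observation that commuting $c_1, c_2$ are excluded by \Cref{cor:commgen_subgroup} is correct as far as it goes. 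As written, the proof is incomplete.
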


In light of this theorem, we make the following definitions for a non-cyclic geodetic Cayley graph $\Cay(G, \Sigma)$ with $G \neq \Triv$ and $\Sigma \cap Z(G) = \emptyset$. If $k\geq 3, z\in Z(G) \setminus \{1\}$ and $c_z\in \Sigma$ are as in  \cref{thm:center_geodesics},
\begin{enumerate}
    \item $c_z$ is called a \emph{central root}
    \item $k$ is called the \emph{length of central geodesics}
    \item a geodesic between two central elements is called a \emph{central geodesic}, and is necessarily labeled by $c_z^k$ for some central root $c_z$.
\end{enumerate}
Observe that if $\Sigma \cap Z(G) \neq \emptyset$, then $k = 1$, as in this case the Cayley graph is complete by \Cref{cor:center}.

\begin{proof}
  We know by \Cref{cor:centerelementispower} that the geodesic of every element in the center is a power of a generator.
  We first show that no two central elements can have a geodesic that is a power of the same generator (note that a generator and its inverse is allowed and will happen).
  Let $b^k$ be a geodesic representing a central element and assume that $k$ is minimal so that $b^k$ is central.
  Let $g$ be a generator different to $b^{\pm 1}$ (such a generator must exist, otherwise $G$ would be a cyclic group, but then $b \in Z(G)$).
  By~\Cref{lem:commgen_coset} the elements $b^k g$ and $b^k \overline{g}$ both have a geodesic of length at most $k$, which we write as $(b^k g)$ and $(b^k \overline{g})$.
  Now $(b^k g)(b^k \overline{g})$ is a word of length at most $2k$ representing the central element $b^{2k}$.
  Thus, $b^{2k}$ is not a geodesic.
  Assume there is some $i \ge 1$ such that $b^{k+i}$ is a geodesic of a central element.
  This implies that $b^i$ is a geodesic of a central element.
  By minimality of $k$ we conclude $i\geq k$.
  It follows that no power of $b$ other than $b^k$ is a geodesic for a non-trivial central element.
  In particular, the order of $b \cdot Z(G)$ in $G / Z(G)$ is $k$ and, thus, $k$ divides $\abs{G : Z(G)}$.

  Next we show that the geodesics of all non-trivial central elements have the same length.
  For a central element and its inverse this is obvious.
  Assume there are $c, d \in \Sigma$ with $c \notin \{d, \bar{d}\}$ and $k,\ell \geq 1$ such that $c^k$ and $d^\ell$ both are geodesics representing different central elements.
  Note that $c^k \neq \bar{d}^{\ell}$ because both are geodesics.
  Now assume for a contradiction that $k \neq \ell$, w.\,l.\,o.\,g.\ $k < \ell$.
  \Cref{lem:commgen_coset} tells us that $c^k d^\ell \in c^k \langle d \rangle$ has a geodesic $u$ of length $\abs{u} = m \le k$.
  But $c^k d^\ell$ is also in the center (and non-trivial), so $u = f^m$ for some $f \in \Sigma$.
  Applying \Cref{lem:commgen_coset} again to $d^\ell = f^m c^{-k} \in f^m \langle c \rangle$, we obtain a geodesic $v$ of length at most $m \le k < \ell$, contradicting $d^\ell$ being a geodesic.

  It remains to show, that no central element has a geodesic of length two.
  In that case the Cayley graph would be cyclic or complete by \Cref{lem:commgen_square_pt2}, contradicting $\Sigma \cap Z(G) = \emptyset$. 
\end{proof}

From \cref{thm:center_geodesics} and \cref{cor:center}, we immediately obtain the following corollary.
\begin{corollary}\label{lem:center_no_diam_two}
If $\Gamma = \mathrm{Cay}(G, \Sigma)$ is geodetic and has diameter two and $Z(G) \neq 1$, then $\Gamma$ is the cycle $C_5$.
\end{corollary}

\begin{lemma}\label{lem:central_length_not_even}
    Let $\Cay(G, \Sigma)$ be geodetic.
    If there is an element $z \in Z(G)$, with $\ord(z) \ge 4$, then the length of central geodesics must be odd.
\end{lemma}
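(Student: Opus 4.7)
Suppose for contradiction that $k$ is even. Since $\operatorname{ord}(z) \geq 4$, the elements $z$ and $z^2$ are distinct non-trivial central elements. By \Cref{thm:center_geodesics}, let $c \in \Sigma$ be the central root of $z$, so that $z = c^k$ is the unique geodesic, and similarly let $d \in \Sigma$ be the central root of $z^2$, so that $z^2 = d^k$ is the unique geodesic. Observe first that $d \notin \{c, c^{-1}\}$: if $d = c$ then $z^2 = c^k = z$ forces $z = 1$, and if $d = c^{-1}$ then $z^2 = c^{-k} = z^{-1}$ forces $z^3 = 1$; both contradict $\operatorname{ord}(z) \geq 4$.

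The key claim I aim to prove is that $c$ and $d$ commute. Once this is established, \Cref{cor:commgen_subgroup} implies that $\langle c, d \rangle$ is a complete subgroup of $G$ with respect to $\Sigma$. Since $z = c^k \in \langle c, d \rangle$ is non-trivial, this gives $z \in \Sigma$, contradicting the hypothesis $\Sigma \cap Z(G) = \emptyset$ under which the length of central geodesics is defined in \Cref{thm:center_geodesics}.

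To prove commutativity, I would consider the conjugate $c d c^{-1}$. Since $z^2 \in Z(G)$, one has $(c d c^{-1})^k = c d^k c^{-1} = c z^2 c^{-1} = z^2$, so $c d c^{-1}$ is another $k$-th root of $z^2$. If $c d c^{-1} \in \Sigma$, then the word $(c d c^{-1})^k$ has length $k$ and represents $z^2$; by uniqueness of the geodesic $d^k$ in a geodetic graph, we conclude $c d c^{-1} \equiv d$ as letters, i.e.\ $[c, d] = 1$, which completes the proof.

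The main obstacle I anticipate is the sub-case $c d c^{-1} \notin \Sigma$, where the direct uniqueness argument above does not apply. The plan is to first rule out that $c d c^{-1}$ admits a length-$2$ geodesic using \Cref{lem:diamonds}: any factorization $c d c^{-1} = a b$ with $a, b \in \Sigma$ yields, through the relation $a b c = c d$, a 4-cycle whose induced completeness would force $c d c^{-1} \in \Sigma$. Hence the geodesic of $c d c^{-1}$ is exactly the length-$3$ word $c d c^{-1}$ itself. From here, one iterates the conjugation $d \mapsto c d c^{-1} \mapsto c^2 d c^{-2} \mapsto \cdots$; each $c^i d c^{-i}$ is a $k$-th root of $z^2$ and the orbit has size dividing $k$. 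Applying \Cref{lem:commgen_coset} to the coset $c^{k/2}\langle d \rangle$ (where $c^{k/2}$ is a genuine midpoint of the geodesic for $z$ precisely because $k$ is even) should then produce two distinct geodesic words for a common element, contradicting the geodetic property and forcing us back to $[c,d] = 1$.
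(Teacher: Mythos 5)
Your setup and the reduction are sound: $c$ and $d$ are well defined by \Cref{thm:center_geodesics}, $d \notin \{c^{\pm1}\}$ follows from $\ord(z)\ge 4$, and if you could show $[c,d]=1$ then \Cref{cor:commgen_subgroup} would indeed put $z=c^k$ into $\Sigma$ and finish the proof. The observation that $cdc^{-1}\in\Sigma$ forces $cdc^{-1}=d$ (since $(cdc^{-1})^k=z^2$ would be a second length-$k$ geodesic for $z^2$) is also correct. The gap is the case $cdc^{-1}\notin\Sigma$, and neither of the two devices you propose for it works. First, a factorization $cdc^{-1}=ab$ gives the relation $abc=cd$, which is an equality between a word of length three and a word of length two — a closed path of length \emph{five} in the Cayley graph, not a $4$-cycle — so \Cref{lem:diamonds} does not apply and nothing forces $cdc^{-1}$ into $\Sigma$. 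Second, \Cref{lem:commgen_coset} applied to the coset $c^{k/2}\langle d\rangle$ requires the hypothesis that $d$ commutes with the geodesic $c^{k/2}$, which is essentially the commutativity you are trying to establish; the application is circular. Note also that the parity of $k$ enters your argument only in this last, unjustified step: if the rest went through it would prove the far stronger claim that a central element of order $\ge 4$ rules out \emph{every} non-complete geodetic Cayley graph, for any $k$ — a statement the paper does not assert and handles only via quantitative bounds elsewhere. This is a strong sign that the approach cannot be completed along these lines without a genuinely new ingredient.

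The paper's proof uses the evenness of $k$ quite differently, through the parity constraint on involutions. After reducing to $m=\ord(z)$ odd, $m\ge 5$ (the even case is dispatched by \Cref{cor:center}), write $z=b^{2k}$ and let $y$ be the length-$2k$ geodesic of $z^{(m-1)/2}$, which is a power of a generator other than $b^{\pm1}$. The element $t=b^{km}=yb^k$ has order two, and \Cref{lem:commgen_coset} (applied with the \emph{central} element $z^{(m-1)/2}$, so commutativity is free) bounds its length by $2k$; but \Cref{lem:conjugate_ord_two} forces the geodesic of an involution to have odd length, hence length at most $2k-1$. Translating back along $\langle b\rangle$ via \Cref{lem:commgen_coset} then gives $\lvert y\rvert \le 2k-1$, a contradiction. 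If you want to salvage your route, you would need to replace the $cdc^{-1}\notin\Sigma$ analysis with an argument of this flavour, in which the even length collides with an odd-length constraint coming from an element of order two.
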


\begin{proof}
    Let $m = \ord(z)$.
    If $m$ is even, then the statement follows from \Cref{cor:evencenter}.
    Thus from here on we assume that $m$ is odd and $m \ge 5$.
    Assume that $b^{2k}$ is the geodesic for $z$.
    Let $y$ be the geodesic for $z^{(m-1)/2}$.
    By \Cref{thm:center_geodesics} it has length $2k$.
    As $m \ge 5$, we have $y \neq b^{\pm 2k}$.
    Note that $t = b^{km} = y b^{k}$.
    By \Cref{lem:commgen_coset}, the element $t \in y \langle b \rangle$ has length at most $|y| = 2k$.
    Clearly $t^2 = b^{2km} = 1$.
    If $t = 1$, then $y = tb^{-k} = \bar{b}^k$, contradicting $2k$ being the length of the central geodesics.
    It follows that $t$ has order two, thus by \Cref{lem:conjugate_ord_two} it must have odd length, that is, the length of $t$ is at most $2k - 1$.
    Applying \Cref{lem:commgen_coset} to $y = t \bar{b}^{k} \in t \langle b \rangle$ we obtain the contradiction $|y| \le |t| = 2k - 1$.
\end{proof}

\section{Bounds on the size of generating sets}\label{sec:bounds}

In this section we establish bounds on the possible sizes of those generating sets which result in a geodetic Cayley graph that is a counterexample to \cref{conj:main}.
To obtain these, we study the structure and size of balls of radius one and two in such a graph, as well as the positional relationships of central geodesics.

\smallbreak

Let $\Cay(G, \Sigma)$ be an arbitrary geodetic Cayley graph.
Throughout, we denote the $r$-ball in $\Cay(G, \Sigma)$ centered at a vertex $g \in G$ by $\cB_r(g) \coloneqq \{ h \in G \,\mid\, d(g,h) \le r \}$.
As Cayley graphs are vertex-transitive, all balls of the same radius $r$ are isomorphic subgraphs of $\Cay(G, \Sigma)$.

We begin by analyzing the structure of one-balls.
By \Cref{lem:partition_neighbors_cliques} the neighbors of any $g \in G$ can be partitioned into a set of disjoint cliques.
We denote by $m$ the number of these disjoint cliques and by $\delta_1 \le \delta_2 \le \dotsc \le \delta_m$ their sizes.
Clearly, $\delta \coloneqq \sum_{i=1}^m \delta_i$ is the degree of $\Cay(G, \Sigma)$ and thus $\delta = \abs{\Sigma}$.

\begin{lemma}\label{lem:cliques_number_and_size}
    Let $\Cay(G, \Sigma)$ be a counterexample to \Cref{conj:main}. 
    Then $m \ge 1 + \delta_m \ge 1 + \delta / m$ and $m \ge 3$.
\end{lemma}

\begin{proof}
  The Cayley graph $\Cay(G, \Sigma)$ contains a clique of size $\delta_m + 1$.
  By~\cite[Corollary 1]{gorovoy2021graphs} the neighborhood of some and, hence, of every vertex contains an independent set of size $\delta_m + 1$.
  Thus $m \ge \delta_m + 1$ by definition of $m$.
  The second inequality follows from $\delta = \sum_{i=1}^m \delta_i \le \sum_{i=1}^m \delta_m = m \delta_m$.

  For the final inequality, note that $m \le 1$ holds if and only if $\Cay(G, \Sigma)$ is complete.
  If $m=2$, then $\Cay(G, \Sigma)$ is a cycle.
  Neither can be a counterexample to \Cref{conj:main}.
  As such, $m \geq 3$.
\end{proof}

We define the function $\alpha$, which will take an important role as parameter for the size of a two-ball in $\mathrm{Cay}(G, \Sigma)$ as follows: 
\begin{equation*}
    \alpha(m_0) \coloneqq \frac{3m_0 - 4}{2m_0 - 2}\text{.}
\end{equation*}

\begin{lemma}\label{lem:two_ball_size}
    Let $\mathrm{Cay}(G, \Sigma)$ be a counterexample to \Cref{conj:main} and let $\alpha = \alpha(m)$.
    Then, for every $g \in G$, the size of the two-ball centered at $g$ satisfies \[
    \abs{\cB_2(g)} 
    = 1 + \delta + \delta^2 - \sum_{i=1}^m \delta^2_i 
    \geq 1 + \delta + \tfrac12 \alpha \delta^2\text{.}
    \]
\end{lemma}
\begin{proof}
    Every vertex at distance one from $g$ which is contained in a clique of size $\delta_i$ has $\delta-\delta_i$ neighbors at distance two from $g$.
    As each vertex at distance two from $g$ has a unique neighbor at distance one from $g$, we obtain the equality
  \begin{equation*}
    \abs{\cB_2(g)} = 1 + \delta + \sum_{i=1}^{m} \delta_i(\delta - \delta_i ) = 1 + \delta + \delta^2 - \sum_{i=1}^{m} \delta_i^2.
  \end{equation*}

  We then apply the reversed Cauchy-Schwarz inequality due to 
  P{\'o}lya and Szeg{\"o}.
  \cite[pp.\ 57, 213--214]{PolyaSzego1925} with the bounds $1 \leq \delta_i \leq m - 1$ obtained in \Cref{lem:cliques_number_and_size}.
  This yields
  \[
    m\sum_{i=1}^m \delta_i^2 
    = \left(\sum_{i=1}^m 1^2\right)\cdot\left(\sum_{i=1}^m \delta_i^2\right) 
    \le \frac14 \frac{m^2}{m-1} \left(\sum_{i=1}^m \delta_i\right)^2 
    = \frac14\frac{m^2}{m-1}\delta^2,
  \]
  from which the claimed inequality follows.
\end{proof}

\begin{lemma}\label{lem:two_ball_intersection}
    Let $g,h \in G$. 
    If $d(g,h) \ge 3$, then $\abs{\cB_2(g) \cap \cB_2(h)} \le 2 \delta - 1$.
\end{lemma}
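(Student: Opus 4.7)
My plan is to do a case analysis on $d \coloneqq d(g,h) \geq 3$. The cases $d \geq 5$ and $d = 4$ I would dispatch immediately: in the first, the triangle inequality forces $\cB_2(g) \cap \cB_2(h) = \emptyset$; in the second, every $v$ in the intersection satisfies $d(g,v) + d(v,h) = 4 = d(g,h)$, so $v$ lies on a geodesic from $g$ to $h$, and uniqueness of geodesics makes $v$ the unique midpoint of that geodesic.

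The real work is when $d = 3$. Writing the unique geodesic as $g, g_1, g_2, h$ and classifying $v \in \cB_2(g) \cap \cB_2(h)$ by the pair $(d(g,v), d(v,h)) \in \{(1,2),(2,1),(2,2)\}$, the uniqueness of geodesics identifies the first two classes as $\{g_1\}$ and $\{g_2\}$ respectively, so it suffices to show $|T| \leq 2\delta - 3$, where $T = \{v : d(g,v) = d(v,h) = 2\}$. For each $v \in T$, let $p(v) \in \cN(g)$ and $q(v) \in \cN(h)$ be the unique intermediate vertices of the length-$2$ geodesics from $g$ to $v$ and from $v$ to $h$ respectively. I would partition $T = T_1 \sqcup T_2$ according to whether $p(v) = g_1$, and bound each piece separately.

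The bound $|T_1| \leq \delta - 2$ will be immediate, since $T_1 \subseteq \cN(g_1) \setminus \{g, g_2\}$: the condition $d(g,v) = 2$ rules out $v = g$, and $d(h,g_2) = 1 \neq 2$ rules out $v = g_2$. For $|T_2| \leq \delta - 1$, I would argue that, for each $p \in \cN(g) \setminus \{g_1\}$, at most one $v \in T$ satisfies $p(v) = p$. This uniqueness claim is the main technical obstacle. Suppose $v_1 \neq v_2$ in $T$ both have $p(v_i) = p \neq g_1$. If $q(v_1) = q(v_2)$, then applying \Cref{lem:diamonds} to the 4-cycle $p, v_1, q(v_1), v_2$ would force the edge $p\,q(v_1)$, and the resulting length-$3$ path $g, p, q(v_1), h$ would contradict uniqueness of the geodesic from $g$ to $h$ unless $p = g_1$, a contradiction. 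If instead $q(v_1) \neq q(v_2)$, the walks $p, v_i, q(v_i), h$ would be two distinct length-$3$ paths from $p$ to $h$; since $d(p,h) \in \{2,3,4\}$ with $d(p,h) = 2$ again forcing $p = g_1$ and $d(p,h) = 4$ forbidding any length-$3$ walk, we must have $d(p,h) = 3$, again contradicting uniqueness of geodesics. Combining the two bounds yields $|T| \leq 2\delta - 3$, and hence $|\cB_2(g) \cap \cB_2(h)| \leq 2 + (2\delta - 3) = 2\delta - 1$.
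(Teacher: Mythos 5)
Your proof is correct, but it takes a longer route than the paper's. The paper covers $\cB_2(g)\cap\cB_2(h)$ by the balls $\cB_1(h'')$ over the neighbours $h''$ of $h$: the neighbour $h'$ lying on the geodesic contributes at most $\delta$ points (since $h\in\cB_1(h')$ but $h\notin\cB_2(g)$), while each of the remaining $\delta-1$ neighbours satisfies $d(g,h'')\ge 3$ and hence contributes at most one point, namely the penultimate vertex of the unique geodesic from $g$ to $h''$. That single argument handles every $d(g,h)\ge 3$ uniformly, with no case split on the distance and no appeal to \Cref{lem:diamonds}. Your stratification by the distance profile and then by the first step $p(v)$ of the geodesic from $g$ arrives at the same $\delta+(\delta-1)$-shaped count, arranged as $2+(\delta-2)+(\delta-1)$, and every step checks out. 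Two small remarks: your 4-cycle case ($q(v_1)=q(v_2)$) is subsumed by your second case, since $p,v_1,q,h$ and $p,v_2,q,h$ are already two distinct length-three paths from $p$ to $h$ and the argument forcing $d(p,h)=3$ and contradicting uniqueness applies verbatim, so \Cref{lem:diamonds} is not actually needed; and the subcase $d(p,h)=4$ is vacuous because $d(p,h)\le d(p,v_1)+d(v_1,h)=3$. Neither affects correctness.
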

\begin{proof}
    Let $h'$ be the vertex preceding $h$ on the geodesic path from $g$ to $h$.
    Then $\cB_2(g) \cap \cB_1(h) \sse \{ h' \}$ by uniqueness of geodesics.
    Hence, $\abs{\cB_2(g) \cap \cB_1(h')} \le \abs{\cB_1(h') \setminus \{h\}} = \delta$.
    If $h'' \in \cN(h) \setminus \{h'\}$, then $d(g,h'') \ge 3$, which implies $\abs{\cB_2(g) \cap \cB_1(h'')} \le 1$.
    In total, we obtain $\abs{\cB_2(g) \cap \cB_2(h)} \le 2\delta - 1$.
\end{proof}

We now have all the tools necessary to prove the main result of this section.

\begin{theorem}\label{thm:bound_generating_set}
    Let $\mathrm{Cay}(G, \Sigma)$ be a counterexample to \Cref{conj:main} of diameter at least three.
    Then $\alpha_0\abs{\Sigma}^2 < \abs{G}$ holds for all $\alpha_0 = \alpha(m_0)$ such that $m_0\ge 3$ is any integer with $\frac12 (m_0 - 1) (3m_0 - 4)(m_0 - 2)^2 < \abs{G}$.
\end{theorem}

Recall that the assumption on the diameter is satisfied whenever $G$ has nontrivial center (\cref{lem:center_no_diam_two}), even order (\cref{prop:notCG}), or order at most $2025$ (\cref{lem:small_order_diameter_two_implies_c5}).
In case $\mathrm{Cay}(G, \Sigma)$ has diameter two, we obtain a similar but weaker bound as follows.
As in the proof of \cref{lem:small_order_diameter_two_implies_c5}, we have $\abs{\Sigma} \geq (\lambda + 1)(\lambda + 13)$ for some $\lambda \geq 2$ as well as $\abs{\Sigma} \geq 46$.
The former yields the inequality 
\[
\lambda \leq \frac{\sqrt{144 + 4\abs{\Sigma}} - 14}{2} \leq \frac{\sqrt{144} + \sqrt{4\abs{\Sigma}} - 14}{2} 
= \sqrt{\abs{\Sigma}} - 1.
\]
Then $\abs{G} = \abs{\Sigma}^2 - \lambda \abs{\Sigma} + 1 > \abs{\Sigma}^2 - \abs{\Sigma}\sqrt{\abs{\Sigma}} + \abs{\Sigma}$ by Equation~\eqref{eq:diam_to_vertex_count} and using the above.
Since $\abs{\Sigma} \geq 46$, a direct computation shows that $\beta_0\abs{\Sigma}^2 < \abs{G}$ where $\beta_0 = \frac{47}{46} - \frac{1}{\sqrt{46}} \approx 0.874$ giving the bound in \cref{thm:bound_generating_set+simplified}.

\begin{remark}
    Before proceeding with the proof, we note that the factor $\alpha_0 = \alpha(m_0)$ is monotonically increasing in $m_0$.
    The choice $m_0 = 3$ is always valid for a counterexample to \Cref{conj:main} and yields $\alpha_0 = \tfrac54$.
    On the other hand, $\alpha_0 \to \frac32$ as $m_0 \to \infty$.
    Moreover, as a byproduct of our computer experiments, we obtain \emph{a posteriori} that the conclusion of \Cref{thm:bound_generating_set} holds with $\alpha_0 = \frac75$:
    after verifying that \Cref{conj:main} holds for every group of order at most $560$ (see \Cref{thm:mainCompSearch}), we set $m_0 = 6$.

    We use a combination of the bound obtained in \Cref{thm:bound_generating_set}, as well as other ones discussed hereafter, in our computer experiments.
    This results in a massive reduction in the number of generating sets we have to consider.
    In order to make these bounds as tight as possible, we choose the maximal value $m_0$ that is permitted for the group currently under examination.
    For some groups, we also employ \Cref{thm:center_geodesics} to establish an improved lower bound on the diameter which, in turn, allows us to replace \Cref{lem:two_ball_intersection} with a better estimate.
\end{remark}

\begin{proof}
    For the sake of deriving a contradiction, we assume that $\alpha_0 \abs{\Sigma}^2 \ge \abs{G}$.
    We also continue to employ the notation established above.
    In particular, the previous inequality becomes $\alpha_0\delta^2 \ge \abs{G}$, and \Cref{lem:cliques_number_and_size} then yields $\alpha_0 (m^2 - m)^2 \ge \alpha_0 \delta^2 \ge \abs{G}$.
    We cannot have $m < m_0$, for otherwise
    \[
        \tfrac12 (m_0 - 1)(3m_0 - 4)(m_0 - 2)^2 = \alpha_0 \big((m_0 -1)^2 - (m_0 - 1)\big)^2 \ge \alpha_0(m^2 - m)^2 \ge \abs{G}
    \]
    which contradicts the choice of $m_0$.
    Hence $m \geq m_0$ and, therefore, also $\alpha = \alpha(m) \ge \alpha(m_0) = \alpha_0$. 

    Finally, recall that $\mathrm{Cay}(G, \Sigma)$ has diameter at least three by assumption.
    Hence there exist elements $g,h \in G$ with $d(g, h) = 3$.
    Using \Cref{lem:two_ball_size} and \Cref{lem:two_ball_intersection} we arrive at 
    \[
        \abs{\cB_2(g) \cup \cB_2(h)} 
        = \underbrace{\abs{\cB_2(g)} + \abs{\cB_2(h)}}_{\ge 2 + 2\delta + \alpha \delta^2} - \underbrace{\abs{\cB_2(g) \cap \cB_2(h)}}_{\le 2\delta - 1} \ge 3 + \alpha \delta^2 
        \ge 3 + \alpha_0 \delta^2 \ge 3 + \abs{G}.
    \]
    As such, $\abs{G} \ge \abs{\cB_2(g) \cup \cB_2(h)} \ge \abs{G} + 3 > \abs{G}$, which is the desired contradiction.
\end{proof}

\begin{corollary}\label{cor:bound_center_naive}
    Let $\mathrm{Cay}(G, \Sigma)$ be a counterexample to \Cref{conj:main}.
    Then $\alpha_0\big(\lvert{Z(G)}\rvert - 1\big)^2 < \abs{G}$.
\end{corollary}

\begin{proof}
    This follows from \Cref{thm:bound_generating_set}, as $\abs{\Sigma} \ge \abs{Z(G)} - 1$ by \Cref{thm:center_geodesics} and \Cref{cor:center}.
Note that we can assume that $Z(G) \neq 1$; \cref{lem:center_no_diam_two} then excludes the case that $\mathrm{Cay}(G, \Sigma)$ has diameter two.
\end{proof}

In \Cref{thm:bound_generating_set} we have used two balls of radius two centered at vertices at distance three from each other to give a lower bound to the size of the group.
In cases where the group has a non-trivial center, we can improve upon this.
By \Cref{thm:center_geodesics}, the distance between any two central elements is at least three.
Using a subset of these as centers of balls of radius two, and \Cref{lem:two_ball_intersection} to bound the size of pairwise intersections, we arrive at the following.

\begin{theorem}\label{thm:bound_generating_set+center}
    Let $\mathrm{Cay}(G, \Sigma)$ be a counterexample to \Cref{conj:main}.
    Then the inequality
    \begin{equation}\label{eq:bound_generating_set+center}
        \tfrac12 \alpha_0 z_0 \abs{\Sigma}^2 - z_0(z_0 - 2)\abs{\Sigma} + \tfrac12 z_0(z_0 + 1) \le \abs{G},
    \end{equation}
    where $\alpha_0 = \alpha(m_0)$ holds for all integers $z_0$ with $1 \le z_0 \le \abs{Z(G)}$ and all lower bounds $m_0$ on the number $m \geq m_0$ of maximal cliques in the neighborhood $\cN(1)$ of $1\in G$.
\end{theorem}

\begin{proof}
    Choose a set of $z_0$ central elements, say $Z_0 \sse Z(G)$.
    By \Cref{lem:two_ball_size} and \Cref{lem:two_ball_intersection},
    \[
        \abs{G} \ge
        \Big\lvert\;\bigcup_{\mathclap{g \in Z_0}} \cB_2(g)\;\Big\rvert
        \ge 
        \sum_{\mathclap{g \in Z_0}} \abs{\cB_2(g)} - \sum_{\mathclap{\substack{g,h \in Z_0\\g \neq h}}} \abs{\cB_2(g) \cap \cB_2(h)} 
        \ge z_0\big(1 + \delta + \tfrac12\alpha\delta^2\big) - \binom{z_0}{2}\big(2\delta - 1\big)
    \]
    with $\alpha \ge \alpha_0$. (Recall that $\delta = \abs\Sigma$.)
    Expanding the rightmost expression yields the stated inequality.
\end{proof}

Note that in \Cref{thm:bound_generating_set+center} we have a family of inequalities, parameterized by the number $z_0$ of central elements used for placing the two-balls.
For groups with a small center, the best lower bound is achieved by $z_0 = |Z(G)|$.
However, if the center is large enough, then at some point it is no longer beneficial to place more balls, due to the way we approximate the intersection.
The best lower bound is obtained for $z_0 \approx \frac{1}{4}\alpha_0\delta$.
Based on this observation we obtain the following bound on the center.

\begin{corollary}\label{cor:bound_center}
  Let $\mathrm{Cay}(G, \Sigma)$ be a counterexample to \Cref{conj:main}.
  Then $\frac{1}{16}\alpha_0^2\big(\lvert{Z(G)}\rvert - 1\big)^3 \le \abs{G}$.
\end{corollary}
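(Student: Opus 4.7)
The plan is to apply Theorem~\ref{thm:bound_generating_set+center} with the near-optimal value
\[
z_0 \;=\; \bigl\lceil \tfrac{1}{4}\alpha_0(\abs{Z(G)} - 1)\bigr\rceil + 1
\]
suggested in the paragraph preceding the corollary, and then extract the desired cubic bound using the auxiliary estimate $\abs{\Sigma} \ge \abs{Z(G)} - 1$.

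For the auxiliary bound, the hypotheses on $\Cay(G,\Sigma)$ together with \Cref{cor:center} force $\Sigma \cap Z(G) = \emptyset$. \Cref{thm:center_geodesics} then assigns to each $z \in Z(G) \setminus \{1\}$ a central root $c_z \in \Sigma$ whose common $k$-th power equals $z$; uniqueness of geodesics makes the assignment $z \mapsto c_z$ injective, yielding $\abs{\Sigma} \ge \abs{Z(G)} - 1$. Writing $Z := \abs{Z(G)} - 1$, a short computation using $\alpha_0 < \tfrac{3}{2}$ confirms $1 \le z_0 \le \abs{Z(G)}$, so \Cref{thm:bound_generating_set+center} may indeed be invoked with this choice of $z_0$.

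Finally, set $\delta := \abs{\Sigma}$. Using the two-sided inequalities $\tfrac{1}{4}\alpha_0 Z + 1 \le z_0 \le \tfrac{1}{4}\alpha_0 Z + 2$ (the upper one from $\lceil x\rceil \le x+1$) to bound each summand on the left-hand side of \Cref{thm:bound_generating_set+center} from below, and then regrouping, one arrives at
\[
\abs{G} \;\ge\; \frac{\alpha_0^2 Z\delta(2\delta - Z)}{16} \;+\; \frac{\alpha_0\delta(\delta - Z)}{2}.
\]
Two applications of $\delta \ge Z$ in the first summand (once for $2\delta - Z \ge Z$ and once to replace a factor of $\delta$ by $Z$) reduce it to $\tfrac{1}{16}\alpha_0^2 Z^3$, while the second summand is non-negative. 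This yields the stated bound $\abs{G} \ge \tfrac{1}{16}\alpha_0^2(\abs{Z(G)} - 1)^3$. There is no substantive obstacle here; the only delicate point is arranging the algebra so that the slack introduced by the ceiling in $z_0$ is absorbed by the slack $\delta \ge Z$, and the grouping above makes this transparent.
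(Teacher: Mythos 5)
Your proposal is correct and follows exactly the route the paper intends: the paper states this corollary without a displayed proof, merely indicating in the preceding paragraph that one should apply \Cref{thm:bound_generating_set+center} with $z_0 = \lceil \frac{1}{4}\alpha_0(\abs{Z(G)}-1)\rceil + 1$ together with $\abs{\Sigma}\ge\abs{Z(G)}-1$ (from \Cref{thm:center_geodesics} and \Cref{cor:center}), which is precisely what you do. Your intermediate inequality $\abs{G} \ge \tfrac{1}{16}\alpha_0^2 Z\delta(2\delta-Z) + \tfrac12\alpha_0\delta(\delta-Z)$ and the subsequent use of $\delta \ge Z$ check out, so you have simply supplied the algebra the paper leaves implicit.
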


\begin{proof}
If $\lvert{Z(G)}\rvert = 1$, then the statement holds.
If $\lvert{Z(G)}\rvert > 1$, the inequality is derived from \Cref{thm:bound_generating_set+center} by setting $z_0 = \big\lceil \frac{1}{4} \alpha_0 \big(\abs{Z(G)} - 1\big)\big\rceil$:
    \begin{align*}
      \abs{G} &\ge (\tfrac12 \alpha_0\delta - z_0) z_0 \delta + 2z_0 \delta\\
      &\ge (\tfrac12 \alpha_0 \delta - \tfrac14 \alpha_0 (Z(G) - 1) - 1) z_0 \delta + 2 z_0 \delta\\
      &\ge (\tfrac12 \alpha_0 \delta - \tfrac14 \alpha_0 (Z(G) - 1)) z_0 \delta\\
      &\ge \tfrac{1}{16} \alpha_0^2 (Z(G) - 1)^3.
    \end{align*}
    The first inequality follows directly from \Cref{eq:bound_generating_set+center}
    by dropping the terms not containing $\delta = \abs{\Sigma}$, which are all positive.
    The second inequality uses $z_0 = \big\lceil \frac{1}{4} \alpha_0 \big(\abs{Z(G)} - 1\big)\big\rceil \le \frac{1}{4} \alpha_0 \big(\abs{Z(G)} - 1\big) + 1$.
    The third inequality is obtained by dropping the term $2z_0\delta - z_0 \delta \geq 0$.
    Finally we use the inequalities $z_0 \ge \frac{1}{4} \alpha_0 \big(\abs{Z(G)} - 1\big)$ and $\delta \ge \abs{Z(G)} - 1$ to obtain the desired statement.
\end{proof}

To obtain the result above, we placed two-balls on central elements.
If the central elements are sufficiently far apart, then we can show an even stronger bound, by placing the two-balls along the central geodesics (geodesics connecting the central elements).
Recall that by \Cref{thm:center_geodesics} central geodesics all have the same length $k$.
We begin with the following lower bound on the distance between two vertices on two different central geodesics.

\begin{lemma}\label{lem:vertex-on-central-geodesic-distance}
    Let $\mathrm{Cay}(G, \Sigma)$ be a counterexample to \Cref{conj:main} with $Z(G) \neq \Triv$.
    Let $g$ be a vertex on a central geodesic with endpoints $y_1$ and $y_2$.
    Let $h$ be a vertex on a different central geodesic with endpoints $z_1$ and $z_2$.
    Then $d(g,h) \ge \max \{ i, j \}$, where $i= \min \{ d(y_1, g), d(y_2, g) \}$ and $j = \min \{ d(z_1, h), d(z_2, g)\}$.
\end{lemma}

\begin{proof}
    Let $k$ be the length of the central geodesics.
    As $g$ and $h$ lie on different central geodesics, at most one pair of the vertices $\{y_1, y_2, z_1, z_2\}$ may coincide.
    We choose $y \in \{y_1, y_2\}$, $z \in \{z_1, z_2\}$ and $a, b \in \Sigma$ such that $\{y_1, y_2\} = \{y, ya^k\}$, $\{z_1, z_2\} = \{z, zb^k\}$, and if any of the four endpoints coincide, then $ya^k = zb^k$.
    A consequence of this choice is $y \neq z$, $y \neq zb^k$, $d(y, g) \le k - i$, and $d(z, h) \le k - j$.

    We claim that $d(y, h) \ge k$.
    Let $w$ be the geodesic of $y^{-1}h$.
    Observe that $y^{-1}h$ commutes with $b$, as it is the product of the central element $y^{-1}z$ with a power of $b$.
    In the case that $w \neq b^{\pm m}$ where $m = |w|$, by \Cref{lem:commgen_coset}, the geodesic of $y^{-1}z \in y^{-1}h\gen{b}$ has length at most $m$, that is $k = d(1, y^{-1}z) \le m = d(1, y^{-1}h)$.
    We obtain the desired statement $d(y, h) \ge k$ using vertex transitivity.
    In the case that $w = b^{\pm m}$, assume for a contradiction that $m < k$.
    Then $y^{-1}h = b^{\pm m}$ is on the central geodesic between $1$ and $b^{\pm k}$.
    By vertex transitivity, $h$ is on the 
    central geodesic between $y$ and $yb^{\pm k}$.
    Thus $y \in \{z, zb^k\}$ contradicting
    the choice~of~$y$.\par
    
    From the triangle inequality $d(y, h) \le d(y, g) + d(g, h)$ we obtain $d(g, h) \ge d(y, h) - d(y, g) \ge k - (k-i) = i$. 
    Similarly, using $d(z, g) \le d(z, h) + d(h, g)$, we obtain $d(g, h) \ge j$.
\end{proof}

Multiple disjoint balls of radius two can be placed along each central geodesic if its length permits.
However, we will only give an explicit bound for the simplest case, placing a single such ball in the middle of each central geodesic.

\begin{proposition}\label{prop:bound_center4}
    Let $\mathrm{Cay}(G, \Sigma)$ be a counterexample to \Cref{conj:main}. 
    If $3 \nmid |G|$ and $5,7 \nmid |G : Z(G)|$, then $\frac{1}{4}\alpha_0 \big(\abs{Z(G)} - 1\big)^4 < \abs{G}$.
\end{proposition}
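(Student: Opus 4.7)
The plan is to refine the ball-packing argument of \Cref{thm:bound_generating_set+center} by placing a ball of radius two at the midpoint of \emph{every} central geodesic, using the divisibility hypotheses to force the central-geodesic length $k$ to be large enough that all such balls are pairwise disjoint.

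If $|Z(G)| = 1$ the inequality is vacuous, so I assume $|Z(G)| \ge 2$. Since $\Cay(G, \Sigma)$ is geodetic but not complete, \Cref{cor:evencenter} forces $|Z(G)|$ to be odd, and combined with $3 \nmid |G|$ (hence $3 \nmid |Z(G)|$) this yields $|Z(G)| \ge 5$. Consequently $Z(G)$ contains an element of order at least $5 \ge 4$, so \Cref{lem:central_length_not_even} gives that $k$ is odd. Because $k$ divides $|G : Z(G)|$, the hypotheses $5, 7 \nmid |G:Z(G)|$ forbid $k \in \{5, 7\}$, the hypothesis $3 \nmid |G|$ forbids $k = 3$, and since $9 = 3^2$ that last divisibility also forbids $k = 9$. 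Hence $k \ge 11$, so $(k-1)/2 \ge 5$.

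For each ordered pair $(y, z) \in Z(G) \times (Z(G) \setminus \{1\})$ the central geodesic from $y$ to $yz$ is $y \cdot c_z^k$, where $c_z \in \Sigma$ is the unique central root of $z$ provided by \Cref{thm:center_geodesics}; I let its midpoint be $m_{y,z} \coloneqq y \cdot c_z^{(k-1)/2}$. This gives $|Z(G)|(|Z(G)| - 1)$ midpoints, one per central geodesic. For distinct pairs $(y,z) \neq (y', z')$ the two central geodesics in question are distinct, and the unlabelled distance lemma immediately preceding this proposition delivers
\[
  d(m_{y,z},\, m_{y', z'}) \ge \min\bigl\{(k-1)/2,\, (k+1)/2\bigr\} = (k-1)/2 \ge 5.
\]
Hence the midpoints are pairwise distinct vertices and the closed balls $\cB_2(m_{y,z})$ are pairwise disjoint (two radius-two balls meeting would force their centres to lie within distance four of each other).

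Summing volumes then finishes the proof. \Cref{lem:two_ball_size} gives $|\cB_2(m_{y,z})| \ge \tfrac12 \alpha_0 |\Sigma|^2$, and because the map $z \mapsto c_z$ from $Z(G) \setminus \{1\}$ into $\Sigma$ is injective (otherwise the unique geodesics $c_{z_1}^k$ and $c_{z_2}^k$ would coincide and force $z_1 = z_2$), we also have $|\Sigma| \ge |Z(G)| - 1$. Disjointness thus yields
\[
  |G| \ge |Z(G)|\bigl(|Z(G)| - 1\bigr) \cdot \tfrac12 \alpha_0 \bigl(|Z(G)| - 1\bigr)^2 = \tfrac12 \alpha_0 |Z(G)| \bigl(|Z(G)| - 1\bigr)^3 > \tfrac14 \alpha_0 \bigl(|Z(G)| - 1\bigr)^4,
\]
which is the claimed inequality (with a factor of roughly two to spare). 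The main obstacle is the first stage: carefully combining the three divisibility constraints with \Cref{lem:central_length_not_even} and \Cref{cor:evencenter} to force $k \ge 11$, in particular excluding $k = 9$ via $3 \nmid |G|$ rather than a mere hypothesis on $|G : Z(G)|$.
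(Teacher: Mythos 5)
Your overall strategy is exactly the paper's: use the divisibility hypotheses together with \Cref{lem:central_length_not_even} and the fact that $k$ divides $\abs{G : Z(G)}$ to force the central-geodesic length $k$ to be at least $11$, place a ball of radius two at the midpoint of each central geodesic, invoke the distance lemma immediately preceding the proposition to get disjointness, and sum volumes via \Cref{lem:two_ball_size} and $\abs{\Sigma} \ge \abs{Z(G)} - 1$. Your derivation of $k \ge 11$ is correct and in fact spelled out in more detail than the paper bothers with.

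There is, however, a genuine error in the disjointness step. You index central geodesics by ordered pairs $(y,z) \in Z(G) \times (Z(G)\setminus\{1\})$, i.e.\ by ordered pairs of endpoints $(y, yz)$, and assert that for distinct pairs the two geodesics are distinct, so that the distance lemma applies. This fails for the pair $(y,z)$ versus $(yz, z^{-1})$: these label the \emph{same} geodesic traversed in opposite directions (by uniqueness of geodesics one has $c_{z^{-1}} = c_z^{-1}$, and the underlying path is identical). Since $k$ is odd, the two associated ``midpoints'' $m_{y,z} = yc_z^{(k-1)/2}$ and $m_{yz,z^{-1}} = yc_z^{(k+1)/2}$ are \emph{adjacent} vertices, not at distance at least $5$, and their $2$-balls overlap almost entirely. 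So you actually have only $\tfrac12\abs{Z(G)}\bigl(\abs{Z(G)}-1\bigr)$ pairwise disjoint balls, not $\abs{Z(G)}\bigl(\abs{Z(G)}-1\bigr)$ of them, and your claimed ``factor of two to spare'' is an artefact of this double count. The conclusion survives the correction: with the halved count one obtains $\abs{G} \ge \tfrac14\alpha_0\abs{Z(G)}\bigl(\abs{Z(G)}-1\bigr)^3 > \tfrac14\alpha_0\bigl(\abs{Z(G)}-1\bigr)^4$, which is precisely the computation in the paper.
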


\begin{proof}
    If $\abs{Z(G)} = 1$, then clearly the statement is true.
    By \Cref{cor:evencenter} the order of $Z(G)$ is odd.
    With our assumption $3 \nmid \abs{G}$, there must be an element of order at least $5$ in the center.
    Thus, the length $k$ of central geodesics must be odd by \Cref{lem:central_length_not_even}.
    As $5,7 \nmid \abs{G : Z(G)}$ we have $k \ge 11$.
    
    There are $\tfrac12 \abs{Z(G)} \big(\abs{Z(G)} -1\big)$ distinct pairs of central elements, and thus, that many central geodesics.
    We place a two-ball on the middle of each central geodesic.
    The intersection between any pair of two-balls is trivial, as the distance between any two of their center points is at least $5$ by \Cref{lem:vertex-on-central-geodesic-distance}.
    Thus we obtain
    \begin{align*}
        \abs{G} &\ge \tfrac12 \abs{Z(G)} \big(\abs{Z(G)} -1\big) \abs{\cB_2(1)}\\ 
        & >\tfrac12 \big(\abs{Z(G)} -1\big)^2 \big(\tfrac{1}{2} \alpha\delta^2 + \delta + 1\big) \\
        &> \tfrac14\alpha\big(\abs{Z(G)} - 1\big)^2\delta^2\text{.}
    \end{align*}
    Using the inequality $\delta \ge \abs{Z(G)} - 1$ yields the desired bound.
\end{proof}

\section{Further cases: dihedral, nilpotent, groups with large commutativity degree}\label{sec:furtherCases}

In this section we show that further large families of groups satisfy \Cref{conj:main} by combining results from \Cref{sec:theoryProofs} with more detailed knowledge about finite group theory and insights gleaned from the computer search.

We first consider groups that have an abelian subgroup of index two (which is necessarily normal); these include the dihedral groups. Pushing this to index three presents more challenges; so for that case we are able to prove the conjecture only in the two important special cases when the subgroup is not normal or when the center is trivial.
For nilpotent groups we can prove the conjecture holds in all groups of class two except for a particular subfamily (see \Cref{cor:SpecialNilpotent2}), and in groups of any class provided certain numerical conditions are satisfied (see \Cref{thm:nilpotent}).
Each of these families of groups in some sense is close to abelian, which is emphasized by the fact that these classes cover all groups with a high \emph{commutativity degree} (see \Cref{thm:commutativity_degree}).

\subsection{Abelian subgroups of index two}

\begin{lemma}\label{lem:index_two}
  Let $G$ be a group and $1 < N < G$ with $\lvert G : N \rvert = 2$.
If $\Cay(G, \Sigma)$ is geodetic, then $N \cap \Sigma \neq \emptyset$.
\end{lemma}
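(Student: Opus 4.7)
My plan is to argue by contradiction. Assume that $N \cap \Sigma = \emptyset$, so every element of $\Sigma$ lies in the non-trivial coset $G \setminus N$. Since the product of any two elements of $G \setminus N$ lies in $N$ (and vice versa), every edge of the Cayley graph $\Gamma = \Cay(G, \Sigma)$ joins a vertex of $N$ with a vertex of $G \setminus N$. Hence $\Gamma$ is bipartite with bipartition $\{N,\, G \setminus N\}$.

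The key step will be to show that every connected bipartite geodetic graph is a tree. I would prove this as follows: suppose $\Gamma$ contains a cycle, and let $x_0, x_1, \ldots, x_{2n} = x_0$ be a shortest cycle in $\Gamma$, which is of even length because $\Gamma$ is bipartite. If $d(x_i, x_{i+n}) < n$ for some $i$, then concatenating a shortest $x_i$--$x_{i+n}$ path with one of the two arcs of the cycle yields a closed walk of length strictly less than $2n$, hence a strictly shorter cycle (again of even length by bipartiteness), contradicting minimality. Therefore $d(x_i, x_{i+n}) = n$ for all $0 \le i \le n$, which is precisely the configuration ruled out by \Cref{lem:even_cycle}. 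Thus $\Gamma$ has no cycle, and being connected it is a tree.

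To close the argument, I will use a size count: a finite $\abs{\Sigma}$-regular tree on $\abs{G}$ vertices has simultaneously $\abs{G} - 1$ and $\tfrac12 \abs{G}\abs{\Sigma}$ edges, forcing $\abs{\Sigma} \le 1$ and hence $\abs{G} \le 2$. Since $N$ is non-trivial, however, $\abs{G} = 2\abs{N} \ge 4$, a contradiction. Hence $N \cap \Sigma \neq \emptyset$. I expect the main (but minor) obstacle to be the careful extraction of a strictly shorter cycle from the shortcut closed walk in the middle step; this is routine because any closed walk in a simple graph contains a cycle of no greater length, and bipartiteness keeps the parity of that cycle even.
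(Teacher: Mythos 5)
Your proof is correct and follows essentially the same route as the paper's: both observe that $\Sigma \subseteq G \setminus N$ forces every cycle of $\Cay(G,\Sigma)$ to be even and then invoke \Cref{lem:even_cycle} via a shortest-cycle argument. You are in fact slightly more complete than the paper, which leaves implicit both the extraction of a shorter cycle and the exclusion of the tree case that your edge count handles.
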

\begin{proof}
	If $\Sigma \cap N = \emptyset$, then every word $w \in \Sigma^*$ representing $1 \in G$ has even length. 
    Hence, all cycles in $\Cay(G, \Sigma)$ must have even length.
    But then $\Cay(G, \Sigma)$ cannot be geodetic (see \Cref{lem:even_cycle}).
\end{proof}

\begin{theorem}\label{thm:abelian_by_c2_split}
  Let $\phi \colon A \to A$ be an order-two automorphism of an abelian group $A = \langle X\mid R\rangle$. Let \[D_{A,\phi} \coloneqq A \rtimes_\phi C_2 =\langle X\cup\{t\}\mid R\cup\{t^2,txt(\phi(x))^{-1};\, x\in X\}\rangle\] be the corresponding semidirect product. 
  Then the only geodetic Cayley graph of the generalized dihedral group $D_{A,\phi}$ is the complete graph.
\end{theorem}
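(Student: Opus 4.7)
The plan is to proceed by cases on $\phi$ and on the parity of $|Z(G)|$. If $\phi = \mathrm{id}$, then $G = A \times C_2$ is abelian and \Cref{cor:abelian} finishes. If $\phi$ has order exactly two and $|Z(G)|$ is even, \Cref{cor:evencenter} finishes. In the remaining case ($\phi$ of order exactly two and $|Z(G)|$ odd), I first observe that $|A|$ must also be odd: $\phi$ restricts to an involution of the $2$-torsion $A[2]$ whose fixed-point set $A^+ \cap A[2]$ has order dividing $\gcd(|A^+|,|A[2]|) = 1$, so $\phi$ permutes $A[2] \setminus \{1\}$ freely in orbits of size two, forcing $|A[2]| = 1$. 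Hence $A$ decomposes as $A = A^+ \oplus A^-$, where $A^+ = \mathrm{Fix}(\phi) = Z(G)$ and $A^- = \{a \in A \mid \phi(a) = a^{-1}\}$.

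Suppose for contradiction that $\Sigma$ is a generating set for which $\Cay(G,\Sigma)$ is geodetic but not complete. Then $\Sigma \cap Z(G) = \emptyset$ by \Cref{cor:center}, and \Cref{lem:conjugate_ord_two} applied to $t$ yields some $\tau \in \Sigma$ of order two conjugate to $t$; writing $\tau = xt$ one checks $x \in A^-$, so after replacing $t$ by $\tau$ as the section of the extension (which preserves $A$ and $\phi$) I may assume $\tau = t \in \Sigma$. \Cref{lem:index_two} then supplies $a \in \Sigma \cap A$. The heart of the argument is the following \emph{dihedral} computation for $a \in \Sigma \cap A^-$: the identity $ta = \phi(a)t = a^{-1}t$ in $G$ gives two distinct length-two walks from $1$ to $a^{-1}t$, so geodesic uniqueness forces $a^{-1}t \in \Sigma$. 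Iterating with $a^{-1}t$ in place of $t$ yields $\langle a \rangle t \subseteq \Sigma$, and then comparing the length-two walks $a \cdot a^{j-1}$ and $t \cdot a^{-j}t$ representing $a^j$ inductively gives $a^j \in \Sigma$ for $1 \le j < \ord(a)$; hence $\langle a, t\rangle$ is a complete subgroup. Iterated application of \Cref{lem:complete_subgroup_join} along the common element $t$ then makes $H := \langle (\Sigma \cap A^-) \cup \{t\}\rangle$ a complete subgroup of $G$.

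Finishing the proof amounts to showing $\Sigma \subseteq A^- \cup A^- t \subseteq H$, since then $H = G$ and $\Cay(G,\Sigma)$ is complete. For $\Sigma \cap tA$ this is easy: any $s = yt \in \Sigma \cap tA$ satisfies $s^2 = y\phi(y) = y_+^2 \in Z(G)$ (using $A = A^+ \oplus A^-$), and if $s^2 \neq 1$ then $ss$ is a length-two word representing a non-trivial central element, contradicting \Cref{thm:center_geodesics}; hence $y \in A^-$. The hard part will be ruling out \emph{mixed} generators $a = a_+ a_- \in \Sigma \cap A$ with both $a_\pm$ non-trivial: here $a\phi(a) = a_+^2$ is a non-trivial central element and so $\phi(a) \notin \Sigma$ by the same central-geodesic argument, but an outright contradiction requires further work. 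My plan is to analyze the length-three word $tat$ representing $\phi(a)$ and to combine \Cref{lem:diamonds}, \Cref{lem:commgen_coset} and \Cref{lem:conjugacyclass} with \Cref{thm:center_geodesics} (applied to the central root of $a_+^2$, which under the present assumptions must itself be a mixed generator in $A$) to produce either a forbidden $4$-cycle configuration or a length-two word for some non-trivial central element, yielding the desired contradiction.
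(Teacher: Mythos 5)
Your reductions are correct as far as they go: the parity argument showing $\lvert A\rvert$ is odd, the decomposition $A = A^+ \oplus A^-$ with $A^+ = Z(G)$, the normalization $t \in \Sigma$, the use of \Cref{lem:index_two} to get $\Sigma \cap A \neq \emptyset$, and the elimination of generators $yt \in \Sigma$ with $y_+ \neq 1$ via \Cref{thm:center_geodesics} all check out. (Two remarks: your ``dihedral computation'' for $a \in \Sigma \cap A^-$ is a long way around --- such an $a$ satisfies $a^G = \{a^{\pm 1}\} \subseteq \Sigma$, so \Cref{lem:conjugacyclass} applied with $H = G$ already forces the whole graph to be complete, no clique-joining needed; and the inclusion $A^- \cup A^-t \subseteq H$ in your final assembly is not justified, since $H$ only contains $\langle \Sigma \cap A^-\rangle$ rather than all of $A^-$. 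Neither point is fatal, because once every element of $\Sigma \cap A$ is known to lie in $A^- \cup (Z(G)\setminus 1)$ you are done immediately by \Cref{cor:center} and \Cref{lem:conjugacyclass}.)

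The genuine gap is the case you yourself flag as ``the hard part'': a mixed generator $x = x_+x_- \in \Sigma \cap A$ with $\phi(x) \notin \{x^{\pm 1}\}$. By \Cref{lem:index_two} and the easy cases, this is the \emph{only} case that can actually occur under the contradiction hypothesis, so it is not a residual subcase but the entire content of the theorem; your proposal offers only a list of ingredients (``analyze $tat$\,\dots\ combine \Cref{lem:diamonds}, \Cref{lem:commgen_coset} and \Cref{lem:conjugacyclass} with \Cref{thm:center_geodesics}'') without an argument. The paper's proof of exactly this case is delicate: from $\phi(x)x = txtx$ it pins the central geodesic length down to $k = 3$; it then shows every central root lies in $\Sigma \cap A$ (ruling out roots of the form $at$ by a separate computation); and finally it manufactures \emph{two} central roots $y, z \in \Sigma \cap A$ with $z \notin \{y^{\pm 1}\}$, so that $z^{\langle y\rangle} = \{z\} \subseteq \Sigma$ and \Cref{lem:conjugacyclass} forces $y^3 \in Z(G) \cap \Sigma$, contradicting \Cref{cor:center}. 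Producing the second root requires a case analysis on whether $y = x$, $y = x^{-1}$ (where one must rule out $\operatorname{ord}(y) \in \{3,5\}$ and exploit $\operatorname{ord}(y) = 15$ to find $z$ with $z^3 = y^6$), or $y \neq x^{\pm 1}$. Nothing in your sketch substitutes for this step, so the proof is incomplete where it matters most.
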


\begin{proof}
  Assume that $\Cay(D_{A,\phi}, \Sigma)$ is geodetic but not complete (in particular, $Z(D_{A,\phi})\cap \Sigma=\emptyset$ by \Cref{cor:center}).
  Since $\mathrm{ord}(t) = 2$, the generating set $\Sigma$ contains a conjugate of $t$ by \Cref{lem:conjugate_ord_two}.
  Upon replacing $\Sigma$ with a suitable conjugate if necessary, we may therefore assume that $t \in \Sigma$.

  As $\lvert D_{A,\phi} : A \rvert = 2$, there exists some $x \in A \cap \Sigma$ by \Cref{lem:index_two}.
  If $\phi(x) \in \{x^{\pm1}\}$, then $x^{D_{A, \phi}} \subseteq \{x^{\pm1}\} \subseteq \Sigma$.
  Since $\Sigma\neq \{x^{\pm1}\}$, \Cref{lem:conjugacyclass} implies that $\Cay(D_{A,\phi}, \Sigma)$ is complete.
  Thus $\phi(x)\not\in\{x^{\pm1}\}$. 
  Note that $\phi(x)x$ commutes with $t$ as 
  $t\phi(x)x=ttxtx=xtxtt=x\phi(x)t=\phi(x)xt$ as $x,\phi(x)\in A$.
  Hence $\phi(x)x \in Z(D_{A,\phi}) \setminus \{1\}$. 

  By \Cref{thm:center_geodesics}, there exists $y \in \Sigma$ such that $\phi(x) x = y^k$ with $k \geq 3$ and, therefore, with $k = 3$ by uniqueness of geodesics (since $\phi(x)x=txtx$).
  Thus, in particular, the length of central geodesics is three.  
  Now, suppose that $z\in \Sigma$ with $z^3\in Z(D_{A,\phi})$. 
  Then $z \in A$, for otherwise $z^3 \not\in A$ (since $A$ has index $2$) which would contradict the assumption that $z^3 \in Z(D_{A,\phi}) \leq A$.
  In particular, this shows that $y \in \Sigma \cap A$.
  
  Next, we argue that we can find an element $z \in \Sigma \cap A$ with $z \notin \{y^{\pm1}\}$. 
  If so, then $y$ and $z$ commute; hence $z^{\gen{y}}=\{z\} \sse \Sigma$ and then we can apply \Cref{lem:conjugacyclass} to show that $\gen{y}$ is complete.
  However, this implies $y^3 \in \Sigma$ and thus $Z(D_{A,\phi})\cap \Sigma\neq \emptyset$; contradicting the assumption that $Z(G) \cap \Sigma = \emptyset$.

    Firstly, suppose that $y=x$, then $x^3 = y^3 = \phi(x)x$ and thus $\phi(x) = x^2$. We obtain the contradiction $y^3 = x^3=\ov x x^4 = \ov x \phi(\phi(x)) = \ov x x = 1$.
    Next, suppose that $y = x^{-1}$ holds.
    Then $x^{-3} = y^3 = \phi(x)x$ and thus $\phi(x) = x^{-4}$, or equivalently $\phi(y) = y^{-4}$. Then $\phi(\phi(y))=\phi(y^{-4})=y^{16}$ and since $\phi$ is order two, this means $y=y^{16}$ so $y^{15}=1$.
    This means the order of $y$ is either $3,5$, or $15$. It cannot be $3$ since $y^3$ is a non-trivial element of the center.
    If $y^5=1$, then $y = y^{-4} =\phi(y) \in Z(D_{A,\phi})$ and so $Z(D_{A,\phi}) \cap \Sigma \neq \emptyset$ and we are done.
    Thus $\ord(y)=15$.
    We then have $y^6 \in Z(D_{A,\phi}) \setminus \{1\}$ and $y^6 \notin \{y^{\pm3}\}$.
    Thus $y^6 = z^3$ for some $z \in \Sigma \cap A$ with $z \notin \{y^{\pm1}\}$.
    Lastly if $y \neq x^{\pm1}$ then $z \coloneqq x$ satisfies our requirements. 
    Thus we have found an element $z \in \Sigma \cap A$ with $z \not\in \{y^{\pm1}\}$ which by the above paragraph shows that the Cayley graph is complete, contradicting our assumption.
\end{proof}

\begin{corollary}\label{cor:dihedral}
    The only geodetic Cayley graph of a dihedral group is the complete graph.
\end{corollary}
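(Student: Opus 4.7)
The plan is to derive this as an essentially immediate corollary of \Cref{thm:abelian_by_c2_split}, with only small-$n$ special cases needing separate treatment. Write the dihedral group of order $2n$ as $D_{2n} = \langle r, s \mid r^n, s^2, srs = r^{-1}\rangle$. For $n \ge 3$, this is exactly the split extension $C_n \rtimes_\phi C_2$ where $\phi \colon C_n \to C_n$ is the inversion map $r \mapsto r^{-1}$. The key observation is that $\phi$ genuinely has order two precisely when $n \ge 3$, so \Cref{thm:abelian_by_c2_split} applies directly with $A = C_n$ and yields the claim.

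It remains to dispose of the degenerate small cases where the hypothesis ``$\phi$ of order two'' fails. For $n = 1$ we have $D_2 \cong C_2$, whose only (non-trivial symmetric) generating set is $\{s\}$, giving the complete graph $K_2$; alternatively this is covered by \Cref{cor:cyclic} (the Cayley graph has even order so cannot be an odd cycle). For $n = 2$ we have $D_4 \cong C_2 \times C_2$, which is non-cyclic abelian, and \Cref{cor:abelian} shows that any geodetic Cayley graph must be complete. In both degenerate cases the conclusion holds trivially.

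There is no real obstacle here, as the corollary reduces to verifying that the structural hypothesis of \Cref{thm:abelian_by_c2_split} is satisfied for dihedral groups. The only thing to be mildly careful about is confirming the order of $\phi$; for $n \ge 3$, $\phi(r) = r^{-1} \neq r$ and $\phi^2 = \mathrm{id}$, so $\phi$ has order exactly two as required.
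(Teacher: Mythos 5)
Your proposal is correct and takes the same route the paper intends: \Cref{cor:dihedral} is stated as an immediate consequence of \Cref{thm:abelian_by_c2_split}, obtained by writing $D_{2n} = C_n \rtimes_\phi C_2$ with $\phi$ the inversion automorphism, which has order two for $n \ge 3$. Your additional handling of the degenerate cases $n=1,2$ (via \Cref{cor:cyclic} and \Cref{cor:abelian}) is a careful touch the paper leaves implicit, but it does not change the argument.
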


We will now reduce the general case, of admitting an abelian subgroup of index two, to the situation discussed in \Cref{thm:abelian_by_c2_split}.
In other words, we will prove the following.

\begin{theorem}\label{thm:abelian_by_c2}
    Let $G$ be a group.
    If there exists an abelian subgroup $A \leq G$ such that $\lvert G : A \rvert = 2$, then the only geodetic Cayley graph of $G$ is the complete graph.
\end{theorem}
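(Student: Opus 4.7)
The plan is to reduce the general case to one of the settings already resolved in the paper: the abelian case (\Cref{cor:abelian}), the even-center case (\Cref{cor:evencenter}), or the split case (\Cref{thm:abelian_by_c2_split}). Suppose $\Cay(G,\Sigma)$ is geodetic; I aim to show that it is complete. If $G$ is abelian, \Cref{cor:abelian} applies, so I may assume $G$ is non-abelian. Fixing any $g \in G \setminus A$, the map $\phi : A \to A$, $a \mapsto gag^{-1}$, is an automorphism of $A$. Since $g^2 \in A$ and $A$ is abelian, $\phi^2$ is conjugation by the element $g^2$ of the abelian group $A$ and is therefore trivial. Hence $\phi$ has order dividing two, and in fact exactly two because $G$ is non-abelian. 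A short check then yields $Z(G) = \{a \in A : \phi(a) = a\}$.

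The key step is to show that $G$ contains either an involution outside $A$ (which yields a splitting $G = A \rtimes \langle t \rangle$ and lets me invoke \Cref{thm:abelian_by_c2_split}) or an involution inside $Z(G)$ (which lets me invoke \Cref{cor:evencenter}). I would split on the parity of $|A|$. If $|A|$ is odd, then by Cauchy's theorem applied to $|G| = 2|A|$ the group $G$ contains an involution $t$, which cannot lie in $A$; hence the extension $1 \to A \to G \to C_2 \to 1$ splits and \Cref{thm:abelian_by_c2_split} closes this case.

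If instead $|A|$ is even, I would examine the non-trivial $2$-torsion subgroup $A[2] = \{a \in A \mid a^2 = 1\}$. Since $\phi$ is an automorphism of $A$, it preserves $A[2]$, and $\phi|_{A[2]}$ is an $\mathbb{F}_2$-linear involution of the non-trivial elementary abelian $2$-group $A[2]$. The crucial observation is that in characteristic two $(\phi - \mathrm{id})^2 = \phi^2 - \mathrm{id} = 0$ on $A[2]$, so the image of $\phi - \mathrm{id}$ is contained in its kernel. If $\phi|_{A[2]}$ is trivial, every non-zero element of $A[2]$ is fixed; otherwise the non-trivial image $(\phi - \mathrm{id})(A[2])$ consists entirely of fixed vectors. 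Either way I obtain a non-trivial $t \in A[2]$ with $\phi(t) = t$, \ie an order-two element of $Z(G)$, so $|Z(G)|$ is even and \Cref{cor:evencenter} concludes. The main obstacle is the $|A|$-even non-split case, where producing the required central involution is not possible via the geodetic machinery of Section~\ref{sec:theoryProofs} directly; instead the characteristic-two linear algebra trick above is what breaks the deadlock.
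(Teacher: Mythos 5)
Your proof is correct and follows essentially the same route as the paper's: split on the parity of $\lvert A\rvert$, produce a central involution when $\lvert A\rvert$ is even (your fixed vector $(\phi-\mathrm{id})(a)=\phi(a)a$ is exactly the paper's element $a^g a$) so that \Cref{cor:evencenter} applies, and produce an involution outside $A$ when $\lvert A\rvert$ is odd so that the extension splits and \Cref{thm:abelian_by_c2_split} applies. The differences are cosmetic — you invoke Cauchy's theorem where the paper uses \Cref{lem:abelian_by_prime_center}, and you dress the fixed-point argument in $\mathbb{F}_2$-linear algebra; the only nitpick is that for abelian $G$ you should appeal to \Cref{cor:cyclic} or to the even order of $Z(G)=G$ rather than \Cref{cor:abelian} alone, since $G$ could be cyclic.
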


Our reduction will rely on a certain relationship between the structure of the group $G$ and the parity of the order of $A$ as well as that of $Z(G)$.
Part of this relationship is captured by the following observation (with $p = 2$). 
It will be used again (with $p=3$) in \Cref{subsec:abelian_index_3}.

\begin{lemma}\label{lem:abelian_by_prime_center}
    Let $G$ be a group and $A \trianglelefteq G$ a normal abelian subgroup of prime index $\lvert G : A \rvert = p$.
    If $g \in G \setminus A$, then $g^p \in Z(G) \cap A$.
    Moreover, if $p \nmid \abs{A}$, then there exists some $\tilde g \in G \setminus A$ with $\tilde g^p = 1$.
\end{lemma}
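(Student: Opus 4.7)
For the first assertion, the plan is straightforward. Since $G/A$ is cyclic of prime order $p$ and $g \notin A$, the coset $gA$ generates $G/A$, so $g^p \in A$. To see $g^p \in Z(G)$, I would observe that $g^p$ commutes with $g$ by definition and commutes with every element of $A$ because $g^p \in A$ and $A$ is abelian. Since $G = \langle g\rangle A$, this suffices.

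For the second assertion, the idea is to search for a suitable $\tilde g$ among the cosets, namely of the form $\tilde g = g b$ with $b \in A$. Using that $A$ is normal, the inner automorphism $\phi\colon A \to A$ defined by $\phi(a) = g a g^{-1}$ is an automorphism of $A$, and from the first part $\phi^p = \mathrm{id}$ because $g^p \in Z(G)$. The key algebraic identity to establish (by a short induction on $k$, pushing generators of $A$ past powers of $g$ via $xg = g\phi^{-1}(x)$) is
\[
(gb)^p \;=\; g^p \cdot b\,\phi(b)\,\phi^2(b)\cdots\phi^{p-1}(b) \;=\; g^p \cdot N(b),
\]
where $N(b) \coloneqq \prod_{i=0}^{p-1}\phi^i(b)$ is the norm map. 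Note that here I use $\phi^p = \mathrm{id}$ to rewrite $\phi^{-k} = \phi^{p-k}$, together with the fact that $A$ is abelian to collect the terms.

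It then suffices to produce $b \in A$ with $N(b) = g^{-p}$. The main idea is to restrict attention to the fixed subgroup $A^\phi \coloneqq \{a \in A \mid \phi(a) = a\}$. On $A^\phi$ the map $N$ simplifies to the $p$-th power map $b \mapsto b^p$. Now the hypothesis $p \nmid \abs{A}$ enters: since $\abs{A^\phi}$ divides $\abs{A}$, the $p$-th power map is a bijection on $A^\phi$. The element $g^{-p}$ lies in $A^\phi$ because $g^p \in Z(G) \cap A$, so there exists $b \in A^\phi$ with $b^p = g^{-p}$. Setting $\tilde g \coloneqq gb$ yields $\tilde g^p = g^p N(b) = g^p \cdot g^{-p} = 1$, and $\tilde g \notin A$ because $g \notin A$ and $b \in A$.

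The main obstacle is really just setting up the norm identity cleanly; once the computation $(gb)^p = g^p N(b)$ is in place, the coprimality assumption immediately gives surjectivity of $N$ restricted to the central (fixed) part of $A$, which is all one needs. One small but important point to not overlook is verifying $g^{-p} \in A^\phi$ before inverting the $p$-th power map there; this follows immediately from the first part of the lemma.
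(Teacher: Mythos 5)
Your proposal is correct and, at its core, takes the same route as the paper: the first part is identical, and for the second part both arguments produce an element $b$ commuting with $g$ whose $p$-th power is $g^{-p}$ (the paper takes $b$ in the cyclic group $\langle g^p\rangle \le Z(G)\cap A$, you take it in the fixed subgroup $A^\phi$, which in fact equals $Z(G)\cap A$ here), using that $p\nmid\abs{A}$ makes the $p$-th power map bijective there. The norm-map identity $(gb)^p = g^pN(b)$ is a correct but unnecessary detour, since for $b$ commuting with $g$ one has $(gb)^p = g^pb^p$ directly.
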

\begin{proof}
    The image of any given $g \in G \setminus A$ in the quotient $G / A \cong C_p$ has order $p$ and, as such, $g^p \in A$.
    In particular, $g^p$ commutes with every element $a \in A$ and, clearly, $g^p$ also commutes with $g$.
    We have $\langle A, g \rangle = G$ since $A \le G$ is a maximal subgroup (its index is prime) and $g \in G \setminus A$.
    Therefore, the element $g^p$ commutes with all elements of $G$, \ie $g^p \in Z(G)$ as claimed.
    Lastly, since $p$ divides $\abs{G} = \abs{G : A}\abs{A}$, there exists an element $\tilde g \in G$ of order $p$ by Cauchy's theorem.
    If $p \nmid \abs{A}$, then $\tilde g \not\in A$ by Lagrange's theorem.
\end{proof}

\begin{proof}[Proof of \Cref{thm:abelian_by_c2}]
    If $\abs{Z(G)}$ is even, then \Cref{cor:evencenter} applies, so assume $\abs{Z(G)}$ is odd.
    We claim that $\abs{A}$ is odd as well.
    Suppose otherwise.
    Then there exists some $a \in A$ with $\mathrm{ord}(a) = 2$.
    Note that we cannot have $a \in Z(G)$ for $\abs{Z(G)}$ is odd.
    Let $g \in G \setminus A$ and consider $\tilde a \coloneqq a^g a \in A$.
    Clearly, $a^g \neq a$ for otherwise we would have $a \in Z(G)$.
    Therefore, $\tilde a \neq 1$.
    Since $\tilde a^2 = 1$, we conclude that $\ord(\tilde a) = 2$.
    Moreover $\tilde a^g = a^{gg} a^{g} = a a^g = a^g a = \tilde a$ since $g^2 \in Z(G)$ by \Cref{lem:abelian_by_prime_center}.
    But then we conclude that $\tilde a \in Z(G)$ and, therefore, $\abs{Z(G)}$ would have to be even.

    Finally, since $\abs{A}$ is odd, there exists $t \in G \setminus A$ with $t^2 = 1$ by \Cref{lem:abelian_by_prime_center}.
    Consider the automorphism $\phi \colon A \to A$ with $\phi(a) = a^t$.
    It satisfies $\phi^2 = \mathrm{id}$ and $\phi \neq \mathrm{id}$ since $t^2 = 1$ and $t \not\in Z(G)$, respectively.
    As such, it has order two and we can apply \Cref{thm:abelian_by_c2_split}.
\end{proof}

\subsection{Abelian subgroups of index three}\label{subsec:abelian_index_3}

For an index 3 abelian subgroup $A \leq G$ we are able to show that \cref{conj:main} holds for two cases: first if $A$ is not normal and second if the center of $G$ is trivial.

The general intuition is that, if a group has a lot of commuting elements (as it does when it contains an abelian subgroup of small index), then all of its Cayley graphs will need to have a lot of squares.
However, we can only manage to make this precise with particular hypotheses.
In \Cref{subsec:LargeComm} we investigate this intuition further.

\begin{lemma}\label{lem:abelian_index3_normal}
    Let $G$ be a group and suppose that there exists an abelian subgroup $A\leq G$ such that $\abs{G : A} = 3$ and $A$ is not normal in $G$.
    Then the only geodetic Cayley graph of $G$ is the complete graph.
\end{lemma}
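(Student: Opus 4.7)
The plan is to show that $G$ contains an abelian subgroup of index two, and then invoke \Cref{thm:abelian_by_c2}. The starting point is the action of $G$ by left multiplication on the three left cosets of $A$, which yields a homomorphism $\phi \colon G \to S_3$. Since $A$ is not normal, the image of $\phi$ is not contained in the unique copy of $C_3$ in $S_3$, so $\phi$ must in fact be surjective. Setting $N \coloneqq \ker \phi$ (equivalently, the normal core of $A$), we have $N \trianglelefteq G$, $N \le A$ is abelian, and $G/N \cong S_3$, with $A$ equal to the preimage of an order-two subgroup of $S_3$.

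The key technical step will be to identify $N$ with $Z(G)$. One inclusion is immediate: $Z(G)$ projects into $Z(G/N) = Z(S_3) = 1$, so $Z(G) \subseteq N$. For the reverse, I would show that every conjugate $A^g$ centralizes $N$. Indeed, using that $N$ is normal and $A$ is abelian, for $n \in N$, $g \in G$ and $a \in A$ one has $(gag^{-1})n(gag^{-1})^{-1} = g\big(a\,(g^{-1}ng)\,a^{-1}\big)g^{-1} = g(g^{-1}ng)g^{-1} = n$, where the middle equality uses that $a$ and $g^{-1}ng$ both lie in the abelian group $A$. Since $A$ is not normal, its three distinct conjugates map under $\phi$ to the three distinct order-two subgroups of $S_3$, which together generate $S_3$. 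Consequently, the subgroup of $G$ generated by all conjugates of $A$ surjects onto $S_3$ and already contains $N$, so it equals $G$. Therefore $G \subseteq C_G(N)$ and thus $N \subseteq Z(G)$.

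With $N = Z(G)$ in hand, the rest is quick. Pick any $s \in G$ whose image $\phi(s) \in S_3$ has order three, and set $H \coloneqq Z(G)\langle s\rangle$. Then $H = \phi^{-1}(\langle\phi(s)\rangle)$ has index $[S_3 : C_3] = 2$ in $G$. Every element of $H$ is of the form $zs^i$ with $z \in Z(G)$, and such elements pairwise commute; hence $H$ is abelian. Applying \Cref{thm:abelian_by_c2} to the abelian subgroup $H$ of index two in $G$ then yields the claim.

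The main obstacle is the identification $N = Z(G)$; this is precisely where the hypothesis that $A$ is not normal is used, via the existence of three distinct conjugates whose $\phi$-images generate $S_3$. Without non-normality, the centralizer argument would give only $N \le C_G(N)$, which is far too weak to conclude that $N$ is central. Once this hurdle is cleared, the construction of the index-two abelian subgroup is essentially forced, and \Cref{thm:abelian_by_c2} closes the argument.
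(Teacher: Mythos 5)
Your proof is correct and follows essentially the same route as the paper: both use the coset action $G \to S_3$, show the kernel (the core of $A$) is central because the conjugates of $A$ are abelian and generate $G$, and then apply \Cref{thm:abelian_by_c2} to the abelian index-two preimage of $C_3$. The only cosmetic difference is that the paper deduces centrality from the observation $A \cap A^g \subseteq Z(G)$ rather than via your explicit centralizer computation.
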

\begin{proof}
    Let $A^g \le G$ be a conjugate subgroup with $A \neq A^g$.
    Then $\langle A, A^g \rangle = G$ since $A$ is a maximal subgroup of $G$ (as its index is prime).
    Moreover, we observe that $A \cap A^g \sse Z(G)$ as every element of $A \cap A^g$ commutes with every element of $\langle A, A^g \rangle$.
    
    Now consider the action of $G$ by left-multiplication on the set of cosets $G/A$.
    It gives rise to a homomorphism $\rho \colon G \to S_3$ with $\mathrm{Ker}(\rho) = \{h \in G \mid hgA = gA \text{ for all } g \in G\} = \bigcap_{g\in G} A^g \le Z(G)$.
    The $\rho$-preimage of $C_3 \le S_3$ is an abelian subgroup $\rho^{-1}(C_3) \le G$, as it is an extension of $\mathrm{Ker}(\rho) \leq Z(G)$ by a cyclic group.

    Since $\lvert G : \rho^{-1}(C_3) \rvert = 2$ as $\rho$ is surjective, the statement now follows from \Cref{thm:abelian_by_c2}.
\end{proof}

\begin{remark}\label{rmk:AbelIndex3Center}
   In the situation described in \Cref{lem:abelian_index3_normal}, we have $\lvert G : Z(G) \rvert=6$.
   In \Cref{cor:bound_center_naive} we have proved an inequality relating the size of the center and the size of the group, which in the case at hand gives us the bound $\lvert G \rvert < 36$ (assuming that $G$ violates \cref{conj:main}, we have $\tfrac{5}{4}(\abs{Z(G)} - 1)^2 < \abs{G} = 6 \abs{Z(G)}$ which, since $\abs{Z(G)}$ is necessarily odd by \cref{cor:evencenter}, implies that $\abs{Z(G)} < 6$  and thus $\abs{G} < 36$).
   These groups can be checked by a computer search.

   We note that this alternative proof can be adapted, so as to cover groups with an abelian but \emph{not} normal subgroup of index $5$ or $7$ (the center of such a group has index at most $20$ or $42$ and, thus, by \Cref{cor:bound_center} we need only consider such groups with order at most $300$ and $840$, respectively).
\end{remark}

The other case is that $G$ contains a normal abelian subgroup of index three. For this we can make progress when we restrict to the case where the center is trivial.

\begin{theorem}\label{thm:abelian_by_c3_centerless}
    Let $G$ be a group with $Z(G) = \Triv$ and suppose that there exists an abelian subgroup $A \le G$ such that $\lvert G : A \rvert = 3$.
    Then the only geodetic Cayley graph of $G$ is the complete graph.
\end{theorem}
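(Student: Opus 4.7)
The plan is to derive a contradiction by supposing $\Gamma = \mathrm{Cay}(G,\Sigma)$ is geodetic but not the complete graph.

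First, I would reduce to the case $A \trianglelefteq G$: if $A$ is not normal we are done by \Cref{lem:abelian_index3_normal}. With $A \trianglelefteq G$ and $Z(G) = 1$, \Cref{lem:abelian_by_prime_center} applied with $p = 3$ gives $g^3 \in Z(G) \cap A = \{1\}$ for every $g \in G \setminus A$, so every such $g$ has order $3$. The conjugation action $\phi$ of any $t \in G \setminus A$ on $A$ is then a fixed-point-free automorphism of order $3$, and since $a \phi(a) \phi^2(a)$ is $\phi$-invariant we obtain $a \phi(a) \phi^2(a) = 1$ for every $a \in A$; in particular $3 \nmid |A|$. Picking $t \in G \setminus A$ with $t^3 = 1$ gives $G = A \rtimes \langle t \rangle$, and the bijectivity of $\phi - 1 \colon A \to A$ (a consequence of fixed-point-freeness) lets us conjugate $\Sigma$ by an element of $A$ to arrange $t \in \Sigma$ without loss of generality.

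The core is to find $a \in \Sigma \cap A$ whose entire $G$-conjugacy class $a^G = \{a, \phi(a), \phi^2(a)\}$ lies in $\Sigma$. With such an $a$, the subgroup $H = \langle a^G \rangle \leq A$ is $\phi$-invariant and hence normal in $G$. Since $a$ and $\phi(a)$ are distinct non-inverse commuting generators (note $\phi(a) \neq a$ by fixed-point-freeness and $\phi(a) = a^{-1}$ would force $\phi^2(a) = a$, contradicting $\phi$ having order $3$), \Cref{cor:commgen_subgroup} shows $H$ is complete with respect to $\Sigma$. Then \Cref{cor:complete_normal_subgroup} forces $\Gamma$ to be the complete graph, which is the desired contradiction.

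Producing such $a$ breaks into two sub-steps. (i) Show $\Sigma \cap A \neq \emptyset$. Were every generator in $G \setminus A$, each would have order $3$, and the induced subgraph on $\mathcal{N}(1) = \Sigma$ would be bipartite between $\Sigma \cap At$ and $\Sigma \cap At^{-1}$, because for same-sign generators $s_i, s_j$ the product $s_i^{-1}s_j$ lies in $A \setminus \Sigma$ and yields no edge. By \Cref{lem:partition_neighbors_cliques} every maximal clique in $\mathcal{N}(1)$ would then have size at most $2$, and \Cref{lem:diamonds} would preclude $4$-cycles in $\Gamma$ entirely; combining this with the bound $\alpha_0 |\Sigma|^2 < |G|$ from \Cref{thm:bound_generating_set} (using $m_0 = 3$, so $\alpha_0 = 5/4$) and the $6$-cycle obstruction of \Cref{lem:even_cycle} applied to short cycles forced by the Frobenius structure of $G = A \rtimes C_3$ yields a contradiction. (ii) Show $\phi(a) \in \Sigma$ for some $a \in \Sigma \cap A$. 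The two length-$2$ words $(t, a)$ and $(\phi(a), t)$ both spell $ta = \phi(a)t \in At$, so uniqueness of geodesics forces $ta \in \Sigma$ whenever $\phi(a) \in \Sigma$, and \Cref{lem:diamonds} then upgrades the $4$-cycle $1, t, ta, \phi(a), 1$ into a $4$-clique. The complementary case, where $\phi(a) \notin \Sigma$ for every $a \in \Sigma \cap A$, is excluded by applying \Cref{lem:conjugate_ord_two} across $\phi$-orbits of order-$2$ elements when $|A|$ is even (here the Sylow-$2$ subgroup of $A$ has an $\mathbb{F}_4$-module structure under $\phi$, so iterated use of \Cref{cor:commgen_subgroup} and \Cref{lem:complete_subgroup_join} produces a complete normal subgroup), and by a short-cycle count via \Cref{lem:even_cycle} combined with \Cref{thm:bound_generating_set} when $|A|$ is odd.

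The main obstacle is carrying out sub-steps (i) and (ii) in full generality. Both require a delicate analysis of short cycles in $\Gamma$ and of the specific structure of Frobenius groups $A \rtimes C_3$ with fixed-point-free $\phi$; step (ii) in particular splits according to the parity of $|A|$ and the possible orders of elements in $\Sigma \cap A$, and pulling all of this together is the hardest part of the argument.
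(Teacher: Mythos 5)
Your opening reductions match the paper exactly (normality via \Cref{lem:abelian_index3_normal}, order three for elements of $G \setminus A$ via \Cref{lem:abelian_by_prime_center} and $Z(G)=1$), and your intended endgame --- find $a \in \Sigma \cap A$ with $\phi(a) \in \Sigma$, deduce that $\langle a, \phi(a)\rangle = \langle a^G\rangle$ is a complete normal subgroup via \Cref{cor:commgen_subgroup}, and conclude with \Cref{cor:complete_normal_subgroup} --- is logically sound \emph{if} its hypothesis can be established. But that hypothesis is precisely the branch of the case analysis that the paper's proof disposes of in one line (if all conjugates of $x \in \Sigma \cap A$ lie in $\Sigma$, then $x^G \subseteq \Sigma$ and \Cref{lem:conjugacyclass} immediately forces completeness). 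The entire content of the theorem lies in the branches you do not address: the paper's Claim~(1) must rule out that a conjugate $\psi^{-1}x\psi$ of $x \in \Sigma \cap A$ has geodesic length exactly two of the form $y_1y_2$ with $y_i \in \Sigma \cap A$ (forcing $\psi^{-1}x\psi = x^{\pm 2}$, which is then excluded by a separate order computation using $\psi^3 = 1$), and its Claim~(2) performs a parallel analysis for elements with geodesic $\psi_1\psi_2$, $\psi_i \in \Sigma\setminus A$, which is also what delivers $\Sigma \cap A \neq \emptyset$ in the first place. Your proposal contains no argument for why these configurations cannot occur; it only asserts that the "complementary case" is "excluded by a short-cycle count via \Cref{lem:even_cycle} combined with \Cref{thm:bound_generating_set}," which is not an argument --- no specific even cycle is exhibited, and no inequality is derived.

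Two further concrete problems. First, your justification of sub-step~(ii) is circular: the observation that $ta = \phi(a)t$ gives two length-two spellings only has force once $\phi(a) \in \Sigma$ is already known, which is exactly what (ii) is supposed to prove. Second, for sub-step~(i) your bipartiteness observation about $\mathcal{N}(1)$ when $\Sigma \subseteq G \setminus A$ is correct and does preclude $4$-cycles, but the Frobenius structure of $A \rtimes C_3$ does not obviously force any even cycle of the kind \Cref{lem:even_cycle} requires (the only short relations visible are the triangles $\psi^3 = 1$), so the promised contradiction does not materialize. The paper instead gets $\Sigma \cap A \neq \emptyset$ by taking non-inverse $\psi, \psi' \in \Sigma \setminus A$, noting that exactly one of $\psi\psi'^{\pm 1}$ is a non-trivial element of $A$ of length at most two, and feeding it into Claim~(2). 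As it stands, the proposal is a plausible plan whose two central sub-steps --- acknowledged by you as "the hardest part" --- are missing, and the sketched substitutes would not close them without essentially reconstructing the paper's Claims~(1) and~(2).
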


\begin{proof}
    Assume that $\mathrm{Cay}(G, \Sigma)$ is geodetic but not complete.
    By \Cref{lem:abelian_index3_normal}, $A$ is a normal subgroup of $G$.
    Note that we then have $\mathrm{ord}(\psi) = 3$ for each $\psi \in G \setminus A$ by \Cref{lem:abelian_by_prime_center} and our assumption on $Z(G)$.
    In order to derive a contradiction, we establish two claims regarding the conjugacy classes of elements of $A$ with specific geodesics, in particular, elements of the set $C \coloneqq \{ g \in A \mid \mathrm{geod}(g) \equiv \psi_1\psi_2 \text{ with } \psi_1,\psi_2 \in \Sigma \setminus A \}$.

    \medskip

    \noindent\textit{Claim (1). 
    If $x \in \Sigma \cap A$, then $x^G = \{x, \psi_1\psi_2, \psi_2\psi_1\}$ for some $\psi_1,\psi_2 \in \Sigma \setminus A$ with $\psi_1\psi_2, \psi_2\psi_1 \in C$.}

    \medskip
    
    Since $A \le G$ is a maximal subgroup, there exists some $\psi \in \Sigma \setminus A$.
    Because $G = A \cdot \{1,\psi,\psi^{-1}\}$, we know that $x^G = \{ x, \psi^{-1}x\psi, \psi x\psi^{-1} \}$.
    We also note that $\psi x \psi x \psi x = (\psi x)^3 = 1$ as $\psi x \in G \setminus A$.
    Therefore, the element $\psi x \psi = x^{-1}\psi^{-1}x^{-1}$ has length at most two.
    In turn, we conclude that both of the elements $\psi^{-1} x \psi = \psi (\psi x \psi)$ and $\psi x \psi^{-1} = (\psi x \psi) \psi$ have length at most two.
    
    If one of these elements, $\psi^{-1}x\psi$ say, had length one, then, so would $x\psi$ and $\psi^{-1}x$ by \Cref{lem:diamonds}; hence, so would $\psi(x\psi) = x^{-1}(\psi^{-1}x^{-1})$ and $(\psi^{-1} x)\psi^{-1} = (x^{-1}\psi)x^{-1}$; hence, so would $\psi x$ and $x \psi^{-1}$.
    It follows that the other element $(\psi x) \psi^{-1} = \psi(x \psi^{-1})$ would also have length one.
    But then all elements of the conjugacy class $x^G$ would have length one, \ie $x^G \sse \Sigma$.
    This contradicts \Cref{lem:conjugacyclass}.
    
    Next, we assume that $\mathrm{geod}(\psi^{-1} x \psi) \equiv y_1 y_2$ with $y_1,y_2 \in \Sigma \cap A$.
    This implies $y_1,y_2 \in \{ x^{\pm 1} \}$, for otherwise $\langle \Sigma \cap A \rangle \le G$ is a complete subgroup by \Cref{lem:conjugacyclass} and thus $y_1y_2$ has length one.
    If $\psi^{-1}x\psi = x^{\pm 2}$, then either $\psi x \psi^{-1} = \psi_1\psi_2 \in C$ or, for the same reason as above, $\psi x \psi^{-1} = x^{\mp 2}$.
    
    In the second case, $x$ is conjugate to $x^{-1}$ by transitivity of conjugacy.
    Since $x$ is the only element of length one in $x^G$, this implies $x = x^{-1}$. 
    But then $\psi^{-1} x \psi = x^{\pm 2} = 1$, which is absurd.
    
    In the first case, \ie $x^G = \{x, \tilde x, \psi_1\psi_2 \}$ where $\tilde x = x^{\pm2}$, we have $\psi^{-1}x\psi = \tilde x = x^{\pm 2}$ and, hence, $x = x^{\pm 8}$ as $\psi^3 = 1$.
    If $\psi^{-1}x\psi = x^{-2}$, then $x^9 = 1$ and thus $\psi^{-1}x^3\psi = x^{-6} = x^3 \in Z(G)$.
    But this implies $x^3 = 1$ and, therefore, $x^{-2} = x \in \Sigma$; a contradiction.
    In the case $\psi^{-1}x\psi = x^2$, we first observe that $\psi_2\psi_1 = x$.
    Indeed, $\psi_2 \psi_1$ is conjugate to $x$ and cannot equal $x^2$ or $\psi_1\psi_2$ by uniqueness of geodesics.
    Using this, we obtain $\psi_2x\psi_2^{-1} = x^2$ since $\psi_2 (\psi_1\psi_2) \psi_2^{-1} = x$ and $\psi_2$ has order three.
    Now $\psi_1 x = \psi_2^{-1} \psi_2 \psi_1 x = \psi_2^{-1} x^2 = \psi_2^{-1} \psi_2x\psi_2^{-1} = x \psi_2^{-1} \in \Sigma$, since it has two expressions of length two.
    But then the same is true for $x^2 = \psi_2 (\psi_1 x)$; hence $x^2 \in \Sigma$, which contradicts our assumption that $x^2$ has length two.
    
    The only remaining possibility is that $\mathrm{geod}(\psi^{-1} x \psi) \equiv \psi_1 \psi_2$ and, hence, $\mathrm{geod}(\psi x \psi^{-1}) \equiv \psi_2 \psi_1$ for some $\psi_1, \psi_2 \in \Sigma \setminus A$. 
    This establishes Claim (1).

\medskip
    \noindent\textit{Claim (2).
    If $\psi_1, \psi_2 \in \Sigma \setminus A$ with $g = \psi_1\psi_2 \in C$, then for some $x \in \Sigma \cap A$ \[
        g^G = \{x, \psi_1\psi_2, \psi_2\psi_1\}
        \quad\text{or}\quad
        g^G = \{x^2, \psi_1\psi_2, \psi_2\psi_1\}.
    \]}
    
    Clearly, $\psi_1\psi_2$ and $\psi_2\psi_1$ are distinct elements of $g^G$. 
    Moreover, $\psi_2\psi_1$ also has length two by Claim~(1).
    The third element of $g^G$ is $h \coloneqq \psi_1^{-1}\psi_2\psi_1^{-1} = \psi_2^{-1}\psi_1\psi_2^{-1}$, which therefore has length at most two.
    The claim is trivial if $h$ has length one.
    Suppose that $h$ has length two and $\mathrm{geod}(h) \equiv \psi'_1\psi'_2$ with $\psi'_1,\psi'_2 \in \Sigma \setminus A$.
    Clearly, $\psi'_2\psi'_1 \neq \psi'_1\psi'_2$ and $\psi'_2\psi'_1 \in g^G$.
    Moreover, $\psi'_2\psi'_1$ also has length two by what we have just shown.
    It follows that $\{\psi'_1, \psi'_2\} = \{\psi_1, \psi_2\}$, which is clearly impossible.
    
    If $\mathrm{geod}(h) \equiv y_1y_2$ with $y_1, y_2 \in \Sigma \cap A$, then $y_1 = y_2$ by uniqueness of geodesics and the fact that $y_1y_2 = y_2y_1$.
    Hence $h = x^2$ with $x = y_1 = y_2 \in \Sigma$.
    This completes our proof of the claim.

    \medskip

    We finally derive the desired contradiction.
    To this end, recall that there exists some $\psi \in \Sigma \setminus A$.
    Furthermore, we cannot have $\Sigma = \{\psi^{\pm 1}\}$ for $G$ is not cyclic.
    As such, we either have $\Sigma \cap A \neq \emptyset$ or there exists some $\psi' \in \Sigma \setminus A$ with $\psi' \not\in \{\psi^{\pm1}\}$.
    In the latter case, $\Sigma \cap A \neq \emptyset$ by Claim~(2).
    
    Now, choose some $x \in \Sigma \cap A$ and let $\psi_1,\psi_2 \in \Sigma \setminus A$ with $x^G = \{x, \psi_1\psi_2, \psi_2\psi_1 \}$ as in Claim~(1).
    We then have $x = \psi_1^{-1}\psi_2\psi_1^{-1} = \psi_2^{-1}\psi_1\psi_2^{-1}$.
    Let $\psi_3 \coloneqq x\psi_1 = \psi_1^{-1}\psi_2 \in \Sigma$ and observe $\psi_3 \not \in \{\psi_1^{\pm1}, \psi_2\}$.
    In fact, we also have $\psi_3 = \psi_1^{-1}\psi_2 \neq \psi_2^{-1}$ for otherwise $\psi_2\psi_1 = 1$.
    Now note that $\psi_3^{-1}\psi_2^{-1} = x$ and thus $\psi_2^{-1}\psi_3^{-1} \in x^G \setminus \{x\} = \{ \psi_1\psi_2, \psi_2\psi_1\}$, which contradicts uniqueness of geodesics.
\end{proof}

\subsection{Nilpotent groups}
In every nilpotent group, certain iterated commutators evaluate to central elements.
This fact, together with our results concerning central elements developed in \Cref{subsec:center}, imposes restrictions on the structure of a nilpotent group with an alleged non-complete geodetic Cayley graph.
For groups of nilpotency class two, we obtain further restrictions based on a more detailed analysis of the involved commutator maps; see \Cref{cor:SpecialNilpotent2} below.

Recall that a group $G$ is \emph{nilpotent} if $G = \Triv$ or $G / Z(G)$ is nilpotent.
If $G$ is nilpotent, then there exists a number $s$ such that $[g_1, \dotsc, g_{s+1}] \coloneqq [[g_1, \dotsc, g_s], g_{s+1}] = 1$ for all $g_1, \dotsc, g_{s+1} \in G$.
The smallest such number $s$ is the \emph{nilpotency class} of $G$.
A group is nilpotent if and only if it is a direct product of $p$-groups; see \cite[Theorem~5.2.4]{Robinson96book}.
In particular, every nilpotent group of even order has even-order center.

\begin{theorem}\label{thm:nilpotent}
    Let $G$ be a nilpotent but not cyclic group of nilpotency class $s$ and suppose that
    \begin{equation*}
        p \nmid \frac{\exp(G)}{\exp(Z(G))}
    \end{equation*}
    for each odd prime $p < 3 \cdot 2^{s-1} - 2$.
    Then the only geodetic Cayley graph of $G$ is the complete graph.
\end{theorem}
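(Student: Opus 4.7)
The plan is to argue by contradiction, splitting on the parity of $\lvert G\rvert$. If $\lvert G\rvert$ is even, then since $G$ is a direct product of its Sylow subgroups, $Z(G)$ contains the non-trivial $2$-group $Z(P)$ (where $P$ is the Sylow $2$-subgroup); thus $\lvert Z(G)\rvert$ is even and \Cref{cor:evencenter} applies. The case $s=1$ is abelian and covered by \Cref{cor:abelian}. I will therefore suppose $s\ge 2$, $\lvert G\rvert$ odd, and (for contradiction) that $\mathrm{Cay}(G,\Sigma)$ is geodetic but not complete. Since $G$ is non-cyclic, its Cayley graph cannot be an odd cycle, so \Cref{cor:center} gives $Z(G)\cap\Sigma=\emptyset$, and \Cref{thm:center_geodesics} provides a uniform central geodesic length $k\ge 3$.

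The crux consists of two estimates on $k$. First, I will show that $k$ divides $\exp(G)/\exp(Z(G))$. Pick $z\in Z(G)$ with $\operatorname{ord}(z)=\exp(Z(G))$ and write its unique geodesic as $c^k$ with $c\in\Sigma$. For each $1\le i<k$ the word $c^i$ has length less than $k$, so by \Cref{thm:center_geodesics} the element $c^i$ is either trivial or non-central; since $c^k=z\ne 1$, in fact $c^i\ne 1$, whence $\operatorname{ord}(c)=k\cdot\operatorname{ord}(c^k)=k\cdot\exp(Z(G))$, and this divides $\exp(G)$. Second, I will use that $G$ has nilpotency class exactly $s$ to find $a_1,\dots,a_s\in\Sigma$ with $[a_1,\dots,a_s]\ne 1$: this is possible because $\gamma_s(G)\subseteq Z(G)$ is generated by $s$-fold commutators of elements of $\Sigma$. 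A simple induction ($L_s+2=2(L_{s-1}+2)$ with $L_2=4$) gives the iterated commutator word length $L_s=3\cdot 2^{s-1}-2$, so $k\le L_s$.

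To conclude, I will note that $L_s$ is even for every $s\ge 2$, so any odd prime $p\mid k$ satisfies $p\le k\le L_s$ and hence $p<L_s$. The hypothesis of the theorem then rules out $p\mid\exp(G)/\exp(Z(G))$, contradicting $p\mid k\mid\exp(G)/\exp(Z(G))$. Thus $k$ has no odd prime divisor, so $k\ge 3$ forces $k\ge 4$ to be even; but $\lvert G\rvert$ odd makes $\exp(G)/\exp(Z(G))$ odd, and $k\mid\exp(G)/\exp(Z(G))$ yields the final contradiction. I expect the subtlest point to be the divisibility $k\mid\exp(G)/\exp(Z(G))$, which hinges on the interplay between \Cref{thm:center_geodesics} and the choice of $z$ of maximum order; once that is in place the remaining argument is a clean parity and numerical clean-up using the commutator bound and the evenness of $L_s$.
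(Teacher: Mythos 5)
Your proof is correct and follows the same skeleton as the paper's: dispose of even order via \Cref{cor:evencenter} (even order center), of the abelian case via \Cref{cor:abelian}, then in the odd-order, class $s\ge 2$ case use a non-trivial iterated commutator $[a_1,\dots,a_s]\in Z(G)\setminus 1$ of word length $L_s=3\cdot 2^{s-1}-2$ together with \Cref{thm:center_geodesics} to bound the central geodesic length $k$, and finally contradict the hypothesis on $\exp(G)/\exp(Z(G))$. Where you genuinely diverge is the divisibility step, and your version is arguably cleaner: you prove $k\mid\exp(G)/\exp(Z(G))$ outright by showing $\operatorname{ord}(c)=k\cdot\exp(Z(G))$ for the central root $c$ of a central element of maximal order (the point being that $k$ is the least positive $i$ with $c^i\in Z(G)$, so $\langle c\rangle\cap Z(G)=\langle c^k\rangle$ and $k\mid\operatorname{ord}(c)$). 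The paper instead fixes an odd prime $p$ dividing the central geodesic length $q$, takes $\tilde z\in Z(G)$ of order $p^n$ with $p^n$ the largest power of $p$ dividing $\exp(Z(G))$, writes $\tilde z=\tilde a^{q}$, and shows $\tilde a^{q/p}$ has order $p^{n+1}$, which forces $p\mid\exp(G)/\exp(Z(G))$; your argument avoids this element construction and yields a stronger conclusion. Two small points to tighten: the claim $c^i\neq 1$ for $1\le i<k$ is best justified by noting that prefixes of geodesics are geodesics (so the element $c^i$ has geodesic length exactly $i\ge 1$), not merely by $c^k\neq 1$; and your parity trick (an odd prime $p\le k\le L_s$ with $L_s$ even forces $p<L_s$) is a valid substitute for the paper's strict inequality $q<L_s$, which the paper gets from the observation that the geodesic of a central element must be a power of a single generator and hence strictly shorter than the commutator word. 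Both routes close the argument; I see no gap in yours.
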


\begin{proof}
  All finite nilpotent groups are direct products of $p$-groups.
  Thus, if $2$ divides $\abs{G}$, then there must be an order $2$ element in the center and the statement follows from \Cref{cor:evencenter}.
  If $G$ is abelian, then the statement is a consequence of \Cref{cor:abelian}.
  We now assume that $s > 1$ and $2 \nmid \abs{G}$, and that there exists a geodetic Cayley graph $\mathrm{Cay}(G, \Sigma)$ which is not complete.

  In every finite group of nilpotency class $s$, by~\cite[Lemma 2.6]{ClementMZ17nilpotent}, there are generators $a_1, \dots, a_s \in \Sigma$ such that $z = [a_1, a_2, \dots, a_s] \neq 1$.
  Further, since $G$ is nilpotent of class $s$, we have $z \in Z(G)$; hence, by \cref{cor:center} and \Cref{thm:center_geodesics}, $z = a^k$ with $a \in \Sigma$ and $k \ge 3$ equal to the order of $a$ in the quotient group $G / Z(G)$.
  Since $a^k$ is a geodesic, its length is shorter than the length of the commutator, which is $3\cdot 2^{s-1} - 2$.
  
  Let $p$ be a prime divisor of $k = pr$. 
  Then $p$ also divides $\abs{G : Z(G)}$, $\abs{G}$, and $\abs{Z(G)}$.
  Furthermore, $p$ is odd and $p \leq k < 3 \cdot 2^{s-1} - 2$.
  Let $p^n$ be the largest power of $p$ dividing $\exp(Z(G))$ (note that $n \geq 1$). 
  Let $\tilde z \in Z(G)$ with $\mathrm{ord}(\tilde z) = p^n$.
  Once more, $\tilde z = \tilde a^k$ for some $\tilde a \in \Sigma$ by \Cref{thm:center_geodesics}.
  Now consider the element $b = \tilde a^r$ (where $k = pr$ as above).
  Clearly, $(b^{p})^{p^{n}} = \tilde z^{p^{n}} = 1$.
  As such, $\mathrm{ord}(b) = p^{n+1}$ and, thus, $p^{n+1}$ divides $\exp(G)$. 
  We conclude that $p$ divides $\exp(G) / \exp(Z(G))$, which contradicts our assumption on $G$.
\end{proof}

\begin{remark}\label{rem:nilpotent}
    The condition in \Cref{thm:nilpotent} is satisfied if no odd prime $p < 3\cdot 2^{s-1} - 2$ divides the order of $G$ or, more generally, if $p^2$ does not divide $\exp(G)$ for any such prime $p$.
    If, for example, $G$ is a nilpotent group of nilpotency class two with $9 \nmid \exp(G)$, then $G$ satisfies \Cref{conj:main}; if $G$ is of nilpotency class three and neither $3$, $5$, nor $7$ divide $\abs{G}$, then $G$ satisfies \Cref{conj:main}.
\end{remark}

We now turn to the case of nilpotency class two.
In the following we always assume that $G$ is nilpotent group of nilpotency class two and that $\mathrm{Cay}(G, \Sigma)$ is a counterexample to \Cref{conj:main}.
Recall that, by \Cref{cor:evencenter}, this implies that $2 \nmid \abs{G}$ and, by \Cref{thm:nilpotent} (see also \Cref{rem:nilpotent}), that $3 \mid \abs{Z(G)}$.

Throughout, we will consider the subset $\Delta \coloneqq \{ x \in \Sigma \mid x^3 \in Z(G) \setminus \{1\} \} \subseteq \Sigma$.
Since $[x,y] \in Z(G) \setminus \{1\}$ for some $x,y \in \Sigma$ (see the proof of \Cref{thm:nilpotent}), the length of central geodesics is three according to \Cref{thm:center_geodesics}.
As such, the map $\alpha \colon G \to G$ given by $g \mapsto g^3$ induces a bijection from $\Delta$ onto $Z(G) \setminus \{1\}$.
As there exists an element of order three in $Z(G)$, there exists an element of order nine in $\Delta$; in particular, $\Delta \neq \emptyset$.

\begin{lemma}\label{lem:nilpotent_class2_commutator1}
    Let $x,y \in \Sigma$ with $x \in \Delta$ or $y \in \Delta$.
    Then $[x,y] = 1$ if and only if $\{x^{\pm1}\} = \{y^{\pm1}\}$.
\end{lemma}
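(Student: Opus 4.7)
The plan is to dispatch the equivalence direction by direction. The backward implication is immediate: if $\{x^{\pm 1}\} = \{y^{\pm 1}\}$, then $x$ and $y$ both lie in the cyclic subgroup $\langle x \rangle$ and so commute. The forward direction is where the hypothesis on $\Delta$ is actually used.

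For the forward implication I would argue by contradiction. Since the hypothesis and the conclusion are both symmetric in $x$ and $y$ (and $[x,y]=1$ is equivalent to $[y,x]=1$), I may assume without loss of generality that $x \in \Delta$. Suppose, toward a contradiction, that $[x,y] = 1$ while $\{x^{\pm 1}\} \neq \{y^{\pm 1}\}$, i.e.\ $y \notin \{x^{\pm 1}\}$. Then \Cref{cor:commgen_subgroup} (commuting generators) immediately produces a complete subgroup $\langle x, y \rangle \leq G$ with respect to $\Sigma$.

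The membership $x \in \Delta$ guarantees $x^3 \in Z(G) \setminus 1$, and in particular $x^3$ is a non-identity element of $\langle x, y \rangle$. Completeness of this subgroup therefore puts $x^3$ inside $\Sigma$, so that $\Sigma \cap Z(G) \neq \emptyset$. At this point \Cref{cor:center} (generator in center) forces $\mathrm{Cay}(G, \Sigma)$ to be either a complete graph or an odd cycle. The first possibility is ruled out by the standing assumption that the Cayley graph under consideration is not complete; the second would make $G$ cyclic, contradicting the standing assumption that $G$ has nilpotency class exactly two. Either way we obtain the desired contradiction.

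I do not anticipate any real obstacle: each step simply invokes a previously established corollary. The only things to check are that the symmetric reduction to $x \in \Delta$ is legitimate (it is, since both sides of the biconditional are symmetric in $x$ and $y$) and that $x^3 \neq 1$, which is built into the definition of $\Delta$ and is precisely what makes the central element $x^3$ a genuine element of $\Sigma$ rather than the identity.
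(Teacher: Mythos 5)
Your proof is correct and follows essentially the same route as the paper: both deduce from \Cref{lem:conjugacyclass} (via its corollary on commuting generators) that $\langle x,y\rangle$ is a complete subgroup, observe that $x^3\in Z(G)\cap\Sigma$, and derive a contradiction with \Cref{cor:center} under the standing assumption that the Cayley graph is non-complete. You merely spell out two points the paper leaves implicit (the trivial backward direction and the exclusion of the odd-cycle case), so nothing further is needed.
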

\begin{proof}
    If $[x,y] = 1$ and $\{x^{\pm1}\} \neq \{y^{\pm1}\}$, then $\langle x, y \rangle \le G$ is a complete subgroup by \Cref{cor:commgen_subgroup}.
    If, furthermore, $x \in \Delta$, then $x^3 \in Z(G) \cap \Sigma$.
    But then $\mathrm{Cay}(G, \Sigma)$ is complete by \Cref{cor:center}.
\end{proof}

\begin{lemma}\label{lem:nilpotent_class2_commutator2}
    Let $x \in \Sigma$ and $y_1,y_2 \in \Sigma \setminus \{x^{\pm1}\}$ with $y_1 \in \Delta$.
    Then $[x, y_1] = [x, y_2]$ implies $y_1 = y_2$.
\end{lemma}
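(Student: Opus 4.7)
The strategy is to argue by contradiction, assuming $y_1 \neq y_2$, and setting $c \coloneqq [x, y_1] = [x, y_2] \in Z(G)$. First, $c \neq 1$: otherwise $y_1 \in \Delta$, $y_1 \neq x^{\pm 1}$, and $[x, y_1] = 1$ would contradict \Cref{lem:nilpotent_class2_commutator1}. Using $y_1^3 \in Z(G)$ and the bilinearity of commutators in class two, $c^3 = [x, y_1^3] = 1$, so $\ord(c) = 3$; by \Cref{thm:center_geodesics} the central geodesic has length three.

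The key observation is that both $\ov{y_1}\, x\, y_1$ and $\ov{y_2}\, x\, y_2$ are length-three words evaluating to the element $xc$. If $d(1, xc) = 3$, then uniqueness of geodesics forces these two words to coincide letter by letter, yielding $y_1 = y_2$ and the desired contradiction. The task therefore reduces to ruling out shorter geodesics for $xc$. If $d(1, xc) = 0$, then $c = x^{-1} \in Z(G) \cap \Sigma$, contradicting $\Sigma \cap Z(G) = \emptyset$ (which holds by \Cref{cor:center} since the Cayley graph is not complete). If $d(1, xc) = 1$, then $c = x^{-1}(xc)$ is a product of two generators and so has geodesic length at most two, contradicting length three.

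The main work is the case $d(1, xc) = 2$. Writing $xc = uv$ as a length-two geodesic, the path $u, v, x^{-1}$ provides a second length-three geodesic of $c$ which, by uniqueness, must coincide letter for letter with the central geodesic $c = a^3$; this forces $u = v = a$ and $x = a^{-1}$, so $c = x^{-3}$ and $x \in \Delta$. Together with $c^3 = 1$ this yields $\ord(x) = 9$, and in particular $d(1, x^{\pm 2}) = 2$. Setting $h \coloneqq y_1 y_2^{-1}$, class-two bilinearity gives $[x, h] = 1$, and $d(1, h) \leq 2$ since $h$ is a product of two generators.

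It remains to exclude each case for $d(1, h)$. If $d(1, h) = 1$ and $h \neq x^{\pm 1}$, then $\langle x, h \rangle$ is a complete subgroup by \Cref{cor:commgen_subgroup}, forcing $x^3 \in \Sigma \cap Z(G)$, a contradiction. If $d(1, h) = 1$ and $h = x^{\pm 1}$, then a short calculation using $y_2^{-1} x y_2 = xc = x^{-2}$ yields $y_2 = y_1 x^{\pm 2}$, which produces two distinct length-two paths from $y_1$ to $y_2$ (one through $1$, one through $y_1 x^{\pm 1}$), contradicting geodeticity. If $d(1, h) = 2$ and $h = x^{\pm 2}$, uniqueness of the length-two geodesic of $x^{\pm 2}$ forces $y_1 \cdot y_2^{-1} \equiv x \cdot x$ (or $\ov{x}\cdot\ov{x}$) as words, giving $y_1 \in \{x^{\pm 1}\}$, against hypothesis. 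The main obstacle will be the remaining subcase $d(1, h) = 2$ with $h \neq x^{\pm 2}$: here \Cref{lem:commgen_coset} applied with $w = h$ and $a = x$ guarantees that every element of the coset $h\langle x \rangle$ has geodesic length exactly two, and I would aim to derive a contradiction by analysing these forced length-two representations, using the identities $hx = xh$, $[h, y_1] = [h, y_2] = [y_1, y_2]$, and \Cref{lem:nilpotent_class2_commutator1}, either to exhibit two distinct length-two geodesics for some $hx^k$ or to force $[y_1, y_2] = 1$ and hence $y_2 \in \{y_1^{\pm 1}\}$.
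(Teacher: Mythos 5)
Your argument tracks the paper's proof closely up to the point where you establish $x=\bar a\in\Delta$, $c=x^{-3}$ and $x^{y_i}=xc=x^{-2}$ for $i=1,2$ (the paper reaches exactly this configuration, writing $z=a^3$ and $xz=x^{-2}$), and all of the subcases you do complete are correct. But the proof is not finished: the final subcase $d(1,h)=2$ with $h=y_1y_2^{-1}\notin\{x^{\pm2}\}$ is only a stated intention (``I would aim to derive a contradiction\dots''), with no actual argument. That subcase is not vacuous and is where the real content lies, so as written this is a genuine gap, not a cosmetic omission. (A smaller quibble: within that subcase you would also first need to dispose of $h\in\langle x\rangle$, e.g.\ $h=x^{\pm4}$, before invoking the ``moreover'' clause of \Cref{lem:commgen_coset}; those instances are easy, since the flat-coset lemma then puts all of $\langle x\rangle$, including $x^3=c^{-1}$, at distance at most two, but they are not covered by your case list.)

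The missing idea is that you do not need $h=y_1y_2^{-1}$ at all. From $x^{y_i}=x^{-2}$ and $\ord(x)=9$ one gets $y_ix y_i^{-1}=x^{4}$ for $i=1,2$ (the paper phrases this as $(x^k)^{y}=x^{-2k}=x$ with $\ord(x)=2k+1$). Since $y_1\neq y_2$, the words $y_1x\overline{y_1}$ and $y_2x\overline{y_2}$ are two distinct length-three representatives of $x^{4}$, so $d(1,x^{4})\le 2$. As $x^{4}\notin\{1,x^{\pm1},x^{\pm2}\}$, its geodesic is not a power of $x$ of its own length, and \Cref{lem:commgen_coset} applied to the coset $x^4\langle x\rangle=\langle x\rangle$ forces every power of $x$, in particular $c=x^{-3}$, to have length at most two --- contradicting the fact that central geodesics have length three. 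This one step replaces your entire case analysis on $d(1,h)$ and closes the proof; I would recommend restructuring the second half of your argument around it.
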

\begin{proof}
    Let $z \coloneqq [x, y_1] = [x, y_2] \in Z(G)$.
    Then $z \neq 1$ by \Cref{lem:nilpotent_class2_commutator1} and, thus, $z = a^3$ with $a \in \Delta$.
    Assuming $y_1 \neq y_2$, the element $xz = x^{y_1} = x^{y_2}$ has two distinct representatives of length three; therefore, it has length at most two.
    But then $\bar x = a$ by uniqueness of geodesics, as $\bar x (xz) = z = a^3$.
    In particular, this shows that $x = \bar a \in \Delta$ and that $x^{y} = xz = x^{-2}$ for $y \in \{ y_1, y_2 \}$.

    Since $2 \nmid \abs{G}$, the order of $x$ cannot be even.
    Let $\mathrm{ord}(x) = 2k + 1$ and note that $k \ge 4$ as $x  \in \Delta$.
    Then $(x^k)^{y} = x^{-2k} = x$ for $y \in \{ y_1, y_2\}$.
    As such, $x^k = y_1x{y_1^{-1}} = y_2x{y_2^{-1}}$ has length at most two.
    Since $x^k \notin \{ 1, x^{\pm1}, x^{\pm2} \}$, we can apply \Cref{lem:commgen_coset} (with $w$ being the geodesic of $x^k$) to conclude that all elements of the coset $x^k \gen{x}$ have length at most two.
    In particular $x^{-3} = z \in Z(G)$ has length at most two; a contradiction!
\end{proof}

As usual, we denote the \emph{commutator (or derived) subgroup} of $G$ by $G'=\gen{[a,b];a,b\in G}$, and the \emph{Frattini} subgroup of $G$ by $\Phi(G)$, \ie $\Phi(G)$ is the intersection of all maximal subgroups of $G$.

Using the above, we will now show that a nilpotent group $G$ of class two that violates \Cref{conj:main} would have to be \emph{special}, \ie it would satisfy $G' = \Phi(G) = Z(G)$.
In some sense, such groups are as non-abelian as possible given the constraint $G' \leq Z(G)$ imposed by $G$ being nilpotent of class two.

\begin{proposition}\label{prop:nilpotent_class2}\label{cor:SpecialNilpotent2}
    Suppose that $G$ is a nilpotent group of class two and a counterexample to \Cref{conj:main}. 
    Then $\mathrm{exp}(G) = 9$ and $G' = \Phi(G) = Z(G) \not\cong C_3$, \ie $G$ is a special but not extra-special $3$-group.
\end{proposition}

\begin{proof}
    Suppose that $\mathrm{Cay}(G, \Sigma)$ violates \Cref{conj:main} and, as above, let $\Delta \coloneqq \{ x \in \Sigma \mid x^3 \in Z(G) \setminus \{1\} \} \subseteq \Sigma$ be the set of central roots.
    The subgroup $H \coloneqq \langle \Delta \rangle$ clearly satisfies $H' \le G' \le Z(G) \le H \le G$.
    Furthermore, $H / Z(G)$ is an elementary abelian $3$-group.
    We treat the cases $\Sigma \setminus \Delta = \emptyset$ and $\Sigma \setminus \Delta \neq \emptyset$ separately.
    
    We first deal with the case $\Sigma \setminus \Delta = \emptyset$, \ie $\Sigma = \Delta$ and $H = G$.
    As such, $G / Z(G)$ is an elementary abelian $3$-group and thus so is $G'$ (which is the image of $G/Z(G) \times G/Z(G)$ under the bilinear map induced by the commutator map).
    To show that $G' = Z(G)$, fix $x \in \Sigma$ and consider the homomorphism 
    \[ \psi \colon G \to Z(G);\quad g \mapsto [x, g]. \]
    By  \Cref{lem:nilpotent_class2_commutator2}, $\psi$ is injective on $\Sigma \setminus \{x^{\pm 1}\}$.
    But then $Z(G) \setminus \psi(G)$ contains at most two elements.
    From $\psi(G) \neq \Triv$, we conclude that $\psi(G) = Z(G)$ by Lagrange's Theorem.
    Hence, $G' = Z(G)$.
    From this we conclude that $\exp(G) = 9$ and that the Frattini subgroup $\Phi(G)$ coincides with $G' = Z(G)$ (in $p$-groups, the Frattini subgroup is the smallest subgroup with elementary abelian quotient; hence, $G' \leq \Phi(G) \leq Z(G)$ since $G / Z(G)$ is elementary abelian).
    Finally, note that $\abs{Z(G)} - 1 = \abs{\Delta} = \abs{\Sigma} \geq 4$ since $G$ is not cyclic.
    Therefore, we can conclude that $Z(G) \not\cong C_3$.
    This completes the proof for the case $\Sigma \setminus \Delta = \emptyset$.

    \smallbreak
    
    We now assume that there exists an element $u \in \Sigma \setminus \Delta$ and consider the homomorphism 
    \[ \phi \colon G \to Z(G);\quad g \mapsto [u, g]. \]
    By \Cref{lem:nilpotent_class2_commutator1}, $\phi(\Delta) \subseteq Z(G) \setminus \{1\}$ and, by \Cref{lem:nilpotent_class2_commutator2}, $\phi$ is injective on $\Delta$.
    Since $\abs{\Delta} = \abs{Z(G) \setminus \{1\}}$, we then have $\phi(\Delta) = Z(G) \setminus \{1\}$ and, therefore, $\phi(H) = \mathop{\mathrm{Im}}(\phi) = Z(G)$.
    We claim that $\phi(\Sigma \setminus \Delta) = \{1\}$.
    Indeed, if $y_2 \in \Sigma \setminus \Delta$ would satisfy $\phi(y_2) \neq 1$, then $\phi(y_2) = \phi(y_1)$ for some $y_1 \in \Delta$.
    Hence, $y_1 = y_2$ by \Cref{lem:nilpotent_class2_commutator2}.
    
    The homomorphism $\phi$ factors through the quotient $G \to G/Z(G)$ and the inclusion $G' \to Z(G)$.
    As such, $G' = Z(G)$ is isomorphic to a quotient of $H / Z(G)$; hence, $Z(G)$ is an elementary abelian $3$-group.
    It follows that $\Delta \sse \Sigma_2$, \ie $\mathrm{ord}(x) = 9$ for all $x \in \Delta$.
    On the other hand, we have $u^3 \in Z(G)$ since $[u^3, x] = [u, x]^3 = \phi(x)^3 = 1$ for each $x \in \Sigma$.
    This implies $u^3 = 1$ since $u \in \Sigma \setminus \Delta$.
    Moreover, the same holds for all $u \in \Sigma \setminus \Delta$.
    Since all elements of $\Sigma$ have order three in the quotient $G / Z(G)$, the latter is an elementary abelian $3$-group.
    As in the previous case, we conclude that $\mathrm{exp}(G) = 9$ and $G' = \Phi(G) = Z(G)$.

    It remains to show that $Z(G) \not\cong C_3$. 
    By way of contradiction, let us assume that $Z(G) \cong C_3$.
    Then $\Delta$ consists of precisely two elements, \ie $\Delta = \{ x^{\pm 1} \}$, and $\phi(\Delta) = Z(G) \setminus \{1\} = \{ x^{\pm3} \}$.
    Upon replacing $u$ by $u^{-1}$ if necessary, we may then assume that $[x,u] = x^{-3}$.
    It follows that $uxu = u^{2}x^{-2} = u^{-1}x^{-2}$ and this element has length at most two by uniqueness of geodesics.
    Hence, $(uxu)u = uxu^{-1}$ also has length at most two and so does $x^3 = x^{-1}(uxu^{-1})$.
    But this contradicts \Cref{thm:center_geodesics} since $x^3 \in Z(G) \setminus \{1\}$.
    As such, $Z(G) \not\cong C_3$.
\end{proof}

To further explore the applicability of \Cref{thm:nilpotent} and \Cref{cor:SpecialNilpotent2}, we have examined all non-abelian groups of order $p^k$ with $p \in \{3,5,7,11\}$ and $k \leq 7$ in GAP \cite{GAP4} using its SmallGrp library \cite{SmallGrp}.
The results of this examination are summarized in \Cref{tbl:nilpotent}.
Note that such groups have nilpotency class $s \leq 6$.
Moreover, by \Cref{thm:nilpotent}, every $p$-group of nilpotency class~$s$ with $p \geq 3 \cdot 2^{s-1} - 2$ satisfies \Cref{conj:main}; this threshold is indicated by horizontal lines in the table.
\begin{table}[h!]\centering
    \begin{tabular}{ccccccc}
    \toprule
    $\quad p \quad$ & $s\leq 1$ & $s=2$ & $s=3$ & $s=4$ & $s=5$ & $s=6$ \\
    \midrule
    $3$ & $45$ & $587 \,/\, 1{,}926$ & $150 \,/\, 6{,}362$ & $36 \,/\, 1{,}386$ & $0 \,/\, 180$ & $0 \,/\, 6$ \\
    \cmidrule{3-3}
    $5$ & $45$ & $7{,}256$ & $247 \,/\, 23{,}073$ & $119 \,/\, 3{,}382$ & $\phantom{0}5 \,/\, 1{,}227$ & $0 \,/\, 9$ \\
    $7$ & $45$ & $26{,}914$ & $255 \,/\, 76{,}783$ & $131 \,/\, 8{,}034$ & $60 \,/\, 2{,}140$ & $15 \,/\, 198$ \\
    \cmidrule{4-4}
    $11$ & $45$ & $204{,}912$ & $514{,}627$ & $139 \,/\, 26{,}882$ & $62 \,/\, 5{,}170$ & $19 \,/\, 402$ \\
    \bottomrule
    \end{tabular}
    \caption{The number of groups of order $p^k$ with $k \leq 7$ and nilpotency class $s$}{covered by \Cref{thm:nilpotent} and \Cref{cor:SpecialNilpotent2} / total number of such groups (if distinct).}
    \label{tbl:nilpotent}
\end{table}

\subsection{Groups with large commutativity degree}\label{subsec:LargeComm}

Many of our results address \Cref{conj:main} in the case of groups that are, in some sense, close to being abelian.
This property can be quantified by a group $G$'s \emph{commutativity degree} $\mathbf{P}(G)$, which is the probability  that two randomly chosen elements of $G$  commute, \ie
\[
    \mathbf{P}(G) \coloneqq \frac{\lvert \{ (g,h)\in G\times G \mid gh = hg \}\rvert}{\lvert G \times G \rvert}.
\]

The interested reader is referred to the survey by Das, Nath, and Pournaki \cite{DNP13CommutativityDegree} for further details and historical context.
We will content ourselves here with the following observation.

\begin{theorem}\label{thm:commutativity_degree}
    \Cref{conj:main} holds for every group $G$ with $\mathbf{P}(G) > \frac{11}{32}$.
\end{theorem}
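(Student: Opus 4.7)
The plan is to combine two ingredients: the known classification of finite groups with large commutativity degree (due to Rusin and subsequent refinements), and the structure theorems already established in this paper. Recall that $\mathbf{P}(G) = k(G)/|G|$, where $k(G)$ is the number of conjugacy classes of $G$, and that $\mathbf{P}$ is multiplicative under direct products. Gustafson's inequality gives $\mathbf{P}(G) \le 5/8$ for any non-abelian $G$, with equality if and only if $|G/Z(G)| = 4$; Rusin's work then enumerates all further values $\mathbf{P}(G) > 11/32$ and, crucially, classifies all non-abelian groups achieving them. Every such group turns out to have a very restricted structure: up to a direct factor that is abelian, $G$ is either $2$-of-class-$2$ with $|G/Z(G)|\in\{4\}$, an extra-special $3$-group of order $27$, or one of a finite list of further small nilpotent or metabelian groups.

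First I would dispense with the abelian case by \Cref{cor:abelian}. For non-abelian $G$ with $\mathbf{P}(G) > 11/32$, I would split into cases according to Rusin's list. The case $|G/Z(G)|=4$ yields an abelian subgroup $H \leq G$ of index $2$ (any of the three proper subgroups of $G$ containing $Z(G)$), so \Cref{thm:abelian_by_c2} applies. If the extremal factor is an extra-special $3$-group of order $27$, \Cref{cor:nilpotent_class2_extra-specail} applies. The remaining cases in the list are nilpotent of class $2$ with $G' = \Phi(G) = Z(G)$, covered by \Cref{cor:SpecialNilpotent2}, or they have $|G|$ so small that \Cref{thm:mainCompSearch} already covers them.

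To pass from the extremal factor to $G$ itself, I would exploit the direct product decomposition $G = G_0 \times A$ with $A$ abelian that arises from Rusin's classification. If $|A|$ is even, then $|Z(G)|$ is even and \Cref{cor:evencenter} settles the matter immediately. Otherwise $A$ has odd order and the only new ingredient needed is the trivial observation that the structural hypotheses of \Cref{thm:abelian_by_c2}, \Cref{cor:SpecialNilpotent2}, and \Cref{cor:nilpotent_class2_extra-specail} are preserved under taking a direct product with an abelian group (e.g.\ an abelian subgroup of index $2$ in $G_0$ yields one in $G_0 \times A$).

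The main obstacle is verifying that the list produced by Rusin's classification really is exhausted by the cases we can handle; in particular one must check that no entry on the list slips through all of \Cref{cor:evencenter}, \Cref{thm:abelian_by_c2}, \Cref{cor:SpecialNilpotent2}, \Cref{cor:nilpotent_class2_extra-specail}, and \Cref{thm:mainCompSearch}. This amounts to a finite bookkeeping exercise: enumerate each group appearing in Rusin's list (together with its abelian direct factors), identify the relevant structural feature, and invoke the corresponding theorem. A minor subtlety is that the threshold $11/32$ is sharp precisely because it is the largest value at which a group not of one of the above types can still appear, so pushing any lower would require additional theoretical input beyond what has been developed here.
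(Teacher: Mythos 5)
Your overall strategy is the same as the paper's: invoke Rusin's classification of non-abelian groups with $\mathbf{P}(G) > \frac{11}{32}$ and check each structural type against the results already established (\Cref{cor:abelian}, \Cref{cor:evencenter}, \Cref{thm:abelian_by_c2}, \Cref{cor:SpecialNilpotent2}). However, there is a genuine gap in how you read the classification. Rusin does not produce a finite list of groups up to abelian direct factors; he constrains the isomorphism types of $G'$, $G' \cap Z(G)$ and $G/Z(G)$ (eight cases, tabulated in the paper). A group realizing one of these types need not decompose as $G_0 \times A$ with $A$ abelian and $G_0$ from a finite list: central extensions generally do not split (e.g.\ $C_3 \rtimes C_4$ has $G/Z(G) \cong S_3$, $G' \cong C_3$, $G' \cap Z(G) = 1$ but is not $S_3 \times C_2$; central products such as $D_8 * C_4$ give further non-split examples with $\mathbf{P} = \frac58$). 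Since your passage from ``the extremal factor'' to $G$ itself rests entirely on this decomposition, and since your fallback for the residual cases is \Cref{thm:mainCompSearch}, the argument breaks down exactly where it matters: the types with $G/Z(G) \cong S_3$, $D_{10}$, $S_3 \times C_2$ or $C_3 \rtimes C_4$ admit representatives of arbitrarily large order (the center is unconstrained), so no finite computer search can absorb them, and these groups are neither nilpotent nor handled by your $\lvert G/Z(G)\rvert = 4$ case.

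The repair, which is what the paper actually does, is to argue directly from the quotient data without any splitting assumption. Whenever $G' \cap Z(G)$ contains a copy of $C_2$ (which happens in six of the eight rows of Rusin's table), the center has even order and \Cref{cor:evencenter} applies. When $G/Z(G) \cong S_3$ or $D_{10}$, the preimage in $G$ of the cyclic index-two subgroup of the quotient is an abelian subgroup of index two (it is generated by $Z(G)$ together with $G'$, hence abelian), so \Cref{thm:abelian_by_c2} applies to $G$ itself, whatever its order. In the one remaining case, $C_3 \cong G' \le Z(G)$, the group is nilpotent of class two and \Cref{cor:SpecialNilpotent2} applies. Your instinct that the proof is ``a finite bookkeeping exercise'' is right, but the bookkeeping has to be done on the invariants $(G', G'\cap Z(G), G/Z(G))$ rather than on a purported list of direct factors, and that bookkeeping is precisely the content you have left unverified.
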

\begin{proof}
    For abelian groups, this follows from \Cref{cor:abelian} (see also \cite{wrap108882}).
    If $G$ is non-abelian and $\mathbf{P}(G) > \frac{11}{32}$, then according to Rusin \cite[p.\ 246]{Rusin79} the structure of $G$ must be as indicated in \Cref{tbl:commutativity_degree_11_32}.

    \begin{table}[!ht]\centering
    \begin{tabular}{cccc}
        \toprule
        \multicolumn{3}{c}{\textbf{Group Structure}} & \textbf{Applicable Result} \\
        \cmidrule{1-3}
        $G'$ & $G' \cap Z(G)$ & $G / Z(G)$ & \\
        \midrule
        $C_2$ & $C_2$ & $C_2^{2r}\; (r \geq 1)$ & \Cref{cor:evencenter} \\
        $C_3$ & $\Triv$ & $S_3$ & \Cref{thm:abelian_by_c2} \\
        $C_2^2$ or $C_4$ & $C_2$ & $D_8$ & \Cref{cor:evencenter} \\
        $C_2^2$ & $C_2^2$ & $C_2^3$ or $C_2^4$ & \Cref{cor:evencenter} \\
        $C_3$ & $C_3$ & $C_3^2$ & \Cref{cor:SpecialNilpotent2} \\
        $C_5$ & $\Triv$ & $D_{10}$ & \Cref{thm:abelian_by_c2} \\
        $C_6$ & $C_2$ & $S_3 \times C_2$ or $C_3 \rtimes C_4$ & \Cref{cor:evencenter} \\
        \bottomrule
    \end{tabular}
    \caption{The possible structures of a non-abelian group $G$ with $\mathbf{P}(G) > \frac{11}{32}$.
    }\label{tbl:commutativity_degree_11_32}
    \end{table}

    In most of these cases, $Z(G)$ contains a subgroup isomorphic to $C_2$; hence, \Cref{cor:evencenter} applies.
    If $G/Z(G) \cong S_3$ or $G / Z(G) \cong D_{10}$, then $G$ contains an abelian subgroup of index two corresponding to $C_3 \leq S_3$ or $C_5 \leq D_{10}$ in $G / Z(G)$, respectively.
    As such, we can apply \Cref{thm:abelian_by_c2}.
    In the remaining case, \ie if $C_3 \cong G' \leq Z(G)$, then $G$ is covered by \Cref{cor:SpecialNilpotent2}. 
\end{proof}

\section{Experiments.}\label{sec:experiments}

We now turn to the exhaustive computer search to check which groups of order up to $1024$, as well as all even orders up to $2014$ and all non-abelian finite simple groups up to order $5000$, have a generating set that yields a geodetic Cayley graph.
The aim of this experiment was to either verify \Cref{conj:main} for as many group orders as possible, or to find a group and a generating set that yields a non-trivial geodetic Cayley graph, \ie a graph that is neither complete nor an odd cycle.
For our code, see

\centerline{\url{https://osf.io/9ay6s/?view_only=37e18301e4e74e12bfe4e07b90b924c0}.}

Improving the computer search has been a major motivation for the theoretical work in the previous sections.
Important results in this regard are the bounds on the generating set discussed in \Cref{sec:bounds} as well as the results covering entire classes of groups, the most important of which is \Cref{cor:evencenter} excluding all groups with even-order center.

In turn, the results from the computer search also influenced our theoretical work.
For example, at one point in the development of our computer search algorithms, groups which were semidirect products with $C_2$ had a long running time. 
This directed our theoretical work to focus on such groups, leading to \Cref{thm:abelian_by_c2} showing that \Cref{conj:main} holds for all groups with an abelian subgroup of index two.

\subsection{Overview}

Our approach consists of three stages.
The first is the \emph{filtering stage} in which we identify the relevant groups which are not covered by our theoretical results, and thus need to be considered in our computer search.
We realized this stage using GAP \cite{GAP4}.

The second stage is a \emph{preprocessing stage}, also realized in GAP, during which we compute the information required for the search and store it in a JSON file.
Besides the multiplication table describing the group, the most important information computed in this stage comprises a set of \emph{forbidden elements} and a set of \emph{required subsets}.

Forbidden elements are elements which cannot be part of any geodetic generating set except for the complete one.
A required subset is a set of elements of which each geodetic generating set needs to contain at least one.
We will give more details on how we compute and use these below.

The third and final stage is the actual search, which we have implemented in Rust.
Obviously, enumerating all generating sets is infeasible even for relatively small groups.
For example, already for the symmetric group $S_5$, which has $120$ elements, there are $2^{72}$ potential generating sets (symmetric subsets not containing the identity element).
To circumvent this problem, we discard generating sets based on the theoretical results described in the previous sections.
We implement this using a binary counter for enumerating generating sets, which allows us to systematically skip the respective ranges.
For each of the remaining generating sets, we test whether or not the resulting Cayley graph is geodetic and report those that are.

\subsection{Filtering}
 
We used GAP \cite{GAP4} and its SmallGrp library \cite{SmallGrp} to obtain a list of all finite groups up to order $1024$ relevant to our search. (In a second run we repeated the experiment also filtering out all odd-order groups, which we report on below.)
When generating this list of groups, we ignored the following.

\begin{itemize}
    \item All abelian groups (\Cref{cor:abelian}).
    \item All groups with even-order center (\Cref{cor:evencenter}).
    \item All groups with a large center (\Cref{cor:bound_center} and \Cref{prop:bound_center4}).
    \item All groups with abelian index-2 subgroups (\Cref{thm:abelian_by_c2}).
    \item The groups with abelian index-3 subgroups covered by \Cref{thm:abelian_by_c3_centerless}.
    \item The nilpotent groups covered by \Cref{thm:nilpotent} and \Cref{cor:SpecialNilpotent2}.
\end{itemize}

Note that, while there are approximately $50\cdot 10^9$ groups up to order $1024$, most of those are $2$-groups, which all have an even-order center.
Excluding the $2$-groups there are only $1206579$ groups of order at most $1024$, and after excluding the groups in the above list only $3197$ groups remain.
We provide more details on the number of groups falling into each of the above categories in \Cref{tab:filtering}.

\begin{remark}
    The smallest group with a center larger than the bound from \Cref{cor:bound_center} is $C_{13} \times A_4$.
    Its center has order $13$ -- just above the bound, which in this case is $12$.
    
    To find an example for \Cref{prop:bound_center4}, we have to look a bit further.
    The proposition excludes 3 as a prime divisor of the order of the group, and, 5 and 7 are excluded as prime divisors of the index of the center.
    The smallest group covered by \Cref{prop:bound_center4} that is not already covered by \Cref{cor:bound_center} is $C_7 \times (C_{13} \rtimes C_4)$.
    Its center has order 7, just larger than the bound of $6$.
\end{remark}

During filtering we only verify those bounds on the size of the center based on \Cref{cor:bound_center} and \Cref{prop:bound_center4} (with $m_0 = 3$).
More precise bounds on the size of non-complete geodetic generating sets, and thus also on the size of the center, are computed within the initialization step of our search algorithm.
Therein, we utilize most results of \Cref{sec:bounds} (with optimal parameters). Due to tighter bounds, we were able to exclude an additional $240$ groups from the search.

\begin{table}[t]
  \centering
  \begin{tabular}{lr}
    \toprule
    \textbf{Total Number of Groups} & $1{,}206{,}579$ \\\midrule
    Abelian Groups & $2{,}034$ \\
    Groups with Center of Even Order & $1{,}200{,}151$ \\
    Groups with Abelian Subgroup of Index $2$ & $989$ \\
    Nilpotent Groups as in \Cref{cor:SpecialNilpotent2} & $170$ \\
    Nilpotent Groups as in \Cref{thm:nilpotent} & $18$ \\
    Groups with Abelian Subgroup of Index $3$ as in  \Cref{thm:abelian_by_c3_centerless} & $86$ \\
    Groups with Large Center as in \Cref{cor:bound_center} and \Cref{prop:bound_center4} & $274$ \\\midrule
    \textbf{Remaining Groups} & $3{,}197$ \\
    \bottomrule
  \end{tabular}
  \smallskip
  \caption{Number of groups of order up to $1024$ and excluding $2$-groups which are caught by the different filtering steps. The filtering is performed in the same order as in the table, and each group is only counted towards the first category it matches.}
  \label{tab:filtering}
\end{table}

\begin{remark}\label{rem:even_order_filtering}
    For filtering the groups of even order up to 2014, we use the same method.
    However, there is one special case: the groups of order $1536 = 3\cdot 2^9$.
    There are more than $4 \cdot 10^8$ of them~-- too many for running through the entire filtering stage.
    However, most of them have a normal Sylow $3$-subgroup.
    Because such groups have an even-order center, they can be safely ignored by \Cref{cor:evencenter}.
    Therefore, we only run the filtering procedure for the remaining groups, indexed $408526598$--$408641062$ in the 
    SmallGrp 
    library.
\end{remark}

\subsection{Preprocessing}\label{sec:preprocessing}

The main objective of the preprocessing stage is to compute the forbidden elements and the required subsets of each group.
As mentioned above, an element is forbidden if, whenever it is part of a geodetic generating set, the associated Cayley graph is necessarily complete.
Note that, since we filter out abelian groups in the filtering stage, here we do not need to consider the possibility that the Cayley graph is an odd cycle.
The set of forbidden elements comprises all elements $g \in G$ such that $h=g$ or $h=g^2$ is nontrivial and satisfies $h^G \sse \{h^{\pm1}\}$; see \Cref{lem:conjugacyclass} and \Cref{lem:commgen_square_pt2}.
In particular, this includes central elements (\Cref{cor:center}) and their square roots (\Cref{thm:center_geodesics}).

A required subset is a set of which each geodetic generating set needs to contain at least one element.
In the preprocessing we compute the following sets, which we know to be required.
\begin{itemize}
    \item Each conjugacy class of elements of order two (\Cref{lem:conjugate_ord_two}).
    \item Each normal subgroup of index two (\Cref{lem:index_two}).
    \item Each complement of a maximal subgroup (as we want a generating set).
\end{itemize}
There is one other family of required subsets: the potential roots of each central element.
However, since these sets are smaller, and thus their inclusion is more effective, when we know the length of central geodesics, we do not add these sets in the preprocessing stage, but later in the search algorithm.

At this stage we also take advantage of automorphisms to reduce the size of the required subsets and thus reduce the number of generating sets we need to look at.
The procedure is to go through the required subsets one by one.
For each, we compute the orbits of its elements under the automorphism group and select one element from each orbit.
As we are interested only in symmetric generating sets, we consider the orbits of an element and its inverse as a single orbit.
Continuing with the next required subset, we no longer use the full automorphism group, but only the point-wise stabilizer of the elements that were selected in the previous required subsets.
This is justified by the following straight-forward observation:

\begin{lemma}
    Let $R \sse G$ and $Y \sse G$ and let $\tilde R \sse R$ be a system of representatives of $R/\operatorname{Stab}(Y)$.\footnote{Here, by abuse of notation, we write $R/\operatorname{Stab}(Y)$ to denote the set $R/{\sim}$ where $x \sim y$ if $\phi(x) = y$ for some $\phi \in \operatorname{Stab}(Y)$.}
    Then for all $X \sse G$ with $R \cap X \neq \emptyset$, there exists some $\phi \in \operatorname{Stab}(Y)$ such that $\phi(X) \cap \tilde R \neq \emptyset$.
\end{lemma}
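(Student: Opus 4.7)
The statement to prove is essentially a definitional unpacking, so the plan is to simply chase the definitions. The only substantive point is that, in order for the notation $R / \operatorname{Stab}(Y)$ to make sense, we implicitly need $R$ to be stable under the action of $\operatorname{Stab}(Y) \leq \operatorname{Aut}(G)$; in the applications of this lemma in the paper this holds because the required subsets (conjugacy classes, normal subgroups, etc.) are preserved by every automorphism of $G$.

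First, I would pick some element $r \in R \cap X$, which exists by the assumption $R \cap X \neq \emptyset$. Next, by the defining property of $\tilde R$ as a system of representatives of the $\operatorname{Stab}(Y)$-orbits in $R$, there exists $\phi \in \operatorname{Stab}(Y)$ mapping $r$ to the (unique) orbit representative $\tilde r \coloneqq \phi(r) \in \tilde R$. Finally, since $r \in X$, we have $\tilde r = \phi(r) \in \phi(X)$, so $\tilde r \in \phi(X) \cap \tilde R$ witnesses that this intersection is non-empty, as required.

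There is no genuine obstacle here. The content of the lemma is conceptual rather than technical: it formalizes why it is safe, during the search, to replace a required subset $R$ by a set of orbit representatives $\tilde R$ once a partial selection $Y$ has already been fixed. Any generating set $X$ that intersects $R$ can be moved by an element of $\operatorname{Stab}(Y)$ (which fixes $Y$ pointwise, hence does not disturb the earlier choices) to a generating set intersecting the smaller set $\tilde R$, and the resulting Cayley graph is isomorphic to the original one, so geodeticity is preserved.
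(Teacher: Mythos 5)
Your proof is correct and is exactly the definition-chasing argument the paper has in mind; indeed the paper states this as a ``straight-forward observation'' and omits the proof entirely. Your side remark about $R$ needing to interact sensibly with the $\operatorname{Stab}(Y)$-action is also consistent with the paper's convention of choosing orbit representatives inside $R$ when an orbit only partially meets $R$.
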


In cases where the required subset contains only part of an orbit under the action of the automorphism group, we take care to select representatives that are part of the original set.
This way, for each generating set that contains an element of each of the original required subsets, there is a generating set which contains at least one element of each of the smaller required subsets obtained after applying the automorphism group.

Finally, we discard required subsets that are supersets of smaller ones.

\begin{remark}\label{rem:split}
    For the finite simple groups $\mathrm{PSL}(2,q)$ with $q \in \{17,19,16\}$, for parallelization, we split the computation of the search algorithm into several chunks. 
    This is implemented by generating several instances during the preprocessing stage with different required and forbidden subsets.
\end{remark}

\subsection{The search algorithm}

For a group $G$, we fix a subset $C\sse G\setminus \{1\}$ such that $\abs{\{g, g^{-1}\} \cap C} = 1$ for each $g\in G\setminus\{1\}$.
We call the elements of $C$ \emph{candidates}. 
In this way, each inverse-closed subset $\Sigma \sse G \setminus \{1\}$ (\ie each potential generating set) bijectively corresponds to a \emph{candidate set} $X = \Sigma \cap C \subseteq C$.
Note that if more than half of the elements of $G$ are of order two, then, according to Liebeck and MacHale \cite{Liebeck1972}, $G$ has an abelian subgroup of index two or $Z(G)$ has even order.
Since such groups were excluded during the filtering stage, we may assume at most half the elements of $G$ have order two; hence, $\abs{C} \leq \frac{1}{2}\abs{G}+\frac{1}{4}\abs{G}=\frac{3}{4}\abs{G}$. 

In the following, we identify $C$ with the set of numbers $\{0, \dots, \abs{C}-1\}$; in particular, we fix an order on~$C$.
We enumerate the potential generating sets of a group by enumerating all subsets of $C$ in the order of a binary counter from $0$ to $2^{\abs{C}}$, where a binary number naturally corresponds to a subset of $C$.
The main loop of our search algorithm is presented in \Cref{alg:main_loop}.
Incrementing the binary counter is done in lines 6 to 14. In line 18 we check whether the resulting Cayley graph is geodetic and connected (\testgeo{}, for details see below).
We use a stack to keep track of the current candidate
set $X$, the corresponding increment $I$, and some additional information that is not displayed in \Cref{alg:main_loop}.
During the execution of the algorithm, we maintain the following invariants of the stack (which we consider to grow upwards).
\begin{enumerate}[{label=(I${}_\arabic*$)}]
    \item \label{invariantI} If $(\canset,I)$ is above $(\canset',I')$, then $\canset \supseteq \canset'$ and $I < I'$.
    Moreover, $\canset \cap [I', \abs{C}] = \canset' \cap [I', \abs{C}]$.
    \item If $(\canset,I)$ is directly above $(\canset',I')$, then additionally $\canset \cap [I, \abs{C}] = (\canset' \cap [I, \abs{C}]) \cup \{I\}$.
\end{enumerate}

We use several pruning methods to shortcut the counting process. 
The variable $I_{\mathrm{next}}$ in \Cref{alg:main_loop} serves as the index of the bit to be increased next (meaning that $I_{\mathrm{next}}=0$ yields a usual increment by one); by increasing $I_{\mathrm{next}}$, we can skip over certain values for the counter.

The pruning methods, described in detail below, 
rely upon 
\begin{itemize}
    \item bounds on the size of the generating set, 

    \item the forbidden candidate array (for simplicity not included in the pseudocode \Cref{alg:main_loop}),

    \item the required subsets described above in \cref{sec:preprocessing} (handled in line 5 in \Cref{alg:main_loop}), 

    \item saturating the generating set (\fillin),
    \item handling of collisions at distance 3 discovered during the check whether the Cayley graph is geodetic (\handlethreecolls).
\end{itemize}

Finally, for groups of even order and groups with non-trivial center, we employ further pruning techniques during \testgeo and \handlethreecolls{}.

\begin{algorithm}
\begin{minipage}[t]{.45\textwidth}
\small\textbf{procedure} \textsc{CheckGroup}($G$)
\begin{algorithmic}[1]\small
  \State $(\canset, I) \gets (\emptyset, \infty)$
  \State Stack.\textsc{Push}($\canset, I$)
  \State $I_{\mathrm{next}} \gets 0$

  \While{Stack $\neq \emptyset$}
    \State $I_{\mathrm{next}} \gets \max \{I_{\mathrm{next}}, \textsc{Required}(\canset)\}$
    
    \While{$I_{\mathrm{next}} \in \canset$}
        \State $I_{\mathrm{next}} \gets I_{\mathrm{next}} + 1$
    \EndWhile
    \If{$I_{\mathrm{next}} \geq \abs{C}$}
        \State\Return
    \EndIf
    \While{$I_{\mathrm{next}} \geq I$}
        \State $(\canset,I) \gets {}$Stack.\textsc{pop}()
    \EndWhile
    \State $I_{\mathrm{last}} \gets I$ 
    
    \State $(\canset, I) \gets (\canset \cup \{I_{\mathrm{next}}\}, I_{\mathrm{next}})$
    \State Stack.\textsc{Push}$(\canset ,\text{$I$})$
    \State $I_{\mathrm{next}} \gets \textsc{CheckGenSet}(\canset, I, I_{\mathrm{last}})$
  \EndWhile
  \algstore{searchalg}
\end{algorithmic}
\end{minipage}
\begin{minipage}[t]{.5\textwidth}
\small\textbf{procedure} \textsc{CheckGenSet}($\canset, I, I_{\mathrm{last}}$)
\begin{algorithmic}[1]\small\algrestore{searchalg}
    \If{\fillin{}($\canset, I, I_{\mathrm{last}}$) fails}
       \State\Return $I$
    \EndIf

    \State $T \gets {}$\testgeo{}($\canset$)
    \If{$T = \textsc{Geodetic}$}
        \State\textbf{output} $\canset$
        \State\Return $0$
    \EndIf
    \If{\handlethreecolls{}($T, \canset, I, I_{\mathrm{last}}$) fails}
       \State\Return $I$
    \Else
        \State\Return $0$
    \EndIf
\end{algorithmic}
\end{minipage}
\caption{Outline of our search algorithm.}
\label{alg:main_loop}
\end{algorithm}

\subsubsection*{Bounds on the number of generators.}
In \Cref{sec:bounds} we developed several bounds on the size of non-complete geodetic generating sets.
As \cref{prop:notCG} and \cref{lem:small_order_diameter_two_implies_c5} already cover the diameter-two case of our search, the most general such bound is due to \Cref{thm:bound_generating_set}: 
it gives us an upper bound of $c \cdot \sqrt{n}$ where $n$ is the order of the group.
The constant factor $c$ is at most $2 / \sqrt{5}$ but, depending on the size of the group, the factor can be even smaller.
In fact, we compute the optimal bound given by \Cref{thm:bound_generating_set} in our program.
If the group has non-trivial center, we use \Cref{thm:bound_generating_set+center} to obtain an even smaller bound in the order of $\sqrt[3]{n}$.
We use these bounds to prune the search tree as follows:
whenever there are too many bits set to one in the counter (\ie $\abs{X}$ is too big~-- detected either in \fillin or \handlethreecolls), we increment the least-significant bit currently set to one by setting $I_{\mathrm{next}} \gets I$ (see lines 17 and 23 of \Cref{alg:main_loop}).

In some cases, the bound computed at this stage is impossible to satisfy and, therefore, the group possesses no geodetic generating set other than the complete one.
In this way, we exclude an additional $240$ (of the remaining $3197$) groups from the search.

\subsubsection*{Forbidden candidates.}
As a first improvement, we use a bit array indicating candidates that are forbidden given the other candidates already contained in $\canset$.
For each entry on the stack (stack frame) we keep a separate forbidden candidate array. 
Initially, the forbidden candidate array of the first stack frame comprises the forbidden candidates computed during preprocessing (\cref{sec:preprocessing}).
When creating a new stack frame, the forbidden candidates of its predecessor are copied; afterwards, additional candidates might be marked as forbidden on the new stack frame.
An additional candidate can be marked as forbidden if its inclusion in $\canset$ would 
\begin{itemize}
    \item lead to a generating set that is too large,
    \item require the inclusion of a candidate that is already forbidden, or
    \item violate the order in which we search through the generating sets.
\end{itemize}
These conditions are tested for during the \fillin and \handlethreecolls procedures.
For details we refer to the respective paragraphs below.

We use the forbidden candidate array in the \fillin and \handlethreecolls procedures as well as in the counter logic.
While omitted from the description in \Cref{alg:main_loop} for simplicity, its incorporation is rather straightforward: whenever $I_{\mathrm{next}}$ is a forbidden candidate, it is incremented.

\subsubsection*{Required subsets.}

We incorporate the required subsets computed during the preprocessing step as detailed in \Cref{sec:preprocessing}.
Whenever there is a required subset $R$ with $R \cap X = \emptyset$, we increase $I_{\mathrm{next}}$ to the smallest candidate contained in $R$ (see line 5 of \Cref{alg:main_loop}).
This dispenses with all candidate sets in between which do not contain an element of the required subset and, thus, need not be considered.

If multiple required subsets are disjoint from $X$, we apply the following heuristic: 
choose a required subset to be satisfied first such that the smallest candidate contained in it is maximal with respect to the order~on~$C$.

It may happen that the smallest candidate of a required subset is forbidden. 
In this case we simply consider the smallest candidate $I$ in the required subset that is not forbidden. 
If $I$ is larger than $I_\mathrm{last}$, the candidate of the previous stack frame, then we cannot satisfy the required subset by pushing a new stack frame without violating the order in which we search through the generating sets (Invariant~\ref{invariantI}).
Therefore, we assign $I_{\mathrm{next}} \gets I_{\mathrm{last}}$ in order to skip a number of candidate sets to which we cannot add any candidate from the required subset.

Finally, we remark that further required subsets are created during the \handlethreecolls procedure and at the beginning of the search for groups with non-trivial center.

\subsubsection*{Saturating the generating set \textnormal{(\fillin)}.} 

The most crucial improvement of our algorithm over a na\"ive search is based on \Cref{lem:diamonds}.
If $a, b, c, d \in \Sigma$ with $ab = cd \neq 1$ for a geodetic generating set $\Sigma \sse G$, then $(ab), (c^{-1}a) \in \Sigma$.
Therefore, whenever we add a new candidate to $\canset$, we test whether a product of the newly added element(s) and some other element of the generating set $\Sigma$ associated with $X$ equals a different product of two elements of $\Sigma$.
If so, we also add this product to $\Sigma$ by adding the corresponding candidate to $X$.
By repeating this we obtain a candidate set $X'$.
If the corresponding generating set $\Sigma'$ is too large, or if $X'$ contains a forbidden candidate, then the \fillin procedure fails and marks the current increment $I$ as forbidden on the previous stack frame.

In order to avoid considering the same generating set multiple times, we only want to add a candidate $J$ to $\canset$ during the \fillin procedure if $J < I$; otherwise, the resulting candidate set would be considered again after incrementing the counter.

As such, if $X'$ contains a candidate $J$ with $I < J$, then the \fillin procedure fails and, thus, in line 17 of \Cref{alg:main_loop} we set $I_{\mathrm{next}} \gets I$ skipping a number of candidate sets that would violate the search order.
Moreover, if $X'$ contains a candidate $J$ with $I_{\mathrm{last}} < J$, then we also mark $I$ as forbidden on the previous stack frame, which corresponds to $I_{\mathrm{last}}$.
This is because introducing $I$ as a candidate in any stack frame above the previous one would violate the second part of Invariant~\ref{invariantI}.

\subsubsection*{Testing whether the Cayley graph is geodetic \textnormal{(\testgeo)}.}
As the Cayley graph is vertex-transitive, it suffices to check whether geodesics from the origin $1 \in G$ to each other vertex exist and are unique.
We implemented this using a breadth-first search.
If during this breadth-first search we encounter an element with two different geodesics of length three, then we record this element for later handling.
We collect a certain number of such elements depending on the order of the group and other parameters such as, e.g., the order of the center and the number of elements of order two.
In the case of groups with non-trivial center we implemented some further checks as detailed below.

\subsubsection*{Handling collisions at distance three \textnormal{(\handlethreecolls)}.}
While testing whether the Cayley graph is geodetic, we compute a list of elements which have multiple geodesics of length three.
We aim to extend the generating set to a geodetic generating set.
To achieve this, we need to add new generators (by adding the respective candidates) such that each element in the list either becomes a generator or can be uniquely written as a product of two generators.

Since there are many options as for which generators to add, for each of the recorded collisions, we create a temporary required subset. 
It remains valid until the current candidate is removed from the candidate set, \ie the lifetime of these new required subsets is tied to the current stack frame.

The temporary required subset associated with a collision is constructed as follows.
Suppose that adding a single candidate $J$ to $X$ would resolve the collision.
Then we insert $J$ into the required subset, unless invoking \fillin with $X' = X \cup \{ J \}$ fails.
For pairs of candidates whose addition would resolve the collision, we proceed similarly, but only add the larger of the two candidates to the required subset.

Finally, if all calls to \fillin{} fail, and thus the required subset is empty, then the call to \handlethreecolls fails as we cannot resolve the collision.
Crucially, if all calls to \fillin{} forbid the corresponding inclusion, then we also forbid $I$ on the stack frame corresponding to $I_\mathrm{last}$.

\subsubsection*{Modifications for groups of even order.}

If during \testgeo we find an element of order two among the collisions at distance three, then the only choice is to add the element itself to the generating set.
Recall that, by \Cref{lem:conjugate_ord_two}, such an element cannot have a geodesic of length two.
As such, we do not construct a required subset in this case (by calling \handlethreecolls), but add the respective candidates directly to the candidate set instead (or possibly forbid the current increment $I$) and continue afterwards with \fillin.

Based on the same observation, as a further improvement for groups of even order (more precisely, for groups whose order is divisible by six) we have implemented the following.
During \fillin, we check whether a generator is of order six, and thus generates a subgroup isomorphic to $C_6$, or whether two generators together generate a subgroup isomorphic to $S_3$.
If so, we add all non-trivial elements of the corresponding subgroup to the generating set $\Sigma$ since such a subgroup is complete with respect to any geodetic generating set as the following lemma shows.

\begin{lemma}\label{lem:saturate_c6_d6}
Let $\mathrm{Cay}(G, \Sigma)$ be geodetic and $H \leq G$ with $\gen{H \cap \Sigma} = H$. 
If $\abs{H} = 6$, then $H$ is complete.
\end{lemma}
\begin{proof}
Suppose that $H \cong C_6$.
If we have a generator $g \in H \cap \Sigma$ of order six, then $g^3 = {\ov g}^{3}$ has length one by \cref{lem:conjugate_ord_two}, \ie $g^3 \in \Sigma$.
It follows that $H \cap \Sigma$ always contains some element $h$ of order two and at least one other element $k$ which commutes with $h$.
Hence, the subgroup $H = \gen{h, k}$ is complete by \cref{cor:commgen_subgroup}. 

Similarly, let $g,h \in \Sigma$ generate the subgroup $H$ isomorphic to $S_3$ and assume (without loss of generality) that $g$ is of order two. 
If $h$ is of order three, then we have $gh = h^{-1}g$ and, hence, $gh \in \Sigma$ and similarly $hg \in \Sigma$ and $(hg)h = h(gh) \in \Sigma$ showing that $H = \{1, g, h, gh, hg, hgh\}$ is complete.
If $h$ is of order two, then $ghg = hgh$ is of order two; hence, by \cref{lem:conjugate_ord_two}, $ghg \in \Sigma$. Thus, we have an entire conjugacy class $g^H = \{g,h, ghg\}$ contained in $\Sigma$.
Therefore, the subgroup $H = \gen{g,h}$ is complete by \cref{lem:conjugacyclass}.
\end{proof}

\subsubsection*{Modifications for groups with non-trivial center.}
For groups with non-trivial center, we compute all possible lengths of central geodesics given by~\Cref{thm:center_geodesics}.
Then we run the search repeatedly, once for each length of central geodesics.
We add the corresponding central roots as required subsets and add the central roots which are too short as forbidden elements.
When testing whether the Cayley graph is geodetic, we additionally check whether there are central elements with geodesics shorter than the length of central geodesics.
If that happens, then the only geodetic graphs the Cayley graph can be extended to by adding more generators are graphs with a smaller length of central geodesics.

\subsection{Experimental results}\label{sec:exp_results}
Our experiments were conducted on a machine with an AMD Ryzen~9 5900X CPU (12 cores, 24 threads, 3.7\,GHz) and 128\,GB of RAM.
Running the experiments for all groups up to order 1024 took 299 hours of total CPU time, and we were able to establish

\ThmB*

With parallelization, the running time was dominated by a few ``difficult'' groups.
See \Cref{tab:experiments} for the running time of select groups and \Cref{fig:running_time} for a plot of the running time of every group of order up to 1024 compared to the group's order.

\begin{figure}[ht]
    \includegraphics{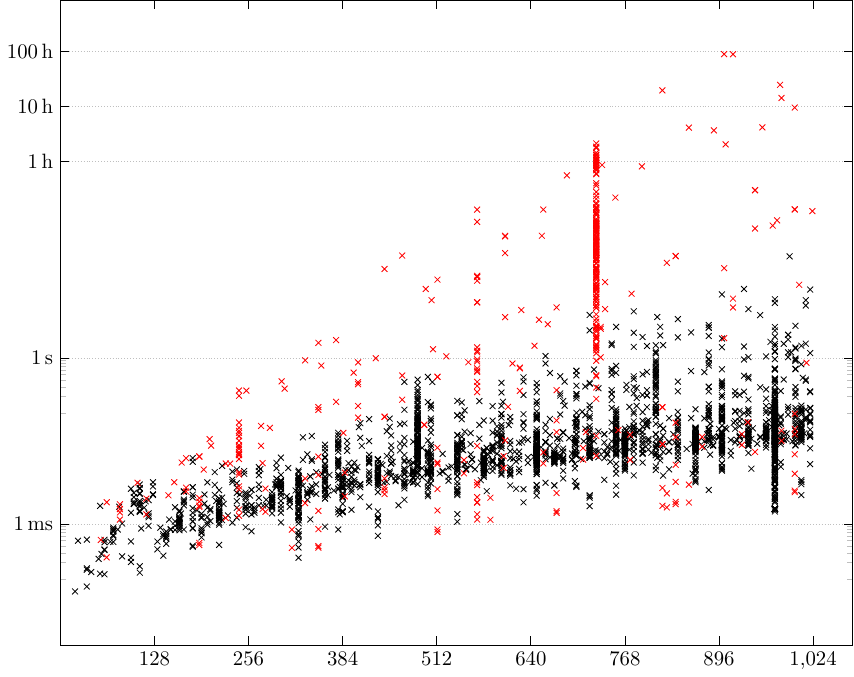}
    \caption{Running time of our computer search for all groups of order up to $1024$;
    groups of even order are marked in black, those of odd order are marked in red.}\label{fig:running_time}
\end{figure}

As it can be seen from \Cref{tab:experiments} and \Cref{fig:running_time}, our search is substantially faster for groups of even order.
Therefore, for groups of even order we extended our search up to order 2014.
We did not go beyond that since larger groups are not completely listed in the GAP 
SmallGrp
library \cite{SmallGrp}. 
Moreover, note that, as detailed in \cref{rem:even_order_filtering}, for groups of order 1536, we had to take some special care during the filtering stage.
\begin{theorem}\label{thmEVENORDER2014}
    \Cref{conj:main} holds for all groups of even order at most $2014$.
\end{theorem}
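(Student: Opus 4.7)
The plan is to extend the same three-stage pipeline (filter, preprocess, exhaustive search) that established \Cref{thm:mainCompSearch} from groups of order at most $1024$ to all groups of even order at most $2014$. Since the theoretical filters are independent of any order bound, the first step is simply to iterate through the
SmallGrp
library entries for each even order $n \le 2014$ and discard the groups covered by \Cref{cor:abelian}, \Cref{cor:evencenter}, \Cref{thm:abelian_by_c2}, \Cref{thm:abelian_by_c3_centerless}, \Cref{thm:nilpotent}, \Cref{cor:SpecialNilpotent2}, \Cref{cor:bound_center}, and \Cref{prop:bound_center4}. Because even-order groups disproportionately have even-order center (every $2$-group does, and every nilpotent even-order group does), we expect the filter to be extremely effective in this range.

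The only order that requires special handling is $n = 1536 = 3 \cdot 2^9$, whose more than $4 \cdot 10^8$ entries are too numerous to iterate over directly in GAP. Here I would invoke the structural observation already used in \Cref{rem:even_order_filtering}: any group of order $1536$ with a normal Sylow $3$-subgroup $P$ has $G/P$ a $2$-group, whose center pulled back to $G$ meets $P$ trivially and therefore gives a non-trivial $2$-element of $Z(G)$; hence \Cref{cor:evencenter} applies without ever enumerating these groups. Thus the filtering for order $1536$ only needs to traverse the sublibrary of groups \emph{without} a normal Sylow $3$-subgroup, which is the small range $408526598$--$408641062$ in the SmallGrp indexing, well within reach.

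For the remaining groups, the second and third stages of the pipeline (the JSON preprocessing computing forbidden elements and required subsets, and the Rust search using the saturating \fillin, \handlethreecolls and counter-based enumeration described in \Cref{sec:experiments}) can be run unchanged, exploiting in particular the even-order optimizations that force $S_3$- and $C_6$-subgroups into any geodetic generating set and handle order-two collisions deterministically rather than via required subsets. As \Cref{fig:running_time} and \Cref{tab:experiments} show, even-order groups are the cheap case: the cost is dominated by a few difficult semidirect products $C_m \rtimes C_n$, and the bounds of \Cref{thm:bound_generating_set} and \Cref{thm:bound_generating_set+center} keep the generating-set enumeration manageable even near $|G| = 2014$.

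The main obstacle is therefore neither theoretical nor algorithmic but purely logistical: demonstrating that the enlarged range actually terminates in reasonable CPU time, and ensuring that the order-$1536$ shortcut is safely justified so that no group is silently skipped. Given the $299$ CPU hours required for the $|G|\le 1024$ computation, doubling the order bound (while restricting to even order) is expected to be within the same ballpark, and the output remains of the same shape: no generating set survives the search except those yielding the complete Cayley graph, which establishes \Cref{conj:main} in the stated range.
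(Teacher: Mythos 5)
Your proposal is essentially the paper's own proof: the result is established by running the same filter--preprocess--search pipeline over all even orders up to $2014$, with the sole special case being order $1536 = 3\cdot 2^9$, handled exactly as in \Cref{rem:even_order_filtering} by discarding the groups with a normal Sylow $3$-subgroup and filtering only the remaining SmallGrp indices. One small correction: your justification that such groups have even-order center is garbled --- the preimage in $G$ of $Z(G/P)$ \emph{contains} $P$ rather than meeting it trivially, and its elements need not be central in $G$. The correct (and easy) argument is that $G = P \rtimes Q$ with $Q$ a Sylow $2$-subgroup, the kernel $K \leq Q$ of the action on $P \cong C_3$ has index at most $2$ in $Q$, so the nontrivial normal subgroup $K$ meets $Z(Q)$ nontrivially, and any nontrivial element of $K \cap Z(Q)$ is a $2$-element of $Z(G)$; with that repaired, the argument matches the paper.
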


In the light of \Cref{cor:commgen_subgroup} and \Cref{thm:nilpotent} it seems reasonable to search for counterexamples to \Cref{conj:main} within classes of groups that are far from commutative.
Thus, it is natural to consider non-abelian finite simple groups.
Nevertheless, in our experiments these groups turned out to be even easier to handle than many of the other groups of even order. (Note that by the famous Feit-Thompson theorem \cite{FeitT} all non-abelian simple groups have even order.)
Indeed, for this special case we could go further than order $2014$ and succeeded to show the following.

\begin{theorem}\label{thm:AdditonalLargeGroups}
    \Cref{conj:main} holds for $S_7$ and for the simple groups $\mathrm{PSL}(2,17)$, $A_7$, $\mathrm{PSL}(2,19)$, and $\mathrm{PSL}(2,16)$.
    In particular, it holds for all simple groups of order at most $5000$.
\end{theorem}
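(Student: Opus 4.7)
The plan is to extend the computer search of Section~\ref{sec:experiments} beyond the order bounds already established in Theorems~\ref{thm:mainCompSearch} and~\ref{thmEVENORDER2014}. By the Feit-Thompson theorem every non-abelian finite simple group has even order, and a standard enumeration shows that the non-abelian finite simple groups of order at most $5000$ are $A_5$, $\mathrm{PSL}(2,7)$, $A_6$, $\mathrm{PSL}(2,8)$, $\mathrm{PSL}(2,11)$, $\mathrm{PSL}(2,13)$, $\mathrm{PSL}(2,17)$, $A_7$, $\mathrm{PSL}(2,19)$, and $\mathrm{PSL}(2,16)$. The first six of these have order at most $1092$ and are therefore already covered by Theorem~\ref{thmEVENORDER2014}. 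Hence only the four simple groups $\mathrm{PSL}(2,17)$, $A_7$, $\mathrm{PSL}(2,19)$, $\mathrm{PSL}(2,16)$, together with $S_7$, require additional treatment to establish the theorem.

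For each of the five remaining groups I would run the search algorithm of Section~\ref{sec:experiments} unchanged, relying on the theoretical pruning developed in Sections~\ref{sec:theoryProofs} and~\ref{sec:bounds}. Each of these groups has trivial center (every non-abelian simple group has trivial center, and $Z(S_n)=1$ for $n\geq 3$), so the generating-set size bound $\abs{\Sigma}< 0.845\sqrt{\abs{G}}$ of Theorem~\ref{thm:bound_generating_set+simplified} applies directly; even for $S_7$ this already forces $\abs{\Sigma}\le 60$. Since all five groups have even order, Lemma~\ref{lem:conjugate_ord_two} contributes a required subset for each conjugacy class of involutions, and for these particular groups the involution classes are large and comparatively few, which sharply restricts the candidate generating sets the enumeration needs to visit. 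Moreover, the outer automorphism structure of the $\mathrm{PSL}(2,q)$ family and of $S_7$ (via conjugation by $S_7$ on generating sets of its simple subgroup $A_7$) allows the automorphism-based orbit reduction carried out during preprocessing to collapse many otherwise equivalent candidate sets into a single representative.

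The main obstacle is the sheer scale of the enumeration for $S_7$, whose order $5040$ leaves a candidate space too large for a single-threaded run to finish within a reasonable time budget. To overcome this I would apply the parallelization scheme of Remark~\ref{rem:split}, splitting each large instance into several subproblems by fixing different combinations of representatives of the initial required subsets, and distribute the resulting subsearches across the machine used for the experiments of Section~\ref{sec:exp_results}. The theorem is established once we verify that none of these subsearches outputs a generating set other than the one producing the complete Cayley graph; the conclusion for each group then follows in exactly the same manner as in the proof of Theorem~\ref{thm:mainCompSearch}.
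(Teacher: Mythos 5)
Your proposal matches the paper's approach essentially exactly: the paper establishes \Cref{thm:AdditonalLargeGroups} by running the same search algorithm (with the pruning of Sections~\ref{sec:theoryProofs} and~\ref{sec:bounds} and the chunked parallelization of \cref{rem:split}) on the five named groups, and the reduction to those five via the Feit--Thompson theorem, the standard list of non-abelian simple groups of order at most $5000$, and \Cref{thmEVENORDER2014} is the same as yours. Your enumeration of the simple groups and the observation that the six smallest are already covered by the even-order search up to $2014$ are both correct.
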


To give an indication which groups are ``easy'' and which are ``difficult'', we give an overview on some selected groups in \Cref{tab:experiments}. The table contains the following classes of groups:
    \begin{itemize}
      \item alternating and symmetric groups,
      \item further simple groups of order exceeding 2014, 
      \item the five groups of order up to 1024 
      with the longest running time,
      \item different groups of order $729 = 3^6$, 
      \item groups with the longest running time among the even groups of order up to $1024$,
      \item groups of even order between 1026 and 2014 with the longest running time.
  \end{itemize}

\begin{table}[h!]
  \centering
  \begin{tabular}{rrrrl}
    \toprule
    \textbf{Group} & \textbf{Order} & \textbf{Index} &\qquad 
    \textbf{Sets tested} & \textbf{Duration} \\
    \midrule
        $A_5$ & $60$ & $5$ & 
        $31$ & $0.3$\,ms \\
        $S_5$ & $120$ & $34$ & 
        $207$ & $1.3$\,ms \\
        $A_6$ & $360$ & $118$ & 
        $6249$ & $23$\,ms \\
        $S_6$ & $720$ & $763$ & 
        $27590$ & $120$\,ms \\
        $A_7$ & $2{,}520$ & --- & 
        $74{,}946{,}283$ & $52$\,min \\
        $S_7$ & $5{,}040$ & --- & 
        $1{,}059{,}510{,}737$ & $197$\,min \\
        \midrule
        $\mathrm{PSL}(2,17)\mathrlap{{}^\Vert}$ & $2{,}448$ & --- &
        $123{,}451{,}769$ & $23$\,min\\
        $\mathrm{PSL}(2,16)\mathrlap{{}^\Vert}$ & $4{,}080$ & --- &
        $4{,}869{,}673{,}337$ & $13$\,h\\
        $\mathrm{PSL}(2,19)\mathrlap{{}^\Vert}$ & $3{,}420$ & --- &
        $2{,}696{,}472{,}513$ & $21$\,h\\
        \midrule
        $C_{109} \rtimes C_9$ & $981$ & $3$ & 
        $5{,}683{,}264{,}056$ & $14$\,h \\
        $(C_7 \rtimes C_3) \times (C_{13} \rtimes C_3)$ & $819$ & $6$ & 
        $8{,}728{,}959{,}134$ & $19$\,h \\
        $C_{89} \rtimes C_{11}$ & $979$ & $1$ & 
        $10{,}178{,}934{,}027$ & $24$\,h \\
        $C_{61} \rtimes C_{15}$ & $915$ & $1$ & 
        $43{,}174{,}839{,}011$ & $87$\,h \\
        $C_{43} \rtimes C_{21}$ & $903$ & $1$ & 
        $43{,}967{,}855{,}355$ & $88$\,h \\
        \midrule
        $C_9^2$ extended by $C_3^2$ & $729$ & $96$ &
        $13{,}525$ & $39$\,ms\\
        $(C_3 \times (C_{27} \rtimes C_3)) \rtimes C_3\mathrlap{{}^\ddagger}$  & $729$ & $90$ & 
        $11{,}276{,}468$ & $208$\,s \\
        $C_{27} \rtimes C_{27} \mathrlap{{}^\ddagger}$ & $729$ & $22$ & 
        $10{,}776{,}997$ & $255$\,s \\
        $(C_9 \rtimes C_9) \rtimes C_9$ & $729$ & $75$ & 
        $97{,}944{,}803$ & $52$\,min\\
        $(C_{27} \rtimes C_9) \rtimes C_3$ & $729$ & $390$ & 
        $175{,}898{,}535$ & $87$\,min \\
        $((C_9 \times C_3) \rtimes C_3) \rtimes C_3^2$ & $729$ & $399$ & 
        $240{,}194{,}985$ & $128$\,min \\
        \midrule
        $C_{31} \rtimes C_{30}$ & $930$ & $1$ & 
        $821{,}601$ & $18$\,s \\
        $C_{17} \times A_5$ & $1{,}020$ & $9$ & 
        $1{,}030{,}890$ & $18$\,s \\
        $C_2^5 \rtimes C_{31}$ & $992$ & $194$ & 
        $3{,}260{,}710$ & $70$\,s \\
        \midrule
        $C_{11} \times \mathrm{PSL}(3,2)$ & $1{,}848$ & $127$ & 
        $1{,}024{,}861{,}064$ & $8$\,h \\
        $C_{37} \rtimes C_{54}$ & $1{,}998$ & $7$ & 
        $1{,}292{,}527{,}452$ & $13$\,h \\
        $C_{2}^4 \rtimes (C_5 \times (C_7 \rtimes C_3))$ & $1{,}680$ & $939$ & 
        $1{,}709{,}925{,}665$ & $14$\,h \\
    \bottomrule
  \end{tabular}
  \smallskip
  \caption{Experiments for selected groups including the five groups with the longest running time.
  The first column shows the group according to the GAP structure description, the second column displays the order of the group, the third column the index in the 
  SmallGrp
  library, 
  the fourth column the number of calls to the \testgeo procedure, and the final column contains the running time. 
  The computation was parallelized for the groups $\mathrm{PSL}(2,q)$ (marked with ${}^\Vert$) according to \cref{rem:split}; the quantities displayed in the last two columns are cumulative. 
  The center of the groups of order $729$ is either of order three or nine (marked with ${}^\ddagger$).
  }\label{tab:experiments}
\end{table}

Note that the order of the groups certainly plays a role in the running time, but there was also a huge variance in running time for different groups of roughly the same order.
For instance this can be seen very prominently for groups of order 729 in \cref{fig:running_time}.

There are various reasons for this difference. 
For example we observe an impact of the size of the center, which is to be expected given the results in \cref{sec:bounds}. 
Moreover, we see the clear effect of the groups having even order: for these we have additional possibilities to generate small required subsets and we can use the improved \fillin procedure.
However, also among groups of odd order and with trivial center there is a huge variation in running time. We observe that the most difficult instances are groups $C_{p} \rtimes C_{n}$ with a faithful action, $p$ prime, and $n$ as large as possible.

Additional statistics on our experiments may be found at

\centerline{\url{https://osf.io/9ay6s/?view_only=37e18301e4e74e12bfe4e07b90b924c0}.}

\section{Discussion}

We have shown that for several infinite classes of finite groups there are no geodetic Cayley graphs except the complete graphs. This includes all abelian groups (except cyclic groups of odd order), dihedral groups, and groups with even-order center, as well as many nilpotent groups. Moreover, we have verified by a computer search that \Cref{conj:main} holds for all groups up to order 1024, all groups of even order up to 2014, all simple groups of order up to $5000$, and the symmetric group~$S_7$.

The main open problem, of course, remains whether \Cref{conj:main} holds for all finite groups, \ie that every geodetic Cayley graph of a finite group is either complete or a cycle of odd length. 
Our experiments suggest that it might be reasonable to aim for proving \Cref{conj:main} for all groups of even order.

\section{Acknowledgments}
The authors sincerely thank Alexey Talambutsa for pointing out previous work, 
Filippo Prandina for highlighting a mistake in \cref{thm:bound_generating_set+simplified} in 
a preliminary version,
and the anonymous reviewers for their careful reading and helpful corrections, comments, and improvements to the writing of this article.
In completing this work, the first two authors were supported by Australian Research Council grant DP210100271 and the fifth author was partially supported by DFG grant WE 6835/1--2.

\bibliographystyle{plainurl}
\bibliography{main}

\end{document}